%
%
%
\documentclass{amsart}
\usepackage{amssymb}
\usepackage[all]{xy}
\input{tableau}

\renewcommand{\upsilon}{\varphi}

\newcommand{\rc}{\gentabbox{1}{1}{0.4pt}{2pt}{0.4pt}{0.4pt}{{}}}
\newcommand{\lc}{\gentabbox{1}{1}{2pt}{0.4pt}{0.4pt}{0.4pt}{{}}}
\newcommand{\bc}{\boldtabbox{1}{1}{{}}}
\newcommand{\row}[1]{\hbox{$\tabstyle #1$}}
\newcommand{\q}{\genblankbox{1}{1}\relax}

\setlength{\tabwidth}{2ex}
\setlength{\tabheight}{2ex}

\newtheorem{thm}{Theorem}[section]
\newtheorem{lem}[thm]{Lemma}
\newtheorem{prop}[thm]{Proposition}
\newtheorem{cor}[thm]{Corollary}
\newtheorem{conj}[thm]{Conjecture}

\theoremstyle{definition}
\newtheorem{defn}[thm]{Definition}

\theoremstyle{remark}

\newtheorem{exam}[thm]{Example}

\numberwithin{equation}{section}

\newcommand{\B}{\mathrm{B}}
\newcommand{\GL}{\mathrm{GL}}
\newcommand{\IC}{\mathrm{IC}}
\newcommand{\Aa}{\mathbb{A}}
\newcommand{\Pp}{\mathbb{P}}

\newcommand{\F}{\mathbb{F}}
\newcommand{\N}{\mathbb{N}}
\newcommand{\Oo}{\mathbb{O}}
\newcommand{\Q}{\mathbb{Q}}
\newcommand{\Z}{\mathbb{Z}}
\newcommand{\cB}{\mathcal{B}}
\newcommand{\cC}{\mathcal{C}}

\newcommand{\cF}{\mathcal{F}}
\newcommand{\cG}{\mathcal{G}}
\newcommand{\cH}{\mathcal{H}}

\newcommand{\cN}{\mathcal{N}}
\newcommand{\cO}{\mathcal{O}}
\newcommand{\cP}{\mathcal{P}}
\newcommand{\cQ}{\mathcal{Q}}

\newcommand{\fN}{\mathfrak{N}}

\newcommand{\tK}{\widetilde{K}}
\newcommand{\Qlb}{\overline{\Q}_\ell}

\newcommand{\bt}{{\mathbf{t}}}
\newcommand{\isomto}{\overset{\sim}{\rightarrow}}

\DeclareMathOperator{\End}{End}
\DeclareMathOperator{\Ind}{Ind}
\DeclareMathOperator{\Irr}{Irr}
\DeclareMathOperator{\Span}{span}
\DeclareMathOperator{\Lie}{Lie}
\DeclareMathOperator{\tr}{tr}

\DeclareMathOperator{\codim}{codim}

\title[Orbit closures in the enhanced nilpotent cone]
{Orbit closures in the enhanced nilpotent cone}
\author{Pramod N. Achar}
\address{Department of Mathematics\\
  Louisiana State University\\
  Baton Rouge, LA 70803\\
  U.S.A.}
\email{pramod@math.lsu.edu}
\author{Anthony Henderson}
\address{School of Mathematics and Statistics\\
  University of Sydney, NSW 2006\\
  Australia}
\email{anthonyh@maths.usyd.edu.au}

\begin{document}

\begin{abstract}
We study the orbits of $G=\GL(V)$ in the enhanced nilpotent cone
$V\times\cN$, where $\cN$ is the variety of nilpotent endomorphisms of $V$.
These orbits are parametrized by bipartitions of $n=\dim V$, and we prove that the
closure ordering corresponds to a natural partial order on bipartitions. 
Moreover, we prove that the
local intersection cohomology of the orbit closures is given by certain
bipartition analogues of Kostka polynomials, defined by Shoji. Finally,
we make a connection with Kato's exotic nilpotent cone in type C,
proving that the closure ordering is the same, and conjecturing that
the intersection cohomology is the same but with degrees doubled.
\end{abstract}

\maketitle

\section{Introduction}
\label{sect:intro}

Many features of the representation theory of an algebraic group are known
to be controlled by the geometry of its nilpotent cone. In particular,
the Springer correspondence, as developed by Borho--MacPherson and Lusztig,
relates the local intersection cohomology of the nilpotent orbit closures
to composition multiplicities in representations of the associated Weyl group
(see the survey article \cite{shoji:survey}). The correspondence in 
types B/C is more complicated than that in type A, in a number of
respects: for instance, Weyl group representations are no longer in
bijection with nilpotent orbits, and the concise algebraic description of
the Weyl group action on the total cohomology of Springer fibres
(see~\cite{dp}, \cite{garsiaprocesi}) is lost.

Recently, S.~Kato has constructed an ``exotic Springer correspondence'' in
type C
(see \cite{kato:springer}) which appears to evade these complications. 
He replaces the
nilpotent cone with the exotic nilpotent cone $\fN=W\times\fN_0$, 
where $W$ is the symplectic vector space
and $\fN_0$ is the variety of self-adjoint nilpotent endomorphisms of $W$.
He had introduced this exotic nilpotent cone
in \cite{kato:exotic}, to generalize the Kazhdan--Lusztig--Ginzburg
geometrization of affine Hecke algebras to the case of unequal parameters.
Thus, questions in the representation theory of the Coxeter group of type B/C,
and of the affine Hecke algebras of type B/C with unequal parameters, are related to
the problem of computing local intersection cohomology of orbit closures in $\fN$.
(Part of this problem, the computation of IC of orbit closures in $\fN_0$,
was done in \cite{hend:ft}.)

As a step towards solving this problem, we consider an analogous
but simpler variety, the \emph{enhanced nilpotent cone} $V\times\cN$, where
$\cN$ is the ordinary nilpotent cone, \textit{i.e.}, the
variety of nilpotent endomorphisms of the vector space $V$.
This enhanced nilpotent cone can be viewed as a subvariety of the exotic nilpotent
cone, of the kind which plays an important role in \cite{kato:exotic}; it is
also important in the theory of
mirabolic character sheaves being developed by Finkelberg, Ginzburg and Travkin
(see \cite{fg}, \cite{travkin}, \cite{fgt}).
The group acting on it
is merely $G=\GL(V)$, so the geometry has the flavour of type A,
whereas the combinatorics arising is of type B/C, in accordance with Kato's picture.
The great advantage of the enhanced nilpotent cone over the exotic is that there is
a standard way to construct resolutions of singularities of the orbit closures,
which turn out to be semismall; these allow us to determine the closure
ordering and the local intersection cohomology.

On the ordinary (type A) nilpotent cone, the combinatorics of the closure order is of course well known, as is Lusztig's identification of the local intersection cohomology with Kostka polynomials (see \cite{lusztig:greenpolys}).  
These results motivate and guide the developments of the present paper.  The ``mirabole'' of this story is that the $G$-action
on pairs of a vector and a nilpotent endomorphism is so similar to that 
on nilpotent endomorphisms \textit{tout court}. 

Here are the principal results of the paper in detail.
\begin{itemize}
\item[\S2.] \textbf{Parametrization of Orbits.} We show that $G$-orbits in
$V\times\cN$ are parametrized by the bipartitions $(\mu;\nu)$ of $n$, where
$n=\dim V$ (Proposition \ref{prop:param}). 
We also show that the orbit $\cO_{\mu;\nu}$ has dimension
$n^2-b(\mu;\nu)$, and that its point stabilizers are connected
(Proposition \ref{prop:dim}). 
The same parametrization of orbits was independently obtained in~\cite{travkin}.  The finiteness of the number of orbits has
been known since~\cite{bern} (see also~\cite[2.1]{ganginzburg}), and
\cite{kato:springer} proved analogous
results for the exotic nilpotent cone.
\item[\S3.] \textbf{Orbit Closures.} We construct resolutions of singularities
of the orbit closures $\overline{\cO_{\mu;\nu}}$ (Proposition \ref{prop:resolution}),
and use them to show (Theorem \ref{thm:closure})
that the closure ordering corresponds to a natural partial order on bipartitions
(Definition \ref{defn:ineq}), which appeared previously in \cite{shoji:limit}.
\item[\S4.] \textbf{Fibres of the Resolutions of Singularities.}
We show that the resolutions of singularities are semismall, and deduce that
the local intersection cohomology can be determined from the cohomology
of the fibres of the resolutions (Theorem \ref{thm:semismall}).
We then show that these fibres can be paved by affine spaces 
(Theorem \ref{thm:paving}), which implies the vanishing of odd-degree cohomology.
\item[\S5.] \textbf{Intersection Cohomology and Kostka Polynomials.}
In the main result of the paper (Theorem \ref{thm:main}), we prove that
for $(v,x)\in\cO_{\rho;\sigma}$,
\begin{equation*} \label{eqn:intro}
t^{b(\mu;\nu)}\sum_i \dim \cH^{2i}_{(v,x)}\IC(\overline{\cO_{\mu;\nu}})\,t^{2i}
=\tK_{(\mu;\nu),(\rho;\sigma)}(t),
\end{equation*}
where the right-hand side is a type-B Kostka polynomial which was
defined by Shoji in \cite{shoji:green}.
\item[\S6.] \textbf{Connections with Kato's Exotic Nilpotent Cone.}
After recalling Kato's result that the orbits in the exotic nilpotent
cone are parametrized by bipartitions, we prove that the closure ordering
in the exotic nilpotent cone is the same as for the enhanced nilpotent cone
(Theorem \ref{thm:moby-dick}), and we conjecture (Conjecture \ref{conj:doubling})
that the local IC is also the same but with degrees doubled (this is the
relationship which is known to hold between $\fN_0$ and $\cN$). We also explain
that this conjecture may be equivalent to one made by Shoji in \cite{shoji:limit}.
\end{itemize} 

\textbf{Acknowledgements.} We are very grateful to S.~Kato, and to
M.~Finkelberg, V.~Ginzburg, and R.~Travkin, for generously keeping us informed of their
work as ours progressed. We are also indebted to T.~Shoji for pointing out
the connection with his paper \cite{shoji:limit}.

\section{Parametrization of Orbits}
\label{sect:param}

The following notation will be in force throughout the paper:
\[
\begin{split}
\F&\text{ is an algebraically closed field,}\\
V&\text{ is an $n$-dimensional vector space over }\F,\\
G&=\GL(V),\text{ and}\\
\cN&=\{x\in\End(V)\,|\,x\text{ is nilpotent}\}.
\end{split}
\]
Given $x\in\cN$, we regard $V$ as an $\F[x]$-module in the obvious way,
where $\F[x]$ is the subalgebra of $\End(V)$ generated by $x$. 
All complexes of sheaves will be $G$-equivariant constructible complexes of $\Qlb$-sheaves, where $\ell$ is a fixed prime not equal to the
characteristic of $\F$.

Our conventions for partition combinatorics follow \cite{macdonald}
in most respects.
A \emph{partition} is a nonincreasing
sequence of nonnegative integers $\lambda = (\lambda_1,\lambda_2,
\cdots)$ with finitely many nonzero terms.  Its \emph{size}, denoted
$|\lambda|$, is the sum of its terms: $\sum_i \lambda_i$.  Its \emph{length},
denoted $\ell(\lambda)$, is the number of nonzero terms. The \emph{transpose
partition} $\lambda^\bt$ is defined by $\lambda_i^\bt=|\{j\,|\,\lambda_j\geq i\}|$.
The set of all
partitions of size $n$ is denoted $\cP_n$; this is a poset under the
dominance order $\leq$, defined so that $\lambda\leq\mu$ is equivalent to
\[
\lambda_1+\lambda_2+\cdots+\lambda_k\leq\mu_1+\mu_2+\cdots+\mu_k,
\text{ for all $k\geq 1$.}
\]
(Note that we never relate partitions of different size in this way, so
$\lambda\leq\mu$ entails $|\lambda|=|\mu|$.)
Addition of partitions is defined termwise: to say that $\lambda = \mu +
\nu$ is to say that $\lambda_i = \mu_i + \nu_i$ for each $i$. Finally,
given a partition $\lambda$, we define
\[
n(\lambda) = \sum_i (i-1) \lambda_i = \sum_i \binom{\lambda_i^\bt}{2}.
\]
This function is clearly additive: $n(\mu + \nu) = n(\mu) + n(\nu)$.  

It is well known that $G$-orbits in $\cN$ are in bijection with $\cP_n$,
via the Jordan normal form.  Explicitly, the orbit $\cO_\lambda$
corresponding to $\lambda \in \cP_n$ consists of all $x \in \cN$ for which
there exists a basis
\[
\{v_{ij} \mid \text{$1 \le i \le \ell(\lambda)$ and $1 \le j \le
\lambda_i$} \}
\quad
\text{for $V$ such that}
\quad
xv_{ij} =
\begin{cases}
v_{i,j-1} & \text{if $j > 1$,} \\
0 & \text{if $j = 1$.}
\end{cases}
\]
We will refer to such a basis $\{v_{ij}\}$ as a \emph{Jordan basis} for $x$,
and to $\lambda$ as the \emph{Jordan type} of $x$; this terminology applies
when $x$ is a nilpotent endomorphism of any finite-dimensional vector space, 
not necessarily our chosen vector space $V$.

To classify the $G$-orbits in $V\times\cN$ we introduce some analogous definitions.

\begin{defn}
A \emph{bipartition} is an ordered pair of partitions, written $(\mu;\nu)$.
The set of bipartitions $(\mu;\nu)$ with $|\mu| + |\nu| = n$ is denoted $\cQ_n$.
Following~\cite[\S 5.5.3]{gp}, for any $(\mu;\nu)\in\cQ_n$, we define
\[
b(\mu;\nu)=2n(\mu)+2n(\nu)+|\nu|.
\]
\end{defn}

\begin{defn}
Let $(v,x)\in V\times\cN$, and let $\lambda$ be the Jordan type of $x$.
A \emph{normal basis} for $(v,x)$ is a Jordan basis $\{v_{ij}\}$ for $x$
such that
\[
v = \sum_{i = 1}^{\ell(\mu)} v_{i,\mu_i},
\]
where $\mu$ is a partition such that $\nu_i=\lambda_i-\mu_i$ defines
a partition $\nu$. The bipartition
$(\mu;\nu)\in\cQ_n$ is the \emph{type} of the normal basis.
\end{defn}

The following result (which holds over non-algebraically closed fields
as well) was independently proved by Travkin (\cite[Theorem 1]{travkin}). 

\begin{prop}\label{prop:param}
The set of $G$-orbits in $V \times \cN$ is in one-to-one correspondence
with $\cQ_n$.  The orbit corresponding to $(\mu;\nu)$, denoted
$\cO_{\mu;\nu}$, consists of pairs $(v,x)$ for which there exists a normal basis
of type $(\mu;\nu)$. 
\end{prop}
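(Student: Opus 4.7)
The plan is to prove the correspondence in three stages: (a) if $(v,x)$ and $(v',x')$ both admit normal bases of type $(\mu;\nu)$, they lie in the same $G$-orbit; (b) every pair $(v,x)$ admits a normal basis; and (c) the type of a normal basis is a $G$-invariant. Statement (a) is essentially immediate: the linear map $g \in G$ defined by $g(v_{ij}) = v_{ij}'$ carries Jordan bases to Jordan bases, so $gxg^{-1} = x'$, and it sends $v = \sum_i v_{i,\mu_i}$ to $v' = \sum_i v'_{i,\mu_i}$.

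The heart of the argument is (b). I would proceed by induction on $n = \dim V$. The key invariant is $\mu_1 = \dim \F[x]v$: a direct computation in any normal basis shows that $x^k v \neq 0$ iff $k < \mu_1$, so this dimension agrees with the first part of $\mu$ in every normal basis, and hence forces $\nu_1 = \lambda_1 - \mu_1$. The inductive step is then to exhibit a cyclic $\F[x]$-submodule $U \subset V$ of size $\lambda_1$, with an $\F[x]$-module complement $U'$, such that the $U$-component of $v$ has the form $x^{\nu_1} w$ for some generator $w$ of $U$. Once this is done, one applies the inductive hypothesis to $(v - x^{\nu_1} w, x|_{U'})$ on $U'$ to obtain $(\mu_2, \mu_3, \ldots)$ and $(\nu_2, \nu_3, \ldots)$, and checks that the assembled $\mu$ and $\nu$ are both nonincreasing. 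For (c), an invariant description of $(\mu;\nu)$ suffices: the pair $\lambda$ and $\mu_1$ are invariants as above, and the higher parts can be extracted either recursively, by iterating the construction of (b), or directly, from dimensions of subspaces built from $\F[x]v$, $\ker x^r$, and $\mathrm{im}(x^s)$.

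The principal obstacle lies in (b), specifically in the choice of $U$: the naive candidate (a maximal-length block from some fixed Jordan decomposition of $x$) can fail, because $v$ may have no component of the required form in such a block --- for instance, when $v$ coincides with the bottom of a strictly smaller Jordan block of $x$, and thus fails to lie in $\mathrm{im}(x^{\lambda_1 - \mu_1})$. The remedy is to exploit the freedom in the centralizer $Z_G(x)$ to perturb the decomposition: one selects a generator $w$ of a length-$\lambda_1$ cyclic submodule whose image $x^{\lambda_1-1}w$ aligns with $x^{\mu_1-1}v$ modulo images of other blocks, and arranges the complement $U'$ to absorb the remainder of $v$. Checking that this choice is always possible, and that the resulting $(\mu_2,\ldots)$ and $(\nu_2,\ldots)$ remain bounded by $\mu_1$ and $\nu_1$ respectively, is the delicate part of the induction.
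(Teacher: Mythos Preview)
Your three-part structure is sound, and (a) is immediate as you say. But there is a genuine gap in (b): your condition on $(U,U')$ --- that the $U$-component of $v$ equal $x^{\nu_1}w$ for some generator $w$ of $U$ --- does not suffice to make $(\nu_i)$ nonincreasing. Take $\lambda=(2,2)$ with Jordan basis $\{v_{11},v_{12},v_{21},v_{22}\}$ and $v=v_{12}$, so $\mu_1=2$, $\nu_1=0$. The choice $U=\F[x]v$ (with any complement $U'$) satisfies your condition, since the $U$-component of $v$ is $v$ itself, a generator; it also satisfies your alignment hint, since $x^{\lambda_1-1}w = xv_{12} = v_{11} = x^{\mu_1-1}v$ on the nose. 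Yet it gives $v'=0$, hence $\mu_2=0$ and $\nu_2=\lambda_2=2>\nu_1$. The correct type here is $((2^2);\varnothing)$, which requires instead something like $U=\F[x](v_{12}+v_{22})$. The obstruction you anticipate (when $v$ sits in a short block and misses $\mathrm{im}(x^{\lambda_1-\mu_1})$) is a different failure mode; this one occurs even when $\mu_1=\lambda_1$. So neither your stated condition nor your hint singles out a workable choice of $(U,U')$, and you have not shown one always exists.

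Step (c) inherits this problem: your recursive suggestion depends on the unresolved choices in (b), and the direct suggestion is not spelled out. The paper handles both steps by a different route. For existence (Lemma~\ref{lem:jordan}) it does not peel off a top block; instead it starts from an arbitrary Jordan basis, reads off candidate $\mu_i$'s from the position of $v$ in each block, and then explicitly modifies pairs of adjacent blocks to repair any violation of $\mu_i\ge\mu_{i+1}$ or $\nu_i\ge\nu_{i+1}$. For uniqueness it produces a single clean invariant (Lemma~\ref{lem:jordan-dist}): the Jordan type of the induced map $x|_{V/\F[x]v}$ is precisely $(\nu_1+\mu_2,\,\nu_2+\mu_3,\,\ldots)$, which together with $\lambda=\mu+\nu$ and $\mu_1=\dim\F[x]v$ pins down $(\mu;\nu)$. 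This invariant is the key idea missing from your sketch of (c).
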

\begin{proof}
This proposition follows from the next two lemmas.  Lemma~\ref{lem:jordan}
states that for any pair $(v,x)$, a normal basis exists. It is obvious that
for any $(\mu;\nu)\in\cQ_n$, there exists a pair possessing a normal basis
of that type, and any two such pairs are in the same
$G$-orbit. To complete the proof, we must show that the type of the normal basis
is determined uniquely by $(v,x)$. But the partition $\lambda=\mu+\nu$
is determined as the Jordan type of $x$, and Lemma~\ref{lem:jordan-dist} shows 
that the partition $(\nu_1+\mu_2,\nu_2+\mu_3,\cdots)$ of size $n-\mu_1$
is also determined. Knowing these two partitions, one can successively 
determine $\mu_1,\nu_1,\mu_2,\nu_2$, and so forth.
\end{proof}

\begin{lem}\label{lem:jordan}
For any $(v,x)\in V\times\cN$, there exists a normal basis for
$(v,x)$ of some type $(\mu;\nu)\in\cQ_n$.
\end{lem}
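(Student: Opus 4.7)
The plan is to proceed by induction on $n = \dim V$. The case $v = 0$ (including $n = 0$) is immediate: any Jordan basis for $x$ is a normal basis with $\mu = \emptyset$ and $\nu = \lambda$, where $\lambda$ is the Jordan type of $x$.

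For $v \neq 0$, I would apply the induction hypothesis to the pair $(xv, x|_{xV})$ on $xV$, noting that $\dim xV = n - \ell(\lambda) < n$. This produces a normal basis $\{v'_{i,j}\}$ of $xV$ for the nilpotent $x|_{xV}$ such that $xv = \sum_{i=1}^{\ell(\mu')} v'_{i, \mu'_i}$, where $(\mu'; \nu')$ is a bipartition with $\mu' + \nu' = \lambda'$ and $\lambda'_i = \lambda_i - 1$ for each $i$ with $\lambda_i \geq 2$. I would extend this to a Jordan basis $\{v_{i,j}\}$ of $V$ by choosing, for each $i$ with $\lambda_i \geq 2$, a lift $v_{i, \lambda_i} \in V$ satisfying $xv_{i, \lambda_i} = v'_{i, \lambda_i - 1}$, and picking $v_{i, 1} \in \ker x$ for each $i$ with $\lambda_i = 1$ to complete the basis. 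Setting
\[
w = v - \sum_{i=1}^{\ell(\mu')} v_{i, \mu'_i + 1},
\]
a direct computation gives $xw = 0$, so $w \in \ker x$.

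The central step is then to arrange, by adjusting both the normal basis on $xV$ and the lifts $v_{i, \lambda_i}$, that $w$ takes the form $v_{\ell(\mu')+1, 1} + v_{\ell(\mu')+2, 1} + \cdots + v_{s, 1}$ for some $s \geq \ell(\mu')$ (with $s = \ell(\mu')$ meaning $w = 0$). If so, then setting $\mu_i = \mu'_i + 1$ for $i \leq \ell(\mu')$, $\mu_i = 1$ for $\ell(\mu') < i \leq s$, and $\mu_i = 0$ otherwise yields $v = \sum_{i=1}^{s} v_{i, \mu_i}$, a normal basis. The key tool is the centralizer $Z_G(x)$: its restriction to $\ker x$ preserves the flag $\ker x \supset \ker x \cap xV \supset \ker x \cap x^2 V \supset \cdots$ and factors through the parabolic subgroup of $\GL(\ker x)$ stabilizing this flag, with Levi acting as a full $\GL$ on each successive quotient, and hence it acts transitively on elements at each fixed flag position. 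The index $s$ is given by $s = \lambda^\bt_{\ell+1}$ where $\ell$ is the level of $w$ (the largest $\ell$ with $w \in x^\ell V$), and one verifies $\lambda_s > \lambda_{s+1}$, which is exactly the condition needed for $\nu = \lambda - \mu$ to be a partition at the $(s, s+1)$ boundary.

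The hard part is verifying that the centralizer and lift-modification actions can produce the target $w$ while preserving the normal form of $xv$ already fixed on $xV$; equivalently, that the combined freedom of choosing the normal basis on $xV$ within the stabilizer of $xv$ under $Z_{\GL(xV)}(x|_{xV})$ and modifying lifts by $\ker x$-elements is rich enough to realize any prescribed element of $\ker x$ at the required flag position. One must also check the partition condition $\nu_{\ell(\mu')} \geq \nu_{\ell(\mu')+1}$ at the boundary, which via $\nu_i = \nu'_i$ for $i \leq \ell(\mu')$ reduces to $\nu'_{\ell(\mu')} \geq \lambda_{\ell(\mu')+1} - \mu_{\ell(\mu')+1}$; this will follow from the partition property of $\nu'$ (inductive hypothesis) together with the choice of $s$ dictated by $w$'s level.
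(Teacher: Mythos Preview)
Your inductive scheme via $(xv, x|_{xV})$ is a natural idea, but there is a genuine gap in how you determine $s$. Consider $\lambda=(3,2,1)$ and $v=v_{12}$ in a standard Jordan basis. Then $xv=v_{11}$, and on $xV$ (Jordan type $(2,1)$) the obvious normal basis $v'_{11}=v_{11},\,v'_{12}=v_{12},\,v'_{21}=v_{21}$ gives $(\mu';\nu')=((1);(1,1))$. With the obvious lifts one finds $w=v-v^{\text{new}}_{1,2}=v_{12}-v_{12}=0$, so by your rule $s=\ell(\mu')=1$, hence $\mu=(2)$ and $\nu=(\lambda_1-2,\lambda_2,\lambda_3)=(1,2,1)$, which is not a partition. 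The correct type here is $((2,1);(1,1,1))$, requiring $s=2$; that value can indeed be reached by first deforming the normal basis on $xV$ (e.g.\ taking $v'_{12}=v_{12}-v_{21}$), but nothing in your procedure tells you to do so or when. The crux is that $w$, and hence its ``level'', is not an invariant of $(v,x)$: it moves under exactly the adjustments you permit, so the formula $s=\lambda^\bt_{\ell+1}$ is circular. Your centralizer observation (transitivity at a fixed flag position in $\ker x$) lets you normalize $w$ \emph{within} a level, but does not select the level, and your sentence ``this will follow from the partition property of $\nu'$ together with the choice of $s$'' is precisely where the argument is missing.

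The paper's proof avoids this difficulty by a different induction: it first normalizes within each Jordan block so that $v=\sum_i v_{i,\mu_i}$ for some sequence $(\mu_i)$ (not yet a partition), then inducts on the number of Jordan blocks $\ell(\lambda)$, giving explicit two-block basis modifications that repair the inequalities $\mu_1<\mu_2$ or $\nu_1<\nu_2$ one step at a time. Your approach could likely be salvaged, but it needs an \emph{a priori} intrinsic description of $s=\ell(\mu)$ (for instance via $E^x v$ or via the minimal level achievable for $w$ under all allowed adjustments) and a genuine proof that the resulting $\nu$ is nonincreasing at the boundary index; neither is supplied.
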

\begin{proof}
Let $\lambda$ be the Jordan type of $x$, and let $\{v_{ij}\}$ be a Jordan basis
for $x$. Write $v=\sum_{i,j} c_{ij} v_{ij}$. For $1\leq i\leq \ell(\lambda)$,
let $\mu_i\in\{0,1,\cdots,\lambda_i\}$ be minimal such that 
$c_{ij}=0$ whenever $\mu_i<j\leq\lambda_i$, and set $\nu_i=\lambda_i-\mu_i$. 
If $\mu_i\neq 0$, we change the basis of the $i$th Jordan block
as follows. Define
\[
v_{i,\lambda_i}'=\sum_{j=1}^{\mu_i} c_{ij} v_{i,j+\nu_i}
\quad
\text{and}
\quad
v_{ij}'=x^{\lambda_i-j}v_{i,\lambda_i}'
\text{ for $1\leq j\leq \lambda_i-1$,}
\]
and then redefine $v_{ij}$ to be $v_{ij}'$. This gives a new
Jordan basis for $x$ with the property that
\[
v=\sum_{\substack{1\leq i\leq\ell(\lambda)\\\mu_i\neq 0}} v_{i,\mu_i}.
\]
If $(\mu_1,\mu_2,\cdots)$ and $(\nu_1,\nu_2,\cdots)$ are partitions, we are
finished. If they are not, we must adjust our basis in an appropriate way.
Arguing by induction on $\ell(\lambda)$, we can assume that $\mu_2\geq\mu_3\geq\cdots$
and $\nu_2\geq\nu_3\geq\cdots$ hold, so the only possible problems are
that $\mu_1<\mu_2$ or that $\nu_1<\nu_2$. Since $\lambda_1\geq\lambda_2$,
these cases are mutually exclusive.

If $\mu_1<\mu_2$, we move the second Jordan block by redefining
$v_{21},v_{22},\cdots,v_{2,\lambda_2}$
to be
\[
v_{21}-v_{11},v_{22}-v_{12},\cdots,
v_{2,\lambda_2}-v_{1,\lambda_2}.
\]
After this change, we still have a Jordan basis for $x$, but the
component of $v$ in the first Jordan block is now $v_{1,\mu_1}+v_{1,\mu_2}$
(or $v_{1,\mu_2}$, if $\mu_1=0$).
Changing the basis of the first Jordan block as above, we can
make this component $v_{1,\mu_2}$. So we have effectively redefined
$\mu_1$ to equal $\mu_2$ and $\nu_1$ to equal $\lambda_1-\mu_2$, 
and thus removed the problem (without making
$\nu_1<\nu_2$).

If $\nu_1<\nu_2$, we move the first Jordan block by redefining
$v_{11},\cdots,v_{1,\lambda_1}$ to be
\[
\begin{split}
v_{11},\cdots,v_{1,\lambda_1-\lambda_2},
v_{1,\lambda_1-\lambda_2+1}-v_{21},
&v_{1,\lambda_1-\lambda_2+2}-v_{22},
\cdots,
\\
&
v_{1,\mu_1}-v_{2,\lambda_2-\nu_1},
\cdots,
v_{1,\lambda_1}-v_{2,\lambda_2}.
\end{split}
\]
After this change, the component of $v$ in the second Jordan block
is $v_{2,\mu_2}+v_{2,\lambda_2-\nu_1}$ (or $v_{2,\lambda_2-\nu_1}$, if $\mu_2=0$).
Changing the basis of the second Jordan block as above, we can make this
component $v_{2,\lambda_2-\nu_1}$. So we have effectively redefined
$\mu_2$ to equal $\lambda_2-\nu_1$ and $\nu_2$ to equal $\nu_1$. The inequalities 
$\mu_1\geq\mu_2\geq\cdots$ remain true, but it
is possible that we now have $\nu_2<\nu_3$; if so, we repeat this procedure
with the second and third Jordan blocks, and continue until we arrive at
the desired result.
\end{proof}

\begin{lem}\label{lem:jordan-dist}
Suppose $\{v_{ij}\}$ is a normal basis for $(v,x)\in V\times\cN$ of type
$(\mu;\nu)\in\cQ_n$. For $1\leq i\leq\ell(\mu+\nu)$ and $1\leq j\leq (\mu+\nu)_i$,
define $w_{ij}=\sum_{k=1}^i v_{k,j-\mu_i+\mu_k}$. 
\begin{enumerate}
\item $\{w_{ij}\,|\,1\leq j\leq \mu_i-\mu_{i+1}\}$ 
is a basis for the $\F[x]$-submodule $\F[x]v$ of $V$,
and therefore $\dim \F[x]v=\mu_1$.
\item $\{w_{i,j+\mu_i-\mu_{i+1}}+\F[x]v\,|\,1\leq j\leq\nu_i+\mu_{i+1}\}$ 
is a Jordan basis for the
induced endomorphism $x|_{V/\F[x]v}$, whose Jordan type is
$(\nu_1+\mu_2,\nu_2+\mu_3,\cdots)$. 
\end{enumerate}
\end{lem}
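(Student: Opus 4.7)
The plan is to prove both parts together by first checking that the collection $\{w_{ij}\}$ is a basis of $V$, then computing $xw_{ij}$ uniformly, and finally matching the vectors $x^m v$ with specific $w$'s.

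First I would verify that the full collection $\{w_{ij}\mid 1\leq i\leq\ell(\mu+\nu),\ 1\leq j\leq\lambda_i\}$ (where $\lambda=\mu+\nu$) is a basis of $V$: by definition, $w_{ij}=v_{ij}+\sum_{k<i}v_{k,j-\mu_i+\mu_k}$, so the transition matrix from $\{v_{ij}\}$ is unitriangular in the lexicographic order on $(i,j)$. Since there are $|\lambda|=n$ vectors of each kind, $\{w_{ij}\}$ is a basis. Next, a direct computation from the definition and the relation $xv_{k,l}=v_{k,l-1}$ gives the uniform formula
\[
xw_{ij}=\sum_{k=1}^{i}v_{k,j-\mu_i+\mu_k-1}=w_{i,j-1}\quad\text{for all }j\geq 1,
\]
where we extend the definition by $w_{i,0}=\sum_{k=1}^{i}v_{k,\mu_k-\mu_i}$ (with the usual convention that $v_{k,l}=0$ for $l\leq 0$). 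I would observe that this extension is not ad hoc: the terms surviving in $w_{i,0}$ are exactly those with $\mu_k>\mu_i$, hence $w_{i,0}=\sum_{k:\mu_k>\mu_i}v_{k,\mu_k-\mu_i}=x^{\mu_i}v$.

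For part (1), starting from $v=\sum_{k}v_{k,\mu_k}$ and iterating, one obtains $x^m v=\sum_{k:\mu_k>m}v_{k,\mu_k-m}$. For each $i$ with $\mu_i>\mu_{i+1}$ and each $m$ in the range $\mu_{i+1}\leq m\leq \mu_i-1$, the condition $\mu_k>m$ is equivalent to $k\leq i$, so $x^m v=w_{i,\mu_i-m}$. As $m$ runs over $\{0,1,\ldots,\mu_1-1\}$, the resulting indices $(i,\mu_i-m)$ run bijectively over $\{(i,j)\mid 1\leq j\leq \mu_i-\mu_{i+1}\}$. Since $x^{\mu_1}v=0$ and the vectors $v,xv,\ldots,x^{\mu_1-1}v$ have distinct leading terms in the $\{v_{ij}\}$-basis, they are linearly independent and form a basis of $\F[x]v$. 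This proves part (1) and gives $\dim\F[x]v=\mu_1$.

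For part (2), the complement of the part (1) index set inside $\{(i,j)\mid 1\leq j\leq\lambda_i\}$ is exactly $\{(i,j+\mu_i-\mu_{i+1})\mid 1\leq j\leq\nu_i+\mu_{i+1}\}$, because $\lambda_i-(\mu_i-\mu_{i+1})=\nu_i+\mu_{i+1}$. Together with the basis assertion from the first step, this shows that the corresponding classes $\tilde w_{i,j+\mu_i-\mu_{i+1}}$ form a basis of $V/\F[x]v$. The uniform formula $xw_{ij}=w_{i,j-1}$ then descends to $x\tilde w_{i,J}=\tilde w_{i,J-1}$, and when $J=1+\mu_i-\mu_{i+1}$ the image is $\tilde w_{i,\mu_i-\mu_{i+1}}=0$ by part (1); when $J=1$ and $\mu_i=\mu_{i+1}$ the image is $\tilde w_{i,0}$, which is zero because $w_{i,0}=x^{\mu_i}v\in\F[x]v$ by the computation above. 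Thus each $i$ contributes an honest Jordan chain of length $\nu_i+\mu_{i+1}$ to $x|_{V/\F[x]v}$, giving the claimed Jordan type.

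The only real obstacle is the index bookkeeping: verifying that the extension $w_{i,0}$ always lies in $\F[x]v$ (so that the Jordan chains on the quotient close off), and confirming that as $m$ increases the index $i$ in $w_{i,\mu_i-m}$ automatically increases through the appropriate ranges determined by the jumps of $\mu$. Once the single identity $xw_{ij}=w_{i,j-1}$ is established, both parts follow with very little additional work.
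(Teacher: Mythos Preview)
Your proof is correct and follows essentially the same approach as the paper's: both arguments show that $\{w_{ij}\}$ is a basis of $V$ (the paper via the shifted-array picture, you via unitriangularity), compute the action of $x$ on the $w_{ij}$, and identify each $x^m v$ with a specific $w_{ij}$. Your uniform identity $xw_{ij}=w_{i,j-1}$ with the extended convention $w_{i,0}=x^{\mu_i}v$ is just a repackaging of the paper's two-case formula $xw_{ij}=w_{i,j-1}$ (for $j\geq\mu_i-\mu_{i+1}+2$) and $xw_{ij}=x^{\mu_i-j+1}v$ (for $j\leq\mu_i-\mu_{i+1}+1$), since in the overlap range $w_{i,j-1}$ is precisely $x^{\mu_i-j+1}v$ by your part~(1) identification.
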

\begin{proof}
If the basis elements $\{v_{ij}\}$ are drawn in 
a shifted array, where the $i$th row is shifted to the left by $\mu_i$ places,
then $w_{ij}$ is the sum of $v_{ij}$ and all basis elements directly above it.
Hence $\{w_{ij}\,|\,1\leq i\leq\ell(\mu+\nu),1\leq j\leq (\mu+\nu)_i\}$
is another basis of $V$.
For example:
\[
\begin{split}
\mu&=(32^2)\\
\nu&=(21^4)
\end{split}
\qquad
\begin{array}{ccccc}
v_{11}&v_{12}&v_{13}&v_{14}&v_{15}\\
&v_{21}&v_{22}&v_{23}&\\
&v_{31}&v_{32}&v_{33}&\\
&&&v_{41}&\\
&&&v_{51}&\\
\uparrow&\uparrow&\uparrow&&\\
x^2 v&xv&v&&
\end{array}
\]
By definition, we have $v=w_{\ell(\mu),\mu_{\ell(\mu)}}$, the sum of the $0$th
column of the shifted array. Moreover, $x$ takes each basis vector $v_{ij}$ to the
one to the left of it, or to $0$ if $j=1$. Hence 
$x^sv=w_{\mu_{s+1}^\bt,\mu_{\mu_{s+1}^\bt}-s}$ is the sum of the $(-s)$th column 
of the shifted array, for $1\leq s<\mu_1$, and $x^{\mu_1}v=0$. Part (1) follows.
It is also easy to see that
\[ xw_{ij}=\begin{cases}
w_{i,j-1} & \text{if $j \geq \mu_i-\mu_{i+1}+2$,} \\
x^{\mu_i-j+1}v & \text{if $j\leq \mu_i-\mu_{i+1}+1$,}
\end{cases} \]
from which part (2) follows.
\end{proof}

We note some easy facts about this parametrization of $G$-orbits in $V\times\cN$.
\begin{lem} \label{lem:easy}
For any $\lambda\in\cP_n$, the following hold.
\begin{enumerate}
\item The union of the orbits $\cO_{\mu;\nu}$ where $\mu+\nu=\lambda$ is
precisely $V\times\cO_\lambda$.
\item The orbit $\cO_{\varnothing;\lambda}$ is precisely $\{0\}\times\cO_\lambda$.
\item The orbit $\cO_{\lambda;\varnothing}$ consists of all
$(v,x)$ where $x\in\cO_\lambda$ and $v\in V\setminus\ker(x^{\lambda_1-1})$.
\end{enumerate}
\end{lem}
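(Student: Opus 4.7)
The plan is to derive all three statements directly from the definition of normal basis, Proposition~\ref{prop:param}, and the structural information provided by Lemma~\ref{lem:jordan-dist}. None of the three parts should require anything substantially new.

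For part~(1), the first step is to note that whenever $(v,x)\in\cO_{\mu;\nu}$, the underlying Jordan basis exhibits $x$ as having Jordan type $\mu+\nu$, so $\cO_{\mu;\nu}\subseteq V\times\cO_{\mu+\nu}$. Conversely, given $(v,x)\in V\times\cO_\lambda$, Lemma~\ref{lem:jordan} produces a normal basis of some type $(\mu;\nu)$, and the fact that the Jordan type of $x$ is $\lambda$ forces $\mu+\nu=\lambda$. The two inclusions together give the desired equality.

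Part~(2) then follows almost immediately: when $\mu=\varnothing$ the defining sum $v=\sum_{i=1}^{\ell(\mu)}v_{i,\mu_i}$ is empty, hence $v=0$, which gives $\cO_{\varnothing;\lambda}\subseteq\{0\}\times\cO_\lambda$. For the reverse inclusion I would observe that any other orbit $\cO_{\mu;\nu}$ with $\mu+\nu=\lambda$ and $\mu\neq\varnothing$ produces $v$ as a nonempty sum of distinct (and hence linearly independent) Jordan basis vectors, so $v\neq 0$; combined with part~(1), this forces any pair $(0,x)\in\{0\}\times\cO_\lambda$ to lie in $\cO_{\varnothing;\lambda}$.

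For part~(3), the forward direction is a direct calculation from the normal basis description: with $\mu=\lambda$ and $\nu=\varnothing$, we have $v=\sum_{i=1}^{\ell(\lambda)}v_{i,\lambda_i}$, so $x^{\lambda_1-1}v=\sum_{i:\lambda_i=\lambda_1}v_{i,1}\neq 0$. For the converse, given $(v,x)$ with $x\in\cO_\lambda$ and $x^{\lambda_1-1}v\neq 0$, part~(1) places $(v,x)$ in some $\cO_{\mu;\nu}$ with $\mu+\nu=\lambda$. By Lemma~\ref{lem:jordan-dist}(1), $\dim\F[x]v=\mu_1$; since $x$ is nilpotent this equals the smallest $k$ with $x^kv=0$, so the hypothesis yields $\mu_1\geq\lambda_1$. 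Combined with $\mu_1\leq(\mu+\nu)_1=\lambda_1$, this forces $\mu_1=\lambda_1$ and hence $\nu=\varnothing$, placing $(v,x)$ in $\cO_{\lambda;\varnothing}$. I do not anticipate any serious obstacle; the only point requiring any care is the converse in part~(3), where one must correctly identify $\dim\F[x]v$ with the nilpotency order of $x$ on $v$ in order to apply Lemma~\ref{lem:jordan-dist}(1).
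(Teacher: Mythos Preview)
Your proof is correct and follows essentially the same approach as the paper, which dismisses parts~(1) and~(2) as obvious and handles part~(3) via Lemma~\ref{lem:jordan-dist}(1). The only cosmetic difference is that the paper phrases the converse of~(3) contrapositively (if $\nu\neq\varnothing$ then $\mu_1\leq\lambda_1-1$, so $x^{\lambda_1-1}v=0$), whereas you argue directly that $\mu_1\geq\lambda_1$; both rest on the same identification $\dim\F[x]v=\mu_1$.
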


\begin{proof}
Parts (1) and (2) are obvious. Part (3) follows from the observation
that if $(v,x)\in\cO_{\mu;\nu}$ with $\mu+\nu=\lambda$ and $\nu\neq\varnothing$,
then $\mu_1\leq\lambda_1-1$, so $x^{\lambda_1-1}v=0$ by part (1) of Lemma
\ref{lem:jordan-dist}.
\end{proof}

We also need to describe the stabilizers of our group action.
\begin{defn}
For $x\in\cN$, define 
\[
E^x=\{y\in\End(V)\,|\,xy=yx\}\text{ and }
G^x=G\cap E^x.
\]
For $(v,x)\in V\times\cN$, define
\[
E^{(v,x)}=\{y\in E^x\,|\,yv=0\}
\text{ and }G^{(v,x)}=\{g\in G^x\,|\,gv=v\}.
\]
\end{defn}

The first four parts of the next result are well known, but we include them for 
ease of comparison.
\begin{prop}\label{prop:dim}
Let $(\mu;\nu)\in\cQ_n$, and let $\lambda=\mu+\nu$.
Let $(v,x)\in\cO_{\mu;\nu}$, and let $\{v_{ij}\}$ be a normal basis for $(v,x)$.
\begin{enumerate}
\item $E^x$ has basis 
\[
\{y_{i_1,i_2,s}\,|\,1\leq i_1,i_2\leq\ell(\lambda),\,
\max\{0,\lambda_{i_1}-\lambda_{i_2}\}\leq s\leq \lambda_{i_1}-1\},
\]
where
\[
y_{i_1,i_2,s}v_{ij}=\begin{cases}
v_{i_2,j-s},&\text{ if $i=i_1$, $s+1\leq j\leq\lambda_i$,}\\
0,&\text{ otherwise.}
\end{cases}
\]
\item $\dim E^x=n+2n(\lambda)$.
\item $G^x$ is a connected algebraic group of dimension $n+2n(\lambda)$.
\item $\dim \cO_\lambda=n^2-n-2n(\lambda)$.
\item $E^x v=\Span\{v_{ij}\,|\,1\leq i\leq\ell(\mu),1\leq j\leq\mu_i\}$.
\item $\dim E^{(v,x)}=b(\mu;\nu)$.
\item $G^{(v,x)}$ is a connected algebraic group of dimension $b(\mu;\nu)$.
\item $\dim \cO_{\mu;\nu}=\dim \cO_\lambda + |\mu|= n^2 - b(\mu;\nu)$. 
\end{enumerate}
\end{prop}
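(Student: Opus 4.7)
The plan is to proceed through the eight parts in order, since each builds on its predecessors. Parts~(1)--(4) amount to the classical computation of the centralizer of a nilpotent element. For~(1), I would view $V$ as the $\F[x]$-module $\bigoplus_i \F[x]/(x^{\lambda_i})$, so that $E^x = \End_{\F[x]}(V)$. Such an endomorphism is determined by its value on each generator $v_{i,\lambda_i}$, which must lie in $\ker x^{\lambda_i}$; expanding in the Jordan basis yields exactly the family $y_{i_1,i_2,s}$, with the constraint $s\geq\max\{0,\lambda_{i_1}-\lambda_{i_2}\}$ being precisely the condition for the image $v_{i_2,j-s}$ to remain a valid basis element for every $j$ with $s+1\leq j\leq\lambda_{i_1}$. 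For~(2), one computes $\dim E^x = \sum_{i_1,i_2}\min(\lambda_{i_1},\lambda_{i_2}) = \sum_s(\lambda_s^\bt)^2 = n+2n(\lambda)$. For~(3), $E^x$ is a subalgebra of $\End(V)$ and $G^x$ is its Zariski-open unit group, hence a nonempty open subvariety of the irreducible variety $E^x$; this makes $G^x$ irreducible, in particular connected, of the same dimension. Then~(4) is orbit-stabilizer on $G/G^x\isomto\cO_\lambda$.

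The enhanced parts all hinge on computing $E^x v$. For~(5), applying the basis of~(1) to $v = \sum_{i=1}^{\ell(\mu)} v_{i,\mu_i}$ gives $y_{i_1,i_2,s} v = v_{i_2,\mu_{i_1}-s}$ whenever $i_1\leq\ell(\mu)$ and $0\leq s\leq\mu_{i_1}-1$, and zero otherwise. The inclusion $\supseteq$ comes from taking $i_1 = i_2\leq\ell(\mu)$, which realizes every $v_{i,j}$ with $1\leq j\leq\mu_i$. For the opposite inclusion one does a short case analysis on the output index $j = \mu_{i_1}-s$: if $\lambda_{i_1}\leq\lambda_{i_2}$ then $i_1\geq i_2$, so $j\leq\mu_{i_1}\leq\mu_{i_2}$; if $\lambda_{i_1}>\lambda_{i_2}$ then $s\geq\lambda_{i_1}-\lambda_{i_2}$ forces $j\leq\lambda_{i_2}-\nu_{i_1}\leq\lambda_{i_2}-\nu_{i_2} = \mu_{i_2}$. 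Either way $j\leq\mu_{i_2}$ (and $\mu_{i_2}>0$ whenever the output is nonzero), giving the identification $E^x v = \Span\{v_{ij}\mid 1\leq i\leq\ell(\mu),\,1\leq j\leq\mu_i\}$. For~(6), the short exact sequence $0\to E^{(v,x)}\to E^x\to E^x v\to 0$ obtained from evaluation at $v$ yields $\dim E^{(v,x)} = n+2n(\lambda)-|\mu|$, which equals $2n(\mu)+2n(\nu)+|\nu| = b(\mu;\nu)$ via the additivity $n(\mu+\nu) = n(\mu)+n(\nu)$ and $|\mu|+|\nu| = n$.

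For~(7), I would observe that $g\in G^{(v,x)}$ implies $g-1\in E^{(v,x)}$, so $G^{(v,x)} = G\cap(1+E^{(v,x)})$ sits as a nonempty Zariski-open subvariety of the irreducible affine space $1+E^{(v,x)}$; hence $G^{(v,x)}$ is irreducible (and therefore connected) of dimension $b(\mu;\nu)$. Finally~(8) is orbit-stabilizer on $G/G^{(v,x)}\isomto\cO_{\mu;\nu}$, giving $\dim\cO_{\mu;\nu} = n^2-b(\mu;\nu)$; combining with~(4) and the computation from~(6) yields the alternative expression $\dim\cO_\lambda+|\mu|$. The main technical obstacle is the bookkeeping in~(5): the upper-bound half of the argument is where the monotonicity of $\mu$ and $\nu$ across Jordan blocks must be used with care, and every downstream dimension count depends on this identification of $E^x v$.
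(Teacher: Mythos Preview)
Your proposal is correct and follows essentially the same route as the paper's proof: parts~(1)--(4) are the classical centralizer computation, (5) is done by applying the basis $y_{i_1,i_2,s}$ to $v$, and (6)--(8) follow from the evaluation exact sequence, the identification $G^{(v,x)}=G\cap(1+E^{(v,x)})$ as an open set of an affine space, and orbit--stabilizer. One small remark on your case analysis in~(5): the implication ``$\lambda_{i_1}\le\lambda_{i_2}\Rightarrow i_1\ge i_2$'' is false when the two parts are equal, but your desired conclusion $\mu_{i_1}\le\mu_{i_2}$ still holds there (equal $\lambda$-parts force equal $\mu$- and $\nu$-parts, since both $\mu$ and $\nu$ are partitions); the paper avoids the case split by observing the single inequality $\max\{0,\lambda_{i_1}-\lambda_{i_2}\}\ge\mu_{i_1}-\mu_{i_2}$, which directly gives $j=\mu_{i_1}-s\le\mu_{i_2}$.
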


\begin{proof}
Part (1) is straightforward, and (2) follows easily from (1).
The well-known proof of part (3) is that $G^x$ is the principal open subvariety
of $E^x$ defined by the polynomial function $\det$, which clearly does not vanish
identically. Part (4) follows because $\dim \cO_\lambda=\dim G-\dim G^x$.
To prove part (5), we note that for $i_1,i_2,s$
as in part (1),
\[
y_{i_1,i_2,s}v=
\begin{cases}
v_{i_2,\mu_{i_1}-s},&\text{ if $s+1\leq \mu_{i_1}$,}\\
0,&\text{ otherwise.}
\end{cases}
\]
In the first case, $v_{i_2,\mu_{i_1}-s}$ is in the required subspace because
$\max\{0,\lambda_{i_1}-\lambda_{i_2}\}\geq\mu_{i_1}-\mu_{i_2}$; moreover, every
basis element $v_{ij}$ with $1\leq j\leq\mu_i$ occurs in this way for
$i_1=i_2=i$ and $s=\mu_i-j$, so we have the desired equality. Consequently,
$\dim E^x v=|\mu|$, which implies
\[
\dim E^{(v,x)}=\dim E^x-\dim E^x v=n+2n(\lambda)-|\mu|=b(\mu;\nu),
\]
as required for part (6). Part (7) follows because $G^{(v,x)}$ is the
principal open subvariety of $1_V+E^{(v,x)}$ defined by $\det$.
Finally, we have $\dim \cO_{\mu;\nu}=\dim G-\dim G^{(v,x)}=n^2-b(\mu;\nu)$,
which is part (8).
\end{proof}

We can now state an elegant
alternative characterization of $\cO_{\mu;\nu}$, which is prominent in 
the treatment of Travkin (\cite{travkin}). It is evident \textit{a priori}
that $E^x v$ is an $x$-stable subspace of $V$, so there are induced endomorphisms
$x|_{E^x v}$ and $x|_{V/E^x v}$.

\begin{cor} \label{cor:extension}
Let $(v,x)\in V\times\cN$ and $(\mu;\nu)\in\cQ_n$. Then
$(v,x)\in\cO_{\mu;\nu}$ if and only if the Jordan type of
$x|_{E^x v}$ is $\mu$ and the Jordan type of $x|_{V/E^x v}$ is $\nu$.
\end{cor}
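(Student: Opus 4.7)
My plan is to deduce the statement directly from Proposition~\ref{prop:dim}(5) by inspecting the normal basis, then obtain the ``if'' direction from the ``only if'' direction by noting that the orbit partition of $V\times\cN$ is refined by the pair of Jordan types.

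First, I would do the ``only if'' direction. Fix $(v,x)\in\cO_{\mu;\nu}$, let $\lambda=\mu+\nu$, and choose a normal basis $\{v_{ij}\}$ for $(v,x)$ of type $(\mu;\nu)$. By Proposition~\ref{prop:dim}(5),
\[
E^xv=\Span\{v_{ij}\mid 1\leq i\leq\ell(\mu),\ 1\leq j\leq \mu_i\}.
\]
Since $xv_{ij}=v_{i,j-1}$ for $j\geq 2$ and $xv_{i,1}=0$, and since the index set is closed under $j\mapsto j-1$, these vectors already form a Jordan basis for the restriction $x|_{E^xv}$, and the block sizes are exactly $\mu_1\geq\mu_2\geq\cdots\geq\mu_{\ell(\mu)}$. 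Hence $x|_{E^xv}$ has Jordan type $\mu$. For the quotient, the cosets of $\{v_{ij}\mid 1\leq i\leq\ell(\lambda),\ \mu_i<j\leq\lambda_i\}$ (with the convention $\mu_i=0$ for $i>\ell(\mu)$) form a basis of $V/E^xv$. Re-indexing by $\tilde v_{i,k}=v_{i,\mu_i+k}+E^xv$ for $1\leq k\leq \nu_i$, one checks that $x\tilde v_{i,k}=\tilde v_{i,k-1}$ for $k\geq 2$ and $x\tilde v_{i,1}=0$ in the quotient (since $v_{i,\mu_i}\in E^xv$ when $\mu_i>0$, and $v_{i,0}=0$ otherwise). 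The nonzero rows have lengths $\nu_1\geq\nu_2\geq\cdots$, so $x|_{V/E^xv}$ has Jordan type $\nu$.

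For the converse, suppose $(v,x)$ satisfies the Jordan-type conditions for $(\mu;\nu)$. By Proposition~\ref{prop:param}, $(v,x)$ lies in some orbit $\cO_{\mu';\nu'}$. Applying the ``only if'' direction just proved to $(\mu';\nu')$ shows that $x|_{E^xv}$ has Jordan type $\mu'$ and $x|_{V/E^xv}$ has Jordan type $\nu'$. Since these restrictions have uniquely determined Jordan types, we conclude $(\mu';\nu')=(\mu;\nu)$, i.e.\ $(v,x)\in\cO_{\mu;\nu}$.

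There is no real obstacle: the entire argument is a direct reading of the normal basis through the description of $E^xv$ given in Proposition~\ref{prop:dim}(5), followed by a standard ``both partitions of $V\times\cN$ must agree'' step. The only point requiring slight care is the verification that the re-indexed vectors $\tilde v_{i,k}$ really form a Jordan basis for the quotient, but this is immediate from the block-by-block action of $x$ on the $v_{ij}$.
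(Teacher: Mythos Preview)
Your proof is correct and follows the same approach as the paper's own proof: the ``only if'' direction is read off from Proposition~\ref{prop:dim}(5), and the ``if'' direction follows because the orbit containing $(v,x)$ is uniquely determined. You have simply spelled out in detail what the paper compresses into a single sentence.
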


\begin{proof}
Proposition \ref{prop:dim}(5) shows the ``only if'' direction; but the
pair $(\mu;\nu)$ for which $(v,x)\in\cO_{\mu;\nu}$ is uniquely determined
by $(v,x)$, so the ``if'' direction also holds.
\end{proof}

A convenient way to represent a bipartition $(\mu;\nu)$ is as the
`back-to-back union' of the Young diagrams
of $\mu$ and of $\nu$, with a solid vertical line dividing the two.
The columns of this diagram form the following
composition of $n$:
\begin{equation} \label{eqn:composition}
\mu_{\mu_1}^\bt,\mu_{\mu_1-1}^\bt,\cdots,\mu_2^\bt,\mu_1^\bt,\nu_1^\bt,\nu_2^\bt,
\cdots,\nu_{\nu_1-1}^\bt,\nu_{\nu_1}^\bt.
\end{equation}
For example, we represent
\[ 
((3^2 1^3);(2^3))\quad\text{ as }\quad
\begin{tableau}
\row{\c\c\rc\c\c}
\row{\c\c\rc\c\c}
\row{\q\q\rc\c\c}
\row{\q\q\rc}
\row{\q\q\rc}
\end{tableau}
\]
If we are dealing with $(v,x)\in\cO_{\mu;\nu}$ and 
have chosen a normal basis $\{v_{ij}\}$ for $(v,x)$, then
we can identify each basis element $v_{ij}$ with the $j$th box of the
$i$th row of the diagram, as in the proof of
Lemma \ref{lem:jordan-dist}. We then have that $v$ is the sum of the basis
elements in the column immediately to the left of the dividing line, and
Proposition \ref{prop:dim}(5) says that $E^x v$ is the span of all the
basis elements to the left of the dividing line (\textit{i.e.}, in the first
$\mu_1$ columns). More generally,
we define a partial flag 
\[
0=W_0^{(v,x)}\subset W_1^{(v,x)}\subset\cdots\subset
W_{\mu_1+\nu_1}^{(v,x)}=V
\] 
by the rule:
\begin{equation}
W_k^{(v,x)}=\begin{cases}
x^{\mu_1-k}E^x v,&\text{ if $k<\mu_1$,}\\
E^x v,&\text{ if $k=\mu_1$,}\\
(x^{k-\mu_1})^{-1}(E^x v),&\text{ if $k>\mu_1$.}
\end{cases}
\end{equation}
Clearly $W_k^{(v,x)}$ is the span of the basis elements in the first $k$ columns.

%
\section{Orbit Closures}
\label{sect:closures}

Our attention now turns to the Zariski closures of the
$G$-orbits in $V\times\cN$. Some easy facts (compare Lemma \ref{lem:easy}) are:

\begin{lem}
Let $(\mu;\nu)\in\cQ_n$, and let $\lambda=\mu+\nu$.
\begin{enumerate}
\item $\overline{\cO_{\mu;\nu}}\subseteq V\times\overline{\cO_\lambda}$.
\item $\overline{\cO_{\varnothing;\lambda}}=\{0\}\times\overline{\cO_\lambda}$.
\item $\overline{\cO_{\lambda;\varnothing}}=V\times\overline{\cO_\lambda}$.
\end{enumerate}
\end{lem}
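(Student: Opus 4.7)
The plan is to derive all three parts directly from Lemma~\ref{lem:easy} and the dimension formula of Proposition~\ref{prop:dim}(8), with essentially no new work.

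For part (1), Lemma~\ref{lem:easy}(1) gives $\cO_{\mu;\nu}\subseteq V\times\cO_\lambda\subseteq V\times\overline{\cO_\lambda}$. Since $V\times\overline{\cO_\lambda}$ is closed in $V\times\cN$, taking closures of both sides yields the claim. Part (2) is even quicker: Lemma~\ref{lem:easy}(2) identifies $\cO_{\varnothing;\lambda}$ with $\{0\}\times\cO_\lambda$, and since $\{0\}\subseteq V$ is closed, the Zariski closure of this product in $V\times\cN$ is $\{0\}\times\overline{\cO_\lambda}$.

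Part (3) is the one that requires a short argument rather than just an observation. By Lemma~\ref{lem:easy}(3), the orbit $\cO_{\lambda;\varnothing}$ sits inside $V\times\cO_\lambda$, hence inside $V\times\overline{\cO_\lambda}$; this gives the containment $\overline{\cO_{\lambda;\varnothing}}\subseteq V\times\overline{\cO_\lambda}$. For the reverse containment, I would use a dimension comparison. The variety $V\times\overline{\cO_\lambda}$ is irreducible, being the product of two irreducible varieties, and has dimension
\[
n+\dim\cO_\lambda=n+(n^2-n-2n(\lambda))=n^2-2n(\lambda).
\]
On the other hand, Proposition~\ref{prop:dim}(8) gives
\[
\dim\cO_{\lambda;\varnothing}=n^2-b(\lambda;\varnothing)=n^2-2n(\lambda).
\]
Thus $\overline{\cO_{\lambda;\varnothing}}$ is an irreducible closed subvariety of $V\times\overline{\cO_\lambda}$ of the same dimension as the ambient irreducible variety, so the two must coincide.

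The only step with any content is the dimension count in part (3), which relies on having Proposition~\ref{prop:dim}(8) in hand; there is no real obstacle. The key qualitative point is that $\cO_{\lambda;\varnothing}$, the open orbit in the $\lambda$ piece, is dense in the entire $V\times\overline{\cO_\lambda}$, whereas the other orbits $\cO_{\mu;\nu}$ with $\mu+\nu=\lambda$ and $\nu\neq\varnothing$ are lower-dimensional and lie in proper closed subvarieties (a phenomenon that part (1) prepares for and that the remainder of Section~3 will analyze in detail).
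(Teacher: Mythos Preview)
Your proof is correct and follows exactly the same line as the paper's: part~(1) by taking the closure of $\cO_{\mu;\nu}\subseteq V\times\cO_\lambda$, part~(2) directly from Lemma~\ref{lem:easy}(2), and part~(3) by combining the obvious inclusion with an irreducibility-plus-equal-dimension argument. The paper merely states the dimension equality without writing out the computation $n^2-2n(\lambda)$, but the content is identical.
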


\begin{proof}
Part (1) follows from the fact that $\cO_{\mu;\nu}\subseteq V\times\cO_\lambda$.
Part (2) is obvious.
In part (3), the inclusion $\subseteq$ is obvious, 
and the right-hand side is an irreducible variety of
the same dimension as the left-hand side.
\end{proof}

The closures $\overline{\cO_{\mu;\nu}}$ are in general singular varieties, and
our first aim is to define resolutions of their singularities. Motivated by
a standard construction for the closures $\overline{\cO_\lambda}$, we consider
partial flags whose successive codimensions are given by the composition
\eqref{eqn:composition}.

\begin{defn}
For any $(\mu;\nu)\in\cQ_n$, define a partial flag variety
\[
\begin{split} 
\cF_{\mu;\nu}=\{0=V_0 &\subset V_1\subset V_2\subset 
\cdots \subset V_{\mu_1+\nu_1}=V\,|\,\\
&\dim V_{\mu_1-i}=|\mu|-\mu_1^\bt-\cdots-\mu_i^\bt\text{ for }0\leq i\leq \mu_1,\\
&\dim V_{\mu_1+i}=|\mu|+\nu_1^\bt+\cdots+\nu_i^\bt\text{ for }0\leq i\leq \nu_1\},
\end{split}
\]
and two related varieties
\[
\begin{split}
\widehat{\cF_{\mu;\nu}}&=\{(x,(V_k))\in \cN\times \cF_{\mu;\nu}\,|\,
x(V_k)\subseteq V_{k-1}\text{ for }1\leq k\leq \mu_1+\nu_1\},\\
\widetilde{\cF_{\mu;\nu}}&=\{(v,x,(V_k))\in V\times\cN\times \cF_{\mu;\nu}\,|\,
v\in V_{\mu_1},\ (x,(V_k))\in \widehat{\cF_{\mu;\nu}}\}.
\end{split}
\]
We have obvious actions of $G$ on these varieties.
Let 
\[ \psi_{\mu;\nu}:\widehat{\cF_{\mu;\nu}}\to\cN\text{ and }
\pi_{\mu;\nu}:\widetilde{\cF_{\mu;\nu}}\to V\times\cN \]
denote the projection maps,
which are $G$-equivariant.
\end{defn}

The statements relating to $\psi_{\mu;\nu}$ in the following result
are known, but included for ease of reference; they hold for
general compositions of $n$, not just those of the form
\eqref{eqn:composition}, but this is
the only case we will need.

\begin{prop} \label{prop:resolution}
For any $(\mu;\nu)\in\cQ_n$,
\begin{enumerate}
\item the varieties
$\widehat{\cF_{\mu;\nu}}$ and
$\widetilde{\cF_{\mu;\nu}}$ are nonsingular and irreducible;
\item the projections 
$\psi_{\mu;\nu}:\widehat{\cF_{\mu;\nu}}\to\cN$ and
$\pi_{\mu;\nu}:\widetilde{\cF_{\mu;\nu}}\to V\times\cN$ are proper;
\item the image of $\psi_{\mu;\nu}:\widehat{\cF_{\mu;\nu}}\to \cN$ is 
the closure $\overline{\cO_{\mu+\nu}}$;
\item the restriction of $\psi_{\mu;\nu}$ to 
$\psi_{\mu;\nu}^{-1}(\cO_{\mu+\nu})$ is an isomorphism
onto $\cO_{\mu+\nu}$;
\item the image of $\pi_{\mu;\nu}:\widetilde{\cF_{\mu;\nu}}\to V\times\cN$ 
is the closure $\overline{\cO_{\mu;\nu}}$; and
\item the restriction of $\pi_{\mu;\nu}$ to 
$\pi_{\mu;\nu}^{-1}(\cO_{\mu;\nu})$ is an isomorphism
onto $\cO_{\mu;\nu}$.
\end{enumerate}
In summary, 
$\psi_{\mu;\nu}:\widehat{\cF_{\mu;\nu}}\to\overline{\cO_{\mu+\nu}}$ is a resolution 
of singularities of $\overline{\cO_{\mu+\nu}}$, and
$\pi_{\mu;\nu}:\widetilde{\cF_{\mu;\nu}}\to\overline{\cO_{\mu;\nu}}$ is a resolution 
of singularities of $\overline{\cO_{\mu;\nu}}$.
\end{prop}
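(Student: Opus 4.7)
The base $\cF_{\mu;\nu}$ is a smooth, irreducible, projective partial flag variety for $\GL(V)$. The map $\widehat{\cF_{\mu;\nu}}\to\cF_{\mu;\nu}$, $(x,(V_k))\mapsto(V_k)$, is a vector bundle whose fiber over $(V_k)$ is the linear space $\{x\in\End(V)\,|\,x(V_k)\subseteq V_{k-1}\text{ for all }k\}$; any such $x$ is automatically nilpotent, so the fiber does land in $\cN$. Similarly, $\widetilde{\cF_{\mu;\nu}}\to\widehat{\cF_{\mu;\nu}}$ is a vector bundle with fiber $V_{\mu_1}$. This gives smoothness and irreducibility of both total spaces. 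Properness of $\psi_{\mu;\nu}$ and $\pi_{\mu;\nu}$ follows from projectivity of $\cF_{\mu;\nu}$, since $\widehat{\cF_{\mu;\nu}}\subseteq\cN\times\cF_{\mu;\nu}$ and $\widetilde{\cF_{\mu;\nu}}\subseteq V\times\cN\times\cF_{\mu;\nu}$ are closed subvarieties.

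\textbf{Parts (3) and (4).} The image of $\psi_{\mu;\nu}$ is closed, $G$-invariant, and irreducible, so it equals $\overline{\cO_\lambda}$ for some $\lambda\in\cP_n$. Writing $(c_1,\ldots,c_{\mu_1+\nu_1})$ for the composition~\eqref{eqn:composition}, the vector bundle description yields $\dim\widehat{\cF_{\mu;\nu}}=2\sum_{i<j}c_ic_j=n^2-\sum_ic_i^2$; the column-diagram interpretation introduced after Corollary~\ref{cor:extension} identifies the multiset $\{c_i\}$ with $\{(\mu+\nu)_i^\bt\}$, so this becomes $n^2-n-2n(\mu+\nu)=\dim\cO_{\mu+\nu}$. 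A direct construction of some $(x,(V_k))$ with $x\in\cO_{\mu+\nu}$ (from any Jordan basis for such $x$) then forces $\lambda=\mu+\nu$. For~(4), the dimension match makes $\psi_{\mu;\nu}$ generically finite; for generic $x\in\cO_{\mu+\nu}$ the containments $x^{\mu_1+\nu_1-k}(V)\subseteq V_k\subseteq\ker x^k$ have matching dimensions, pinning down the flag uniquely.

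\textbf{Parts (5) and (6).} The canonical flag $W_k^{(v,x)}$ from the end of Section~\ref{sect:param} defines a $G$-equivariant morphism $s\colon\cO_{\mu;\nu}\to\widetilde{\cF_{\mu;\nu}}$, $(v,x)\mapsto(v,x,(W_k^{(v,x)}))$, which is a section of $\pi_{\mu;\nu}$ by construction. Since $\pi_{\mu;\nu}$ is proper and hence separated, $s$ has closed image in $\pi_{\mu;\nu}^{-1}(\cO_{\mu;\nu})$, necessarily of dimension $\dim\cO_{\mu;\nu}$. In particular, $\cO_{\mu;\nu}$ lies in the image of $\pi_{\mu;\nu}$, and combined with properness, irreducibility of $\widetilde{\cF_{\mu;\nu}}$, and the identity $\dim\widetilde{\cF_{\mu;\nu}}=\dim\widehat{\cF_{\mu;\nu}}+|\mu|=\dim\cO_{\mu;\nu}$ (using Proposition~\ref{prop:dim}(8)), this image equals $\overline{\cO_{\mu;\nu}}$, giving~(5). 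For~(6), $\pi_{\mu;\nu}^{-1}(\cO_{\mu;\nu})$ is open in the irreducible $\widetilde{\cF_{\mu;\nu}}$, hence irreducible of dimension $\dim\cO_{\mu;\nu}$; the closed irreducible subvariety $s(\cO_{\mu;\nu})$ of the same dimension must equal $\pi_{\mu;\nu}^{-1}(\cO_{\mu;\nu})$, so $\pi_{\mu;\nu}$ and $s$ are mutually inverse isomorphisms there.

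The main obstacle is the dimension calculation underlying~(3) and (5), specifically identifying the multiset of parts of the composition~\eqref{eqn:composition} with $(\mu+\nu)^\bt$; without this one cannot show that $\dim\widehat{\cF_{\mu;\nu}}$ reaches $\dim\cO_{\mu+\nu}$, and the rest of the argument collapses. Fortunately, the column-diagram picture introduced after Corollary~\ref{cor:extension} makes this identification transparent: the column heights of the bipartition diagram are simultaneously the $c_i$ and the parts of $(\mu+\nu)^\bt$.
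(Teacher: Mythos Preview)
Your arguments for parts (1), (2), (3), (5), and (6) are correct and essentially match the paper's. Your section argument for (6) is in fact cleaner than the paper's version, which deduces (6) from (4): yours stands on its own, using only that a section of a separated morphism is a closed immersion and that a closed irreducible subvariety of full dimension in an irreducible variety must be everything.

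There is, however, a genuine gap in your argument for part (4). The claim that the containments $x^{\mu_1+\nu_1-k}(V)\subseteq V_k\subseteq\ker x^k$ have matching dimensions is false in general. Take for instance $\mu=(2,1)$, $\nu=(2,1)$, so $\lambda=\mu+\nu=(4,2)$ and the composition \eqref{eqn:composition} is $(1,2,2,1)$; then $\dim V_2=3$, while $\dim x^2(V)=2$ and $\dim\ker x^2=4$, so neither inclusion at $k=2$ is an equality. What you implicitly assume is that the ordered composition equals $\lambda^\bt$ or its reversal; the \emph{multiset} of parts agrees with $\lambda^\bt$, as you correctly exploit in the dimension count for (3), but the order generally does not. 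The flag over $x\in\cO_{\mu+\nu}$ is nevertheless unique, and the paper obtains this from the theory of Richardson orbits (citing \cite[Theorem 5.2.3 and Corollary 5.2.4]{carter}): for a Richardson element $x_0\in\Lie(U_{\mu;\nu})$ one has $G^{x_0}\subseteq P_{\mu;\nu}$, whence $P_{\mu;\nu}$ is the unique $G$-conjugate of itself with $x_0$ in the Lie algebra of its unipotent radical. Alternatively, you could salvage (4) by your own method: observe that $E^x v$, and hence the entire flag $(W_k^{(v,x)})$, depends only on $x$ and not on the particular $v$ with $(v,x)\in\cO_{\mu;\nu}$ (any two such $v$ differ by a unit of the algebra $E^x$, so $E^xv$ is unchanged). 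This yields a $G$-equivariant section $\cO_{\mu+\nu}\to\widehat{\cF_{\mu;\nu}}$, and the closed-image-in-open-irreducible argument you gave for (6) then applies verbatim.
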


\begin{proof}
The partial flag variety $\cF_{\mu;\nu}$ is 
a homogenous variety for $G$ with parabolic stabilizers.
Let $P_{\mu;\nu}$ denote one of these stabilizers, say the stabilizer of
the partial flag $(V_k^{0})\in\cF_{\mu;\nu}$, and let $U_{\mu;\nu}$
be the unipotent radical of $P_{\mu;\nu}$.
Then we have an isomorphism
\begin{equation} 
P_{\mu;\nu}/U_{\mu;\nu}\cong
  \GL_{\mu_1^\bt}\times \cdots \times \GL_{\mu_{\mu_1}^\bt}\times
  \GL_{\nu_1^\bt}\times \cdots \times \GL_{\nu_{\nu_1}^\bt}.
\end{equation}
As a consequence, we have
\[ 
\begin{split}
\dim \cF_{\mu;\nu}&=\dim G-\dim P_{\mu;\nu}=\dim U_{\mu;\nu}=
\frac{\dim G-\dim(P_{\mu;\nu}/U_{\mu;\nu})}{2}\\
&=\frac{n^2-(\mu_1^\bt)^2-\cdots-(\mu_{\mu_1}^\bt)^2
			-(\nu_1^\bt)^2-\cdots-(\nu_{\nu_1}^\bt)^2}{2}\\
&=\frac{n^2-n}{2}-n(\mu)-n(\nu).
\end{split}
\]
The projection $\widehat{\cF_{\mu;\nu}}\to\cF_{\mu;\nu}$ is well known
to be a vector bundle; the fibre over $(V_k^0)$ is exactly
$\Lie(U_{\mu;\nu})$, and a common notation for $\widehat{\cF_{\mu;\nu}}$
is $G\times_{P_{\mu;\nu}}\Lie(U_{\mu;\nu})$.
In particular, we have
\begin{equation} \label{eqn:firstdim}
\dim \widehat{\cF_{\mu;\nu}}=2\dim\cF_{\mu;\nu}=n^2-n-2n(\mu)-2n(\nu)
=\dim\cO_{\mu+\nu}.
\end{equation}
Clearly the projection $\widetilde{\cF_{\mu;\nu}}\to\widehat{\cF_{\mu;\nu}}$
is also a vector bundle, of rank $|\mu|$ since the fibre over
$(x,(V_k))$ is just $V_{\mu_1}$. So
\begin{equation} \label{eqn:seconddim}
\dim \widetilde{\cF_{\mu;\nu}}=n^2-|\nu|-2n(\mu)-2n(\nu)=\dim\cO_{\mu;\nu},
\end{equation}
where the last equality uses Proposition \ref{prop:dim}(8). Since the total space
of a vector bundle
over a nonsingular irreducible variety is nonsingular
and irreducible, part (1) is proved. 

Part (2) follows from the fact that $\cF_{\mu;\nu}$ is a projective variety, and
$\widehat{\cF_{\mu;\nu}}$ and $\widetilde{\cF_{\mu;\nu}}$ are closed
subvarieties of $\cN\times\cF_{\mu;\nu}$ and $V\times\cN\times\cF_{\mu;\nu}$
respectively.

It follows that the images of $\psi_{\mu;\nu}$ and $\pi_{\mu;\nu}$ are
$G$-stable irreducible closed subvarieties of $\cN$ and $V\times\cN$ respectively.
Since $G$ has finitely many orbits in $\cN$ and in $V\times\cN$, we can conclude that
both images are the closure of a single $G$-orbit. 
Moreover, $\cO_{\mu;\nu}$ is contained
in the image of $\pi_{\mu;\nu}$, since for any $(v,x)\in\cO_{\mu;\nu}$,
$(v,x,(W_k^{(v,x)}))\in\widetilde{\cF_{\mu;\nu}}$;
this also shows that $\cO_{\mu+\nu}$ is contained in the image of $\psi_{\mu;\nu}$.
Since we have the dimension equalities \eqref{eqn:firstdim} and \eqref{eqn:seconddim},
parts (3) and (5) follow.

Part (4) asserts that
for any $x\in\cO_{\mu+\nu}$ there is a unique pair
$(x,(W_k))\in\psi_{\mu;\nu}^{-1}(x)$, and the map
$\cO_{\mu+\nu}\to\cF_{\mu;\nu}:x\mapsto (W_k)$ is a morphism of varieties.
This statement is part of the theory of
Richardson orbits in $\Lie(G)$, for which see 
\cite[Theorem 5.2.3 and Corollary 5.2.4]{carter}, for example. 
There is a dense 
$P_{\mu;\nu}$-orbit $\cO$ in $\Lie(U_{\mu;\nu})$, and its $G$-saturation
is known to be $\cO_{\mu+\nu}$. If $x_0$ is a fixed element of $\cO$, then
its stabilizer $G^{x_0}$ is contained in $P_{\mu;\nu}$, and
the unique conjugate $P'$ of $P_{\mu;\nu}$ satisfying $x_0\in\Lie(U_{P'})$
is $P_{\mu;\nu}$ itself. Hence for an arbitrary element $gx_0\in\cO_{\mu+\nu}$,
the unique conjugate $P'$ of $P_{\mu;\nu}$ satisfying $gx_0\in\Lie(U_{P'})$
is $gP_{\mu;\nu}g^{-1}$, and the map $gx_0\mapsto gP_{\mu;\nu}$ is a morphism
of varieties from $\cO_{\mu+\nu}\cong G/G^{x_0}$ to $G/P_{\mu;\nu}$, as required.

For part (6) we need to show that for any $(v,x)\in\cO_{\mu;\nu}$ there is
a unique triple in the fibre $\pi_{\mu;\nu}^{-1}(v,x)$, namely $(v,x,(W_k^{(v,x)}))$, 
and moreover that the
map $(v,x)\mapsto(W_k^{(v,x)})$ is a morphism of varieties from $\cO_{\mu;\nu}$
to $\cF_{\mu;\nu}$. Both claims clearly follow from part (4).
\end{proof}

We can now give an alternative characterization of $\overline{\cO_{\mu;\nu}}$,
which should be compared with Corollary \ref{cor:extension}.

\begin{cor} \label{cor:criterion}
If $(v,x)\in V\times\cN$, then $(v,x)\in\overline{\cO_{\mu;\nu}}$ if and only
if there exists a $|\mu|$-dimensional subspace $W$ of $V$ such that:
\begin{enumerate}
\item $v\in W$,
\item $x(W)\subseteq W$,
\item the Jordan type $\mu'$ of $x|_W$ satisfies $\mu'\leq\mu$, and
\item the Jordan type $\nu'$ of $x|_{V/W}$ satisfies $\nu'\leq\nu$.
\end{enumerate}
\end{cor}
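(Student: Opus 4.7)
The plan is to translate the corollary via Proposition~\ref{prop:resolution}(5), which presents $\overline{\cO_{\mu;\nu}}$ as the image of $\pi_{\mu;\nu}\colon\widetilde{\cF_{\mu;\nu}}\to V\times\cN$. The bridge between the two characterisations is the single subspace $W=V_{\mu_1}$ appearing in any triple $(v,x,(V_k))\in\widetilde{\cF_{\mu;\nu}}$: the content of the corollary is that prescribing only this subspace---with the appropriate Jordan-type bounds on $x|_W$ and $x|_{V/W}$---captures the same condition as prescribing the whole flag.

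For the forward direction, pick $(v,x,(V_k))\in\pi_{\mu;\nu}^{-1}(v,x)$ and take $W=V_{\mu_1}$; conditions~(1) and~(2) are built into the definition of $\widetilde{\cF_{\mu;\nu}}$, while $\dim W=|\mu|$ comes from the dimension vector. For~(3), iterating $x(V_k)\subseteq V_{k-1}$ gives $x^j(W)\subseteq V_{\mu_1-j}$; since $\dim V_{\mu_1-j}=\mu^\bt_{j+1}+\cdots+\mu^\bt_{\mu_1}$, this forces $\dim\ker(x^j|_W)\geq\mu^\bt_1+\cdots+\mu^\bt_j$, which is $(\mu')^\bt\geq\mu^\bt$ in dominance and hence $\mu'\leq\mu$. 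The identical argument applied to the induced flag $\{V_{\mu_1+j}/W\}$ on $V/W$ produces $\nu'\leq\nu$.

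For the reverse direction, given such a $W$, I would build a triple over $(v,x)$ by constructing partial flags of the right shape on $W$ and $V/W$ separately and splicing them at $W$. The key ingredient is the classical type A fact (Gerstenhaber) that $\overline{\cO_\lambda}$ consists of nilpotents of Jordan type $\leq\lambda$, so by Proposition~\ref{prop:resolution}(3) (transported to the ambient spaces $W$ and $V/W$) there exists a partial flag of shape $\mu$ on $W$ strictly decreased by $x|_W$, and one of shape $\nu$ on $V/W$ strictly decreased by $x|_{V/W}$. Pulling the second flag back through $V\twoheadrightarrow V/W$ yields compatibility with $W$ (since $x(V_{\mu_1+j})\subseteq V_{\mu_1+j-1}+W=V_{\mu_1+j-1}$), so the concatenated flag lies in $\cF_{\mu;\nu}$; together with $v\in W=V_{\mu_1}$, this produces the desired triple in $\widetilde{\cF_{\mu;\nu}}$, whence $(v,x)\in\overline{\cO_{\mu;\nu}}$. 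The only substantive external input is Gerstenhaber's dominance description of $\overline{\cO_\lambda}$; everything else is dimension bookkeeping around Proposition~\ref{prop:resolution}.
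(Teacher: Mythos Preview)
Your argument is correct and follows essentially the same route as the paper: both use Proposition~\ref{prop:resolution}(5) to identify $\overline{\cO_{\mu;\nu}}$ with the image of $\pi_{\mu;\nu}$, set $W=V_{\mu_1}$, split the flag condition into separate flag conditions on $W$ and $V/W$, and then invoke the special cases of Proposition~\ref{prop:resolution}(3) together with the Gerstenhaber closure order to translate those into the dominance conditions $\mu'\le\mu$ and $\nu'\le\nu$. The only cosmetic difference is that in the forward direction you bypass the citation of Proposition~\ref{prop:resolution}(3) by directly reading off the kernel bounds $\dim\ker(x^j|_W)\ge\mu_1^\bt+\cdots+\mu_j^\bt$ from the flag; the paper instead states the flag conditions (3$'$) and (4$'$) symmetrically and appeals to Proposition~\ref{prop:resolution} in both directions.
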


\begin{proof}
By part (5) of Proposition \ref{prop:resolution}, $(v,x)\in\overline{\cO_{\mu;\nu}}$ if and only if there exists a partial flag $(V_k)\in\cF_{\mu;\nu}$ such that
$(v,x,(V_k))\in\widetilde{\cF_{\mu;\nu}}$. Setting $W=V_{\mu_1}$, we see that this
is equivalent to the existence of a $|\mu|$-dimensional subspace $W$ of $V$
satisfying conditions (1), (2), and the following:
\begin{enumerate} 
\item[(3')] there is a partial flag $0=W_0 \subset W_1\subset
\cdots \subset W_{\mu_1}=W$ such that 
\[ x(W_k)\subseteq W_{k-1}\text{ and }\dim W_{k}=\mu_{\mu_1-k+1}^\bt+\cdots+
\mu_{\mu_1}^\bt,\text{ for $k=1,\cdots,\mu_1$;} \]
\item[(4')] there is a partial flag $0=U_0 \subset U_1\subset
\cdots \subset U_{\nu_1}=V/W$ such that 
\[ x(U_k)\subseteq U_{k-1}\text{ and }\dim U_{k}=\nu_{1}^\bt+\cdots+
\nu_{k}^\bt,\text{ for $k=1,\cdots,\nu_1$.} \]
\end{enumerate}
By the $\nu=\varnothing$
and $\mu=\varnothing$ special cases of Proposition 
\ref{prop:resolution}, the condition
(3') is equivalent to $x|_W\in\overline{\cO_\mu}$,
where $\cO_\mu$ denotes the $\GL(W)$-orbit of nilpotent endomorphisms of $W$
whose Jordan type is $\mu$, 
and (4') is equivalent to $x|_{V/W}\in\overline{\cO_\nu}$,
where $\cO_\nu$ is defined similarly.
Finally, the closure relation among nilpotent orbits for
the general linear group is well known to be given by the dominance order 
on partitions.
\end{proof}
\noindent
Beware that the existence of a $|\mu|$-dimensional subspace $W$ of $V$ such that
(1)--(4) hold with equality in (3) and (4) does not imply that
$(v,x)\in\cO_{\mu;\nu}$. (The criterion in Corollary \ref{cor:extension}
refers to the specific subspace $W=E^x v$.)

\begin{exam} \label{exam:salient}
Two salient examples when $n=4$
are as follows. Firstly, suppose that $(v,x)\in\cO_{(1^2);(2)}$, and let
$\{v_{11},v_{12},v_{13},v_{21}\}$ be a normal basis for $(v,x)$; we have
$v=v_{11}+v_{21}$. Then
$W=\Span\{v_{11},v_{12},v_{21}\}$ is a three-dimensional $x$-stable subspace
containing $v$, such that the Jordan type of $x|_W$ is $(21)$ and that of
$x|_{V/W}$ is $(1)$. By Corollary \ref{cor:criterion}, we have
$(v,x)\in\overline{\cO_{(21);(1)}}$, and hence 
$\cO_{(1^2);(2)}\subset\overline{\cO_{(21);(1)}}$.
Secondly, suppose that $(v,x)\in\cO_{(2^2);\varnothing}$, and let
$\{v_{11},v_{12},v_{21},v_{22}\}$ be a normal basis for $(v,x)$; we have
$v=v_{12}+v_{22}$. Then
$W=\Span\{v_{11},v_{12}+v_{22},v_{21}\}$ is a three-dimensional $x$-stable subspace
containing $v$, such that the Jordan type of $x|_W$ is $(21)$ and that of
$x|_{V/W}$ is $(1)$. By Corollary \ref{cor:criterion}, we have
$(v,x)\in\overline{\cO_{(21);(1)}}$, and hence
$\cO_{(2^2);\varnothing}\subset\overline{\cO_{(21);(1)}}$.
\end{exam}

We now define the partial order which, we will show, 
corresponds to the closure ordering on $G$-orbits
in $V\times\cN$.

\begin{defn} \label{defn:ineq}
For $(\rho;\sigma),(\mu;\nu)\in\cQ_n$, we say that
$(\rho;\sigma)\leq(\mu;\nu)$ if and only if the following
inequalities hold for all $k\geq 0$:
\[
\begin{split}
\rho_1+\sigma_1+\rho_2+\sigma_2+\cdots+\rho_k+\sigma_k
&\leq \mu_1+\nu_1+\mu_2+\nu_2+\cdots+\mu_k+\nu_k,\text{ and}\\
\rho_1+\sigma_1+\cdots+\rho_k+\sigma_k+\rho_{k+1}
&\leq \mu_1+\nu_1+\cdots+\mu_k+\nu_k+\mu_{k+1}.
\end{split}
\]
\end{defn}

This coincides
with the partial order used by
Shoji for his ``limit symbols'' with $e=2$ (see \cite{shoji:limit}).  
Note that the inequalities of the first kind simply say that
$\rho+\sigma\leq\mu+\nu$ for the dominance order.
Obviously $\rho\leq\mu$ and $\sigma\leq\nu$ together imply
$(\rho;\sigma)\leq(\mu;\nu)$, but the converse is false.

To clarify the partial order, we describe
its covering relations: for $(\rho;\sigma),(\mu;\nu)\in\cQ_n$,
we say that $(\mu;\nu)$ covers $(\rho;\sigma)$ if
$(\rho;\sigma)<(\mu;\nu)$ and there is no $(\tau;\upsilon)\in\cQ_n$
such that $(\rho;\sigma)<(\tau;\upsilon)<(\mu;\nu)$.

\begin{lem} \label{lem:covering}
For $(\rho;\sigma),(\mu;\nu)\in\cQ_n$,
$(\mu;\nu)$ covers $(\rho;\sigma)$ if and only if one of the following holds.
\begin{enumerate}
\item $\sigma=\nu$, and for some $\ell>k\geq 2$ we have
\[ 
\begin{split}
&\rho_k=\mu_k-1,\ \rho_\ell=\mu_\ell+1,\ \rho_i=\mu_i
\text{ for $i\neq k,\ell$,}\\
&\text{either $\ell=k+1$ or }
\mu_k-1=\mu_{k+1}=\cdots=\mu_{\ell-1}=\mu_\ell+1,\\
&\text{and }\nu_{k-1}=\nu_k=\cdots=\nu_{\ell}.
\end{split}
\]
\item $\rho=\mu$, and for some $\ell>k\geq 1$ we have
\[ 
\begin{split}
&\sigma_k=\nu_k-1,\ \sigma_\ell=\nu_\ell+1,\ \sigma_i=\nu_i
\text{ for $i\neq k,\ell$,}\\
&\text{either $\ell=k+1$ or }
\nu_k-1=\nu_{k+1}=\cdots=\nu_{\ell-1}=\nu_\ell+1,\\
&\text{and }\mu_{k}=\mu_{k+1}=\cdots=\mu_{\ell+1}.
\end{split}
\]
\item For some $\ell\geq k\geq 1$ we have
\[
\begin{split}
&\rho_i=\mu_i-1\text{ and }\sigma_i=\nu_i+1\text{ for $k\leq i\leq\ell$,}\\ 
&\rho_i=\mu_i\text{ and }\sigma_i=\nu_i\text{ for $i<k$ and $i>\ell$,}\\
&\mu_k=\mu_{k+1}=\cdots=\mu_\ell>\mu_{\ell+1},\\
&\text{and }\nu_{k-1}>\nu_{k}=\nu_{k+1}=\cdots=\nu_{\ell}\
\textup{(}\text{ignore $\nu_{k-1}>\nu_{k}$ if $k=1$}\textup{).}
\end{split}
\]
\item For some $\ell\geq k\geq 1$ we have
\[
\begin{split}
&\sigma_i=\nu_i-1\text{ and }\rho_{i+1}=\mu_{i+1}+1
\text{ for $k\leq i\leq\ell$,}\\ 
&\sigma_i=\nu_i\text{ and }\rho_{i+1}=\mu_{i+1}\text{ for $i<k$ and $i>\ell$, and
$\rho_1=\mu_1$,}\\ 
&\nu_k=\nu_{k+1}=\cdots=\nu_\ell>\nu_{\ell+1},\\
&\text{and }\mu_{k}>\mu_{k+1}=\cdots=\mu_{\ell+1}.
\end{split}
\]
\end{enumerate}
\end{lem}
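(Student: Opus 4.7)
My plan is to rewrite the partial order in terms of the single \emph{interleaved} sequence $a=(\mu_1,\nu_1,\mu_2,\nu_2,\ldots)$, with partial sums $A_m=a_1+\cdots+a_m$, and the analogous $b$, $B_m$ for $(\rho;\sigma)$. Under this reformulation, Definition \ref{defn:ineq} collapses to the single family of inequalities $B_m\leq A_m$ for all $m\geq 1$ (with equality for $m$ large). Set $d_m=A_m-B_m\geq 0$; the coverings then correspond to minimal admissible choices of $(d_m)$, subject to the constraints that the odd-indexed and even-indexed subsequences of $a$ and $b$ are each nonincreasing.

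For the ``if'' direction, I would treat the four cases in parallel. In each case, the first step is a direct substitution into Definition \ref{defn:ineq} verifying that $(\rho;\sigma)<(\mu;\nu)$; the prescribed move makes $d_m\in\{0,1\}$ with a specific support. Given a putative intermediate $(\tau;\upsilon)$ with partial sums $T_m$, the pinched inequalities $B_m\leq T_m\leq A_m$ force $T_m\in\{B_m,B_m+1\}$ on that support. The partition conditions on $\tau$ and $\upsilon$, combined with the ``flatness'' hypotheses in the statement (for example $\mu_k-1=\mu_{k+1}=\cdots=\mu_\ell+1$, or $\nu_{k-1}=\nu_k=\cdots=\nu_\ell$), then leave only $(\tau;\upsilon)=(\rho;\sigma)$ or $(\tau;\upsilon)=(\mu;\nu)$.

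For the ``only if'' direction, I would assume a covering $(\rho;\sigma)<(\mu;\nu)$ and analyze $(d_m)$ in three stages. First, reduce to $d_m\leq 1$ for all $m$: if some $d_m\geq 2$, a single-box shift in $\mu$ or $\nu$, in the row matching the parity of $m$, yields a valid intermediate bipartition, contradicting the covering hypothesis. Second, show that the support $\{m:d_m>0\}$ is a single interval, since two disjoint blocks of positivity correspond to independent moves that can be reversed separately. Third, classify such intervals by the parities of their two endpoints; this gives exactly four parity patterns, which correspond directly to cases (1)--(4). The flatness clauses arise as the precise conditions that prevent the endpoints from being nudged inward (which would otherwise produce a strict intermediate), while the restriction $k\geq 2$ in case~(1) captures the extra possibility, available when $k=1$, of factoring the move through a transfer into $\nu_1$.

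The main obstacle I expect is the careful matching between the parity of the endpoints of $\{m:d_m>0\}$ and the partition constraints on the two halves of each bipartition, since this is precisely what separates the four cases and dictates which flatness clauses are needed. Once this bookkeeping is set up, each case reduces to a routine check; the key conceptual step is the reformulation via the interleaved sequence, after which the enumeration of minimal admissible moves becomes transparent.
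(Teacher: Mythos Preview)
Your reformulation via the interleaved sequence $a=(\mu_1,\nu_1,\mu_2,\nu_2,\ldots)$ and the differences $d_m=A_m-B_m$ is the right starting point, and is also how the paper phrases things. The ``if'' direction as you sketch it is fine. The problem is with Stage~2 of your ``only if'' direction: the claim that the support $\{m:d_m>0\}$ is a single interval is simply false for coverings of types~(3) and~(4) with $\ell>k$. For instance, take $(\mu;\nu)=((2^2);\varnothing)$ and $(\rho;\sigma)=((1^2);(1^2))$, a type~(3) covering with $k=1$, $\ell=2$. Then $d=(1,0,1,0)$, whose support $\{1,3\}$ is not an interval. Your intuition that ``two disjoint blocks of positivity correspond to independent moves that can be reversed separately'' fails precisely because the partition constraints on the two halves of an intermediate $(\tau;\upsilon)$ couple the blocks: reversing only the first block would force $\upsilon_1<\upsilon_2$, and reversing only the second would force $\tau_1<\tau_2$. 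This is exactly what the flatness conditions $\mu_k=\cdots=\mu_\ell$ and $\nu_k=\cdots=\nu_\ell$ in type~(3) encode.

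Consequently, your Stage~3 classification by ``parities of the two endpoints of the support interval'' captures types~(1), (2), and only the $\ell=k$ cases of~(3) and~(4); it misses the general column-moving operations where a whole column of boxes moves simultaneously. To salvage the structural approach you would need to allow the support to be an arithmetic progression of step~$2$ (alternating with zeros), and then argue separately that mixtures of interval-type and progression-type supports cannot occur in a covering. The paper avoids this by taking a different route for the converse: rather than constraining $d$, it fixes $(\rho;\sigma)<(\mu;\nu)$, locates the first position where the interleaved sequences differ, and then by a six-case analysis exhibits one of the four operations that can be applied to $(\mu;\nu)$ to produce a bipartition $(\mu';\nu')$ still satisfying $(\mu';\nu')\geq(\rho;\sigma)$.
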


\begin{proof}
As with the covering relations in $\cP_n$, the situation becomes clearer
if we think diagrammatically.
The relation $(\rho;\sigma)<(\mu;\nu)$ just says
that the composition $(\rho_1,\sigma_1,\rho_2,\sigma_2,\cdots)$ is dominated
by (and not equal to) the composition $(\mu_1,\nu_1,\mu_2,\nu_2,\cdots)$.
This is equivalent to saying that the diagram of $(\rho;\sigma)$ can be obtained from
that of $(\mu;\nu)$ by a sequence of moves of boxes, where at each step
we move an outside corner box to an inside corner which is either in a lower
row or on the right-hand end of the same row. At the start and end of such a
sequence, the boxes on either side of the dividing line form the shape of a
partition; $(\mu;\nu)$ covers $(\rho;\sigma)$ exactly when there is no such
sequence which can be broken into two sequences with this property
(in other words, for every such sequence of moves starting at 
$(\mu;\nu)$ and ending at $(\rho;\sigma)$, the intermediate shapes are not
diagrams of bipartitions). 

The four types of covering relations in the statement 
correspond to the following operations on diagrams.
In type (1), a single box moves down on the $\mu$ side of the dividing line, from an
outside corner to the first available inside corner, there being no inside or outside
corners on the $\nu$ side between these two positions:
\[
\begin{tableau}
\row{\c\c\rc\c\c}
\row{\bc\c\rc\c\c}
\row{\q\q\rc\c\c}
\row{\q\q\rc}
\row{\q\q\rc}
\end{tableau}
\quad
\rightsquigarrow
\quad
\begin{tableau}
\row{\c\c\rc\c\c}
\row{\q\c\rc\c\c}
\row{\q\bc\rc\c\c}
\row{\q\q\rc}
\row{\q\q\rc}
\end{tableau}
\]
Type (2) is analogous, but with the box moving on the 
$\nu$ side of the dividing line:
\[
\begin{tableau}
\row{\c\c\rc\c\c}
\row{\c\c\rc\c\c}
\row{\q\q\rc\c\bc}
\row{\q\q\rc}
\row{\q\q\rc}
\end{tableau}
\quad
\rightsquigarrow
\quad
\begin{tableau}
\row{\c\c\rc\c\c}
\row{\c\c\rc\c\c}
\row{\q\q\rc\c}
\row{\q\q\rc\bc}
\row{\q\q\rc}
\end{tableau}
\]
In type (3), a column of boxes (possibly a single box) moves directly to
the right, from an outside corner on the $\mu$ side to an inside corner
on the $\nu$ side:
\[
\begin{tableau}
\row{\bc\c\rc\c\c}
\row{\bc\c\rc\c\c}
\row{\q\q\rc\c\c}
\row{\q\q\rc}
\row{\q\q\rc}
\end{tableau}
\quad
\rightsquigarrow
\quad
\begin{tableau}
\row{\q\c\rc\c\c\bc}
\row{\q\c\rc\c\c\bc}
\row{\q\q\rc\c\c}
\row{\q\q\rc}
\row{\q\q\rc}
\end{tableau}
\]
In type (4), a column of boxes (possibly a single box) moves to the left
and down one row, from an outside corner on the $\nu$ side to an inside corner
on the $\mu$ side:
\[
\begin{tableau}
\row{\c\c\rc\c\c}
\row{\c\c\rc\c\bc}
\row{\q\q\rc\c\bc}
\row{\q\q\rc}
\row{\q\q\rc}
\end{tableau}
\quad
\rightsquigarrow
\quad
\begin{tableau}
\row{\c\c\rc\c\c}
\row{\c\c\rc\c}
\row{\q\bc\rc\c}
\row{\q\bc\rc}
\row{\q\q\rc}
\end{tableau}
\]
It is easy to see that none of these operations can be broken into two steps
while respecting the shape constraints.

Conversely, we must show that if $(\rho;\sigma) < (\mu;\nu)$, then 
we can apply one of these operations to $(\mu;\nu)$ to obtain a bipartition
$(\mu';\nu')$ which satisfies $(\mu';\nu')\geq(\rho;\sigma)$.
We will specify a suitable operation case by case, leaving the verification
that $(\mu';\nu')\geq(\rho;\sigma)$ to the reader.  

We can assume that the composition $(\rho_1,\sigma_1,\rho_2,\sigma_2,\cdots)$
first differs from the composition $(\mu_1,\nu_1,\mu_2,\nu_2,\cdots)$ in one of the
$\rho$--$\mu$ positions, since the alternative possibility can be reduced to this
by inserting a sufficiently large number $N$ at the start of both compositions
(such an insertion respects the partial order, and interchanges type (1) operations
with type (2), and type (3) with type (4)). Thus we have some
$i\geq 1$ such that $\rho_i<\mu_i$, and the first $i-1$ parts of $\rho$
(respectively, $\sigma$) equal those of $\mu$ (respectively, $\nu$).
Let $j$ be the largest integer such that $\mu_j = \mu_i$, and let
$j'$ be the largest integer such that $\nu_{j'}=\nu_i$ (or set
$j'=\infty$ if $\nu_i=0$). We now have six cases.

{\it Case I.} If $i = 1$, or if $i > 1$ and $\nu_{i-1} > \nu_j$, let 
$k\geq i$ be the smallest integer such that $\nu_k=\nu_j$,
and perform an operation of type~(3) with this $k$ and $\ell=j$.

{\it Case II.} If $i>1$ and $\nu_{i-1}=\nu_j>\nu_{j+1}$ (which forces $j'=j$),
perform an operation of type~(4) with $k=\ell=j$.

{\it Case III.} If $i > 1$, $\nu_{i-1}=\nu_{j+1}$ (which forces $j'\geq j+1$),
and $\mu_{j+1}\leq\mu_j-2$, perform an operation of type~(1) with $k=j$ and 
$\ell=j+1$.

{\it Case IV.} If $i > 1$, $\nu_{i-1}=\nu_{j+1}$, $\mu_{j+1}=\mu_j-1$,
$j'<\infty$, and $\mu_{j'+1}=\mu_j-1$, 
perform an operation of type~(4) with $k=j$ and $\ell=j'$.

{\it Case V.} If $i > 1$, $\nu_{i-1}=\nu_{j+1}$, $\mu_{j+1}=\mu_j-1$,
$j'<\infty$, and $\mu_j-1=\mu_{j'}>\mu_{j'+1}$, 
perform an operation of type~(4) with $k=\ell=j'$.

{\it Case VI.} If $i>1$, $\nu_{i-1}=\nu_{j+1}$, $\mu_{j+1}=\mu_j-1$,
and either $j'=\infty$ or $\mu_{j'}<\mu_j-1$, let $\ell$ be the smallest
integer such that $\mu_\ell<\mu_j-1$. Let $k$ be $\ell-1$ (if $\mu_\ell<\mu_j-2$)
or $j$ (if $\mu_\ell=\mu_j-2$). Perform an operation of type~(1) with
this $k$ and $\ell$.
This concludes the list of cases to be considered.
\end{proof}  


Notice that for fixed $\lambda\in\cP_n$, $\{(\mu;\nu)\in\cQ_n\,|\,\mu+\nu=\lambda\}$
is an interval in $\cQ_n$, in which all covering relations are of type (3).

\begin{exam}
Table~\ref{tbl:q4} shows the Hasse diagram of the poset $\cQ_4$.
The numbers in the left-hand
column are the dimensions of the corresponding orbits, and the labels
of the covering relations are the types from Lemma \ref{lem:covering}.
\end{exam}

\begin{table}
\[
\xymatrix@R=10pt@C=5pt{
16&
*+{\begin{tableau}\row{\c\c\c\rc}\end{tableau}} \ar@{-}[d]^3 \\
15&
*+{\begin{tableau}\row{\c\c\rc\c}\end{tableau}} \ar@{-}[d]^3\ar@{-}[drrr]^4 \\
14&
*+{\begin{tableau}\row{\c\rc\c\c}\end{tableau}} \ar@{-}[d]^3\ar@{-}[drrr]^4
&&& *+{\begin{tableau}\row{\c\c\rc}\row{\q\q\rc}\end{tableau}} \ar@{-}[d]^3 \\
13&
*+{\begin{tableau}\row{\rc\c\c\c}\end{tableau}} \ar@{-}[d]^3\ar@{-}[drr]^4 
&&& *+{\begin{tableau}\row{\c\rc\c}\row{\q\rc}\end{tableau}}
  \ar@{-}[dl]^3\ar@{-}[dr]_3\ar@{-}[drrr]^4 \\
12&
*+{\begin{tableau}\row{\lc\c\c\c}\end{tableau}} \ar@{-}[ddrrr]^2 
&& *+{\begin{tableau}\row{\rc\c\c}\row{\rc}\end{tableau}} \ar@{-}[dr]^3
&& *+{\begin{tableau}\row{\c\rc\c}\row{\q\q\lc}\end{tableau}}
  \ar@{-}[dl]_3\ar@{-}[ddrrrrr]^(.3){4}
&& *+{\begin{tableau}\row{\c\rc}\row{\c\rc}\end{tableau}}
  \ar@{-}[dd]^(.7){3}\ar@{-}[ddrrr]^1 \\
11&
&&& *+{\begin{tableau}\row{\rc\c\c}\row{\q\lc}\end{tableau}}
  \ar@{-}[d]^3\ar@{-}[drrr]^4 \\
10&
&&& *+{\begin{tableau}\row{\lc\c\c}\row{\lc}\end{tableau}} \ar@{-}[ddrrr]^2 
&&& *+{\begin{tableau}\row{\rc\c}\row{\rc\c}\end{tableau}}
  \ar@{-}[dd]^3\ar@{-}[drrr]^4
&&& *+{\begin{tableau}\row{\c\rc}\row{\q\rc}\row{\q\rc}\end{tableau}}
  \ar@{-}[d]^3 \\
9&
&&&&&&&&& *+{\begin{tableau}\row{\rc\c}\row{\rc}\row{\rc}\end{tableau}}
  \ar@{-}[dd]^3 \\
8&
&&&&&& *+{\begin{tableau}\row{\lc\c}\row{\lc\c}\end{tableau}} \ar@{-}[ddrrr]^2 \\
7&
&&&&&&&&& *+{\begin{tableau}\row{\rc\c}\row{\q\lc}\row{\q\lc}\end{tableau}}
  \ar@{-}[d]^3\ar@{-}[ddrrr]^4 \\
6&
&&&&&&&&& *+{\begin{tableau}\row{\lc\c}\row{\lc}\row{\lc}\end{tableau}}
  \ar@{-}[ddrrr]^2 \\
4&
&&&&&&&&&&&&
  *+{\begin{tableau}\row{\rc}\row{\rc}\row{\rc}\row{\rc}\end{tableau}}
  \ar@{-}[d]^3 \\
0&&&&&&&&&&&&&
  *+{\begin{tableau}\row{\lc}\row{\lc}\row{\lc}\row{\lc}\end{tableau}}
}
\]
\caption{Hasse diagram for $\cQ_4$.}\label{tbl:q4}
\end{table}

\begin{thm} \label{thm:closure}
For $(\rho;\sigma),(\mu;\nu)\in\cQ_n$, 
$\cO_{\rho;\sigma}\subseteq\overline{\cO_{\mu;\nu}}$ 
if and only if $(\rho;\sigma)\leq(\mu;\nu)$.
\end{thm}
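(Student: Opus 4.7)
The plan is to prove both directions using Corollary~\ref{cor:criterion} as the main criterion.

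For the ``if'' direction, I would use Lemma~\ref{lem:covering} to reduce to the four types of covering relations (by transitivity of the closure relation), and then for each, given $(v,x)\in\cO_{\rho;\sigma}$ with normal basis $\{v_{ij}\}$, exhibit an $x$-stable $W\subseteq V$ of dimension $|\mu|$ containing $v$ whose Jordan types on $W$ and $V/W$ are dominated by $\mu$ and $\nu$ respectively. In types (1) and (2), $W=E^xv$ works directly: its dimension is $|\rho|=|\mu|$, its Jordan types are $\rho,\sigma$ by Corollary~\ref{cor:extension}, and the required dominances $\rho\leq\mu$, $\sigma\leq\nu$ follow from the explicit covering data. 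In type (3), the plan is to augment: $W=E^xv+\Span\{v_{i,\rho_i+1}:k\leq i\leq\ell\}$ has dimension $|\rho|+(\ell-k+1)=|\mu|$, remains $x$-stable because $xv_{i,\rho_i+1}=v_{i,\rho_i}\in E^xv$, and a direct check using the conditions of case~(3) shows the Jordan types of $x|_W$ and $x|_{V/W}$ are exactly $\mu$ and $\nu$.

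Case~(4) is the principal technical obstacle. Here $|\mu|<|\rho|$, so $W$ must be strictly smaller than $E^xv$ and cannot be obtained as a naive span of normal basis vectors. My plan is to exploit the structural hypotheses $\mu_{k+1}=\cdots=\mu_{\ell+1}$ and $\mu_k>\mu_{k+1}$ to perform a change of Jordan basis in rows $k+1,\ldots,\ell+1$: replace the top basis vectors by suitable linear combinations drawn across these rows so that the component of $v$ in each affected row can be expressed at one position lower in a modified Jordan chain. Then set $W$ to be the span of the first $\mu_i$ entries of each modified chain; the dimensions match and the Jordan-type bounds hold by construction.

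For the ``only if'' direction, given $(v,x)\in\cO_{\rho;\sigma}\cap\overline{\cO_{\mu;\nu}}$, Corollary~\ref{cor:criterion} supplies $W$ of dimension $|\mu|$ containing $v$ with Jordan types $\alpha\leq\mu$ on $W$ and $\beta\leq\nu$ on $V/W$. Since coordinatewise dominance of both components implies the bipartition order, $(\alpha;\beta)\leq(\mu;\nu)$, so it suffices to establish $(\rho;\sigma)\leq(\alpha;\beta)$. The plan is to apply Proposition~\ref{prop:param} to $(v,x|_W)$ in the enhanced nilpotent cone of $W$, obtaining a refined bipartition $(\pi;\tau)$ of $|\mu|$ with $\pi+\tau=\alpha$ (where by Corollary~\ref{cor:extension}, $\pi$ is the Jordan type of $x|_{E^{x|_W}v}$), and then to verify $(\rho;\sigma)\leq(\pi;\tau\cup\beta)$ directly, $\tau\cup\beta$ denoting the sorted multiset union. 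The required inequalities follow from the short exact sequence bound $\dim\ker x^k|_V\leq\dim\ker x^k|_W+\dim\ker x^k|_{V/W}$ applied at each $k$, combined with the inclusion $\F[x]v\subseteq E^{x|_W}v$ which pins down $v$'s Jordan-chain contribution in $W$ and yields $\rho_1\leq\pi_1$ and its analogues for larger $k$. The final step $(\pi;\tau\cup\beta)\leq(\alpha;\beta)=(\pi+\tau;\beta)$ is elementary, since multiset union of partitions is coordinatewise dominated by their sum.
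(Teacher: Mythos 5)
Your ``if'' direction follows the paper's proof closely: reduction to the covering relations of Lemma~\ref{lem:covering}, with $W=E^xv$ in types (1)--(2) and the augmented span in type (3) exactly as in the paper. For type (4) the paper keeps a subspace of $E^xv$ obtained by deleting the $\ell-k+2$ generators $v_{k,\rho_k},\dots,v_{\ell+1,\rho_{\ell+1}}$ of the affected Jordan blocks and adjoining their single sum, which drops the dimension by $\ell-k+1$ as required and still contains $v$; your sketch points in this direction, but ``the Jordan-type bounds hold by construction'' is precisely the thing that must be checked there, so you need to write the subspace down explicitly.

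The real problem is your ``only if'' direction, which departs from the paper and contains a false step: the intermediate inequality $(\rho;\sigma)\leq(\pi;\tau\cup\beta)$ fails in general. Take $n=4$, let $x$ be a single Jordan block $v_4\mapsto v_3\mapsto v_2\mapsto v_1\mapsto 0$, and let $v=v_2$; then $E^xv=\F[x]v=\Span\{v_1,v_2\}$ and $(v,x)\in\cO_{(2);(2)}$. The subspace $W=\Span\{v_1,v_2,v_3\}$ witnesses $(v,x)\in\overline{\cO_{(3);(1)}}$ via Corollary~\ref{cor:criterion}, with $\alpha=(3)$ and $\beta=(1)$, and $(v,x|_W)\in\cO_{(2);(1)}$, so $(\pi;\tau\cup\beta)=((2);(1,1))$. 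But $\rho_1+\sigma_1=4>3=\pi_1+(\tau\cup\beta)_1$, so $((2);(2))\not\leq((2);(1,1))$ and your chain breaks at its first link. The underlying issue is that the Jordan structure of $x$ on $V/E^xv$ need not decompose along $W$: a single block of $x|_{V/E^xv}$ can be split between $W/E^xv$ and $V/W$, so $\sigma$ can strictly dominate $\tau\cup\beta$. Your kernel-dimension bound only controls the first-kind inequalities $\rho+\sigma\leq\alpha+\beta$; it gives nothing for the second-kind inequalities, which are the entire content of this direction. The paper proves them by semicontinuity instead: Lemma~\ref{lem:jordan-dist} identifies $\mu_1+\nu_1+\cdots+\mu_k+\nu_k+\mu_{k+1}$ as the maximal dimension of $\F[x]\{v,w_1,\dots,w_k\}$ over $w_1,\dots,w_k\in V$, and the condition that this maximum be at most $N$ is closed in $(v,x)$ (vanishing of minors of an explicit matrix), hence passes from $\cO_{\mu;\nu}$ to its closure. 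You will need this, or some equally genuinely closed condition, to repair the ``only if'' direction.
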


\begin{proof}
We first prove the ``only if'' direction. Assume that
$\cO_{\rho;\sigma}\subseteq\overline{\cO_{\mu;\nu}}$. Since
$\overline{\cO_{\mu;\nu}}\subseteq V\times\overline{\cO_{\mu+\nu}}$,
we have
$\cO_{\rho+\sigma}\subseteq\overline{\cO_{\mu+\nu}}$, which as we know
implies the dominance condition
$\rho+\sigma\leq\mu+\nu$. All that remains is to prove the
inequalities of the
second kind, namely that for any $k\geq 0$,
\[ \rho_1+\sigma_1+\cdots+\rho_k+\sigma_k+\rho_{k+1}
\leq \mu_1+\nu_1+\cdots+\mu_k+\nu_k+\mu_{k+1}. \]
Our proof, like one of the standard proofs of the closure relation for ordinary
nilpotent orbits, rests on the fact that if $x\in\cN$ has Jordan type
$\lambda$, then $\lambda_1+\cdots+\lambda_k$ is the maximum possible dimension
of an $\F[x]$-submodule $\F[x]\{w_1,\cdots,w_k\}$ generated by $k$ elements
$w_1,\cdots,w_k$ of $V$. Thanks to Lemma \ref{lem:jordan-dist}, this implies that
for $(v,x)\in\cO_{\mu;\nu}$, $\mu_1+\nu_1+\cdots+\mu_k+\nu_k+\mu_{k+1}$ is the
maximum possible dimension of $\F[x]\{v,w_1,\cdots,w_k\}$ for
$w_1,\cdots,w_k\in V$. So the desired inequality amounts to saying that
for fixed $N$, the condition
\begin{equation} \label{eqn:dimcond}
\dim\F[x]\{v,w_1,\cdots,w_k\}\leq N\text{ for any $w_1,\cdots,w_k\in V$}
\end{equation}
is a closed condition on $(v,x)$ (\textit{i.e.}, it determines a closed subvariety
of $V\times\cN$). But no matter what $v,x,w_1,\cdots,w_k$ are,
$\F[x]\{v,w_1,\cdots,w_k\}$ is guaranteed to be spanned by the $(k+1)n$ vectors
\[ v,xv,\cdots,x^{n-1}v,w_1,xw_1,\cdots,x^{n-1}w_1,\cdots,w_k,xw_k,\cdots,
x^{n-1}w_k. \]
So $\dim\F[x]\{v,w_1,\cdots,w_k\}$ is the rank of the $n\times (k+1)n$
matrix which has these vectors as columns, and the condition \eqref{eqn:dimcond}
is equivalent to
the vanishing of all $(N+1)\times (N+1)$ minors of this matrix. This is
a collection of polynomial equations in the coordinates of $v$ and $x$ and the
indeterminate coordinates of $w_1\cdots,w_k$, so we are done.

To prove the ``if'' direction, we may assume that
$(\mu;\nu)$ covers $(\rho;\sigma)$, and invoke Lemma \ref{lem:covering}. Let
$(v,x)\in\cO_{\rho;\sigma}$, and let $\{v_{ij}\}$ be a normal basis for $(v,x)$.
To prove that $(v,x)\in\overline{\cO_{\mu;\nu}}$, it suffices to find a
$|\mu|$-dimensional subspace $W$ of $V$ satisfying conditions (1)--(4)
of Corollary \ref{cor:criterion}. Recall from Corollary \ref{cor:extension}
that $E^x v=\Span\{v_{ij}\,|\,1\leq i\leq \ell(\rho), j\leq\rho_i\}$
is $|\rho|$-dimensional, contains $v=\sum v_{i,\rho_i}$, 
and is preserved by $x$; moreover, the Jordan type of $x|_{E^x v}$ is $\rho$ 
and the Jordan type of $x|_{V/E^x v}$ is $\sigma$. Speaking rather loosely,
we will refer to the set $\{v_{ij}\,|\,j\leq\rho_i\}$ for fixed $i$
as the $i$th Jordan block of $x|_{E^x v}$, and the set
$\{v_{ij}\,|\,\rho_i<j\leq\rho_i+\sigma_i\}$ for fixed $i$ as the
$i$th Jordan block of $x|_{V/E^x v}$.

If the covering relation is one of the first
two types in Lemma \ref{lem:covering}, we simply take
$W=E^x v$. We have $|\rho|=|\mu|$, $\rho\leq\mu$ and $\sigma\leq\nu$,
so $E^x v$ meets all our requirements. In the other two types,
$E^x v$ must be modified slightly; the modifications we choose are modelled 
on Example \ref{exam:salient}.

In type (3), with $k$ and $\ell$ as
in Lemma \ref{lem:covering}, we take 
\[ W=\Span(\{v_{ij}\,|\,j\leq\rho_i\}\cup\{v_{k,\rho_k+1},v_{k+1,\rho_{k+1}+1},
\cdots,v_{\ell,\rho_\ell+1}\})\supseteq E^x v. \]
This is clearly
$|\mu|$-dimensional, contains $v$, and is preserved by $x$.
It is also obvious that the Jordan type of $x|_{W}$ is $\mu$,
since we have lengthened by $1$
the $k$th, $(k+1)$th, $\cdots$, and $\ell$th Jordan blocks of $x|_{E^x v}$.
Similarly, the Jordan type of $x|_{V/W}$ is $\nu$, since we have shortened by $1$
the corresponding Jordan blocks of $x|_{V/E^x v}$. 

In type (4), with $k$ and $\ell$ as
in Lemma \ref{lem:covering}, we take 
\[ 
\begin{split}
W=\Span((\{v_{ij}\,|\,j\leq\rho_i\}&\setminus\{v_{k,\rho_k},v_{k+1,\rho_{k+1}},
\cdots,v_{\ell+1,\rho_{\ell+1}}\})\\
&\cup\{v_{k,\rho_k}+v_{k+1,\rho_{k+1}}+\cdots+v_{\ell+1,\rho_{\ell+1}}\})
\subseteq E^x v. 
\end{split}
\] 
This is clearly
$|\mu|$-dimensional, contains $v$, and is preserved by $x$.
The Jordan type of $x|_V$ is $\mu$, because we have 
shortened by $1$ the $i$th Jordan block of $x|_{E^x v}$ for $k+1\leq i\leq\ell+1$,
and we have kept the $k$th Jordan block the same length but replaced
its generator $v_{k,\rho_k}$ by
$v_{k,\rho_k}+v_{k+1,\rho_{k+1}}+\cdots+v_{\ell+1,\rho_{\ell+1}}$.
Similarly, the Jordan type of $x|_{V/W}$ is $\nu$,
since we have lengthened by $1$
the $i$th Jordan block of $x|_{V/E^x v}$ for $k\leq i\leq\ell$,
and we have kept the $(\ell+1)$th Jordan block the same length
but replaced its generator $v_{\ell+1,\rho_{\ell+1}+\sigma_{\ell+1}}$
by $v_{k,\rho_k+\sigma_{\ell+1}}+v_{k+1,\rho_{k+1}+\sigma_{\ell+1}}+\cdots+
v_{\ell+1,\rho_{\ell+1}+\sigma_{\ell+1}}$.

So in all cases a suitable subspace $W$ can be found, and 
$(v,x)\in\overline{\cO_{\mu;\nu}}$ as required.
\end{proof}

\section{Fibres of the Resolutions of Singularities}
\label{sect:fibres}

For any $(\mu;\nu)\in\cQ_n$, we would like to describe
the intersection cohomology complex
$\IC(\overline{\cO_{\mu;\nu}},\Qlb)$, in particular
the dimensions of its stalks. In the case of ordinary
nilpotent orbit closures, the intersection cohomology is closely related to the
cohomology of the fibres of the resolutions 
$\psi_{\mu;\nu}:\widehat{\cF_{\mu;\nu}}\to\overline{\cO_{\mu+\nu}}$,
which are generalized Springer fibres of type A (``generalized'' in that
they involve the partial flag variety $\cF_{\mu;\nu}$ rather than the
complete flag variety; in the terminology of \cite{borhomacp}, they are examples
of Spaltenstein's varieties $\cP_x^0$). Analogously, we need to study the fibres of
$\pi_{\mu;\nu}:\widetilde{\cF_{\mu;\nu}}\to\overline{\cO_{\mu;\nu}}$.
We adopt a convenient abuse of notation for these fibres:
for $x\in\overline{\cO_{\mu+\nu}}$, $\psi_{\mu;\nu}^{-1}(x)$ will refer to the variety
of partial flags $(V_k)\in\cF_{\mu;\nu}$ such that 
$x(V_k)\subseteq V_{k-1}$ for $1\leq k\leq\mu_1+\nu_1$, not to the corresponding
variety of pairs $(x,(V_k))$; and similarly, we regard
$\pi_{\mu;\nu}^{-1}(v,x)$ as the closed subvariety of
$\psi_{\mu;\nu}^{-1}(x)$ defined by the extra condition $v\in V_{\mu_1}$.

Recall that the resolution $\psi_{\mu;\nu}$ is semismall in the sense
of Goresky and MacPherson. In fact, Spaltenstein in \cite{spaltenstein} proved
a more precise statement:
\begin{thm} \label{thm:spaltenstein}
Let $(\mu;\nu)\in\cQ_n$. For $x\in\cO_\pi\subseteq\overline{\cO_{\mu+\nu}}$,
$\psi_{\mu;\nu}^{-1}(x)$ has $K_{\pi^\bt(\mu+\nu)^\bt}$ irreducible
components, all of dimension
\[ n(\pi)-n(\mu+\nu)=\frac{\dim\cO_{\mu+\nu}-\dim\cO_\pi}{2}. \]
\end{thm}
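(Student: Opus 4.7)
The theorem is essentially a classical result of Spaltenstein, so my approach is to recognize the fiber as a standard type-A partial flag variety fixed by a nilpotent element and to invoke his theorem. By construction, $\psi_{\mu;\nu}^{-1}(x)$ is the variety $\cP_x^\alpha$ of partial flags $0 = V_0 \subset V_1 \subset \cdots \subset V_{\mu_1+\nu_1} = V$ in $V$ with consecutive dimension jumps prescribed by the composition $\alpha$ of $n$ displayed in \eqref{eqn:composition}, each $V_k$ stable under $x$ in the sense that $xV_k \subseteq V_{k-1}$. The multiset of parts of $\alpha$ is $\{\mu_j^\bt\}\cup\{\nu_j^\bt\}$, whose decreasing rearrangement is the partition $(\mu+\nu)^\bt$.

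The first step is to reduce to the case where $\alpha$ is itself this partition. Given any two compositions of $n$ with the same underlying partition, one can interpolate between them by refining or coarsening one parabolic at a time, producing a chain of projective bundles between the ambient partial flag varieties; the induced maps between the fibers over $x$ are again bundles whose fibers are irreducible and equidimensional. Consequently the dimension and the number of irreducible components of $\cP_x^\alpha$ depend only on the partition $(\mu+\nu)^\bt$, not on the ordering of the parts of $\alpha$.

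The second step is to quote Spaltenstein's theorem: for $x$ of Jordan type $\pi$ and a partition $\lambda$, the Spaltenstein variety $\cP_x^\lambda$ is equidimensional of the predicted dimension, and its irreducible components are in bijection with the semistandard Young tableaux of shape $\pi^\bt$ and content $\lambda$, whose number is the Kostka number $K_{\pi^\bt\lambda}$. Specialising to $\lambda = (\mu+\nu)^\bt$ and using $n(\lambda^\bt) = n(\mu+\nu)$ yields the claimed dimension $n(\pi)-n(\mu+\nu)$ and the count $K_{\pi^\bt(\mu+\nu)^\bt}$. The alternative expression $(\dim \cO_{\mu+\nu} - \dim \cO_\pi)/2$ follows immediately from Proposition \ref{prop:dim}(4).

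The main obstacle—settled in Spaltenstein's paper—is constructing the affine paving of $\cP_x^\lambda$ whose cells are indexed by those semistandard tableaux, with the top-dimensional cells corresponding to the irreducible components. His argument is an induction on the number of Jordan blocks of $x$: one stratifies according to the intersection pattern of each $V_k$ with $\ker x$ and with the image of $x$, reducing to Spaltenstein varieties for the restricted endomorphism on the quotient, and then assembles the cells via a Bia\l ynicki-Birula-type decomposition controlled by a cocharacter of $G^x$ whose fixed points in $\cF_{\mu;\nu}$ parametrise row tabloids.
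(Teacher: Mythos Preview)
The paper does not prove this theorem; it cites Spaltenstein \cite{spaltenstein}, so your plan to invoke his result is precisely what is done. However, your reduction step has a genuine gap. When $\beta$ refines $\alpha$ by inserting an extra step $V_k$ between $V_{k-1}$ and $V_{k+1}$, the forgetful map between the ambient partial flag varieties does \emph{not} carry $\cP_x^\beta$ into $\cP_x^\alpha$: a $\beta$-flag in the fibre satisfies only $xV_{k+1}\subseteq V_k$, whereas the coarser fibre demands the strictly stronger condition $xV_{k+1}\subseteq V_{k-1}$. Concretely, for $n=3$ and $x$ with $xe_2=e_1$, $xe_1=xe_3=0$, the complete flag $0\subset\langle e_1\rangle\subset\langle e_1,e_2\rangle\subset V$ lies in the Springer fibre $\cB_x$, but its coarsening $0\subset\langle e_1,e_2\rangle\subset V$ violates $x\langle e_1,e_2\rangle=0$ and hence does not lie in $\cP_x^{(2,1)}$. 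So the claimed bundle maps between the fibres do not exist, and your argument that the component count and dimension depend only on the underlying partition collapses.

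The repair is simply to drop the reduction: Spaltenstein's inductive paving argument (essentially what you sketch in your final paragraph, and what the paper later adapts in Theorem~\ref{thm:paving}) applies uniformly to an arbitrary composition $\alpha$, not only when the parts are arranged in decreasing order, and the Kostka number depends only on the content multiset anyway. If you want a separate conceptual reason for the independence from ordering, use Borho--MacPherson \cite{borhomacp}: the number of top-dimensional components of $\cP_x^\alpha$ is the multiplicity of the trivial character of the Young subgroup $S_\alpha$ in the Springer representation on $H^{\mathrm{top}}(\cB_x)$, and conjugate Young subgroups give the same multiplicity.
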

\noindent
Here $K_{\pi^\bt(\mu+\nu)^\bt}$ is the Kostka number, defined
as in \cite[I.\S6]{macdonald}.

For $(v,x)\in V\times\cN$, let
$P^{(v,x)}$ denote the parabolic subgroup of $G$ which is the stabilizer
of the partial flag $(W_k^{(v,x)})$ defined in 
Section \ref{sect:param}.
Recall that $E^x v$ is one of the subspaces in this
partial flag, and that $x(W_k^{(v,x)})\subseteq W_{k-1}^{(v,x)}$, which means that
$x$ belongs to $\Lie(U^{(v,x)})$, where $U^{(v,x)}$ is the unipotent radical
of $P^{(v,x)}$. We can regard $(v,x)$ as an element of the vector space
$E^x v\oplus\Lie(U^{(v,x)})$, on which $P^{(v,x)}$ acts.

\begin{lem} \label{lem:density}
The $P^{(v,x)}$-orbit of 
$(v,x)$ is dense in $E^x v\oplus\Lie(U^{(v,x)})$.
\end{lem}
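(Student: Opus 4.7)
The plan is to establish density by a dimension count: I want to show that the orbit $P^{(v,x)}\cdot(v,x)$ is an irreducible subvariety of $E^x v\oplus\Lie(U^{(v,x)})$ of the same dimension as the ambient vector space. That the $P^{(v,x)}$-action (on $V$ in the obvious way and on $\End(V)$ by conjugation) preserves this subspace is immediate from the fact that $P^{(v,x)}$ stabilizes every $W_k^{(v,x)}$ by definition, in particular $E^x v = W_{\mu_1}^{(v,x)}$, and hence normalizes $U^{(v,x)}$.

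The next step is to identify the stabilizer of $(v,x)$ in $P^{(v,x)}$ with $G^{(v,x)}$. One inclusion is obvious; for the reverse, I would show $G^{(v,x)}\subseteq P^{(v,x)}$. Indeed, if $g$ commutes with $x$ and fixes $v$, then $gE^x g^{-1}=E^x$, so $g(E^x v)=E^x(gv)=E^x v$, i.e. $g$ preserves $W_{\mu_1}^{(v,x)}$. Because $g$ also commutes with $x$, it then preserves every $W_k^{(v,x)}$, each of which is obtained from $W_{\mu_1}^{(v,x)}$ by applying a power of $x$ or by taking the preimage under a power of $x$; hence $g\in P^{(v,x)}$.

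For the dimension count, recall from the proof of Proposition \ref{prop:resolution} that $\dim \cF_{\mu;\nu} = \tfrac{n^2-n}{2} - n(\mu) - n(\nu)$, and that this also equals $\dim U^{(v,x)}$ (the fibre of $\widehat{\cF_{\mu;\nu}}\to\cF_{\mu;\nu}$ over the flag $(W_k^{(v,x)})$). Combining this with $\dim E^x v = |\mu|$ and $\dim G^{(v,x)} = b(\mu;\nu) = 2n(\mu)+2n(\nu)+|\nu|$ from Proposition \ref{prop:dim}, and using $|\mu|+|\nu|=n$, a short calculation yields
$$\dim P^{(v,x)} - \dim G^{(v,x)} \;=\; |\mu| + \dim \Lie(U^{(v,x)}) \;=\; \dim\bigl(E^x v \oplus \Lie(U^{(v,x)})\bigr).$$
Since $P^{(v,x)}$ is connected, its orbit through $(v,x)$ is irreducible, and the above dimension equality forces it to be dense in the irreducible affine space $E^x v\oplus\Lie(U^{(v,x)})$.

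I do not expect any real obstacle: the only point requiring care is the containment $G^{(v,x)}\subseteq P^{(v,x)}$, which just reflects the canonicity of the flag $(W_k^{(v,x)})$ as a construction from the pair $(v,x)$. This same canonicity was already used in the proof of Proposition \ref{prop:resolution}(6) to exhibit the unique preimage of $(v,x)$ under $\pi_{\mu;\nu}$, so it is implicit in the setup.
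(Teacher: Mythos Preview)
Your proof is correct, but it proceeds differently from the paper's. The paper argues in two stages: first, the $P^{(v,x)}$-orbit of $x$ is dense in $\Lie(U^{(v,x)})$ by Richardson orbit theory; second, it then suffices that the $(G^x\cap P^{(v,x)})$-orbit of $v$ is dense in $E^x v$, and this is checked by an explicit computation using the basis $\{y_{i_1,i_2,s}\}$ of $E^x$ from Proposition~\ref{prop:dim}(1), showing that already $(E^x\cap\Lie(P^{(v,x)}))v=E^x v$. Your approach instead makes a single global dimension count, resting on the containment $G^{(v,x)}\subseteq P^{(v,x)}$. That containment is the canonicity of the flag $(W_k^{(v,x)})$, as you note; it is a clean structural fact and lets you avoid touching the explicit basis of $E^x$ entirely. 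The paper's route, on the other hand, isolates the two ingredients (Richardson density for $x$, and surjectivity onto $E^x v$ for $v$) in a way that makes each visible on its own, and reuses machinery already quoted in the proof of Proposition~\ref{prop:resolution}(4). Both arguments are short; yours is arguably tidier.
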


\begin{proof}
Recall from the proof of Proposition \ref{prop:resolution} that
the Richardson orbit of $P^{(v,x)}$ is the one containing $x$, so
the $P^{(v,x)}$-orbit of $x$ is dense in $\Lie(U^{(v,x)})$. Hence
it suffices to show that the $(G^x\cap P^{(v,x)})$-orbit of $v$ is dense in
$E^x v$. But $G^x\cap P^{(v,x)}$ is dense in $E^x\cap\Lie(P^{(v,x)})$,
and $(E^x\cap\Lie(P^{(v,x)}))v=E^x v$ because, in the notation of
Proposition \ref{prop:dim}, $v_{ij}=y_{i,i,\mu_i-j}v$ for all
$v_{ij}$ in the basis of $E^x v$.
\end{proof}

\begin{lem} \label{lem:dlp-dim}
Suppose that $(V_k^0)\in\pi_{\mu;\nu}^{-1}(v,x)$, and let
$\cO$ be the $P^{(v,x)}$-orbit of $(V_k^0)$ in $\cF_{\mu;\nu}$.
Let $P_{\mu;\nu}$ denote the stabilizer in $G$ of the partial flag
$(V_k^0)$, and $U_{\mu;\nu}$ its unipotent radical. Then
$\cO\cap\pi_{\mu;\nu}^{-1}(v,x)$ is a nonsingular locally closed subvariety
of $\pi_{\mu;\nu}^{-1}(v,x)$, of dimension
\[ 
\dim\left(\frac{P^{(v,x)}}{U^{(v,x)}}\right)
-\dim\left(\frac{P^{(v,x)}\cap P_{\mu;\nu}}
{U^{(v,x)}\cap U_{\mu;\nu}}\right)
-\dim\left(\frac{E^x v}{E^x v\cap V_{\mu_1}^0}\right). \]
\end{lem}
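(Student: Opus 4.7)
The plan is to package $\cO \cap \pi_{\mu;\nu}^{-1}(v,x)$ as a quotient of a preimage under an orbit-type map, and then to extract the dimension from Lemma \ref{lem:density}. Set $H = P^{(v,x)} \cap P_{\mu;\nu}$, the stabiliser of $(V_k^0)$ in $P^{(v,x)}$, so that the map $\rho \colon P^{(v,x)} \to \cO$ sending $g \mapsto g \cdot (V_k^0)$ is a principal $H$-bundle realising $\cO$ as $P^{(v,x)}/H$. Let $X = \rho^{-1}(\cO \cap \pi_{\mu;\nu}^{-1}(v,x))$; unwinding the conditions $v \in gV_{\mu_1}^0$ and $x(gV_k^0) \subseteq gV_{k-1}^0$ gives
\[
X = \{g \in P^{(v,x)} \mid g^{-1}v \in V_{\mu_1}^0 \text{ and } g^{-1}xg \in \Lie(U_{\mu;\nu})\}.
\]
Because $\rho$ is a principal $H$-bundle, proving that $X$ is a nonsingular locally closed subvariety of $P^{(v,x)}$ of dimension $d$ will yield the same conclusion for $\cO \cap \pi_{\mu;\nu}^{-1}(v,x)$ in dimension $d - \dim H$.

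Next I would introduce the auxiliary morphism $\Phi \colon P^{(v,x)} \to E^x v \oplus \Lie(U^{(v,x)})$ given by $\Phi(g) = (g^{-1}v, g^{-1}xg)$. The key observations are that $E^x v = W_{\rho_1}^{(v,x)}$ (where $(\rho;\sigma)$ is the type of $(v,x)$) is a term in the flag fixed by $P^{(v,x)}$, hence $P^{(v,x)}$-stable, and that $U^{(v,x)}$ being the unipotent radical of $P^{(v,x)}$ means $\Lie(U^{(v,x)})$ is $\Ad(P^{(v,x)})$-stable; these imply $\Phi$ is well-defined with the stated target. Modulo the isomorphism $g \mapsto g^{-1}$, the morphism $\Phi$ is exactly the orbit map for the $P^{(v,x)}$-action on $E^x v \oplus \Lie(U^{(v,x)})$ at $(v,x)$, so Lemma \ref{lem:density} tells me that $\Phi(P^{(v,x)})$ is an open dense subvariety and $\Phi$ is a smooth surjection onto this open subset with fibres isomorphic to the smooth connected group $G^{(v,x)}$. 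Putting $A = (V_{\mu_1}^0 \cap E^x v) \oplus (\Lie(U_{\mu;\nu}) \cap \Lie(U^{(v,x)}))$, a linear subspace of the target, the defining conditions of $X$ repackage cleanly as $\Phi(g) \in A$, so $X = \Phi^{-1}(A)$.

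To finish, note that $(v,x) = \Phi(1) \in A$, so $A \cap \Phi(P^{(v,x)})$ is a nonempty open, hence smooth, subvariety of the linear space $A$ of dimension $\dim A$. Pulling back along the smooth morphism $\Phi$ shows that $X$ is nonsingular and locally closed in $P^{(v,x)}$ of dimension
\[
\dim A + \dim P^{(v,x)} - \dim(E^x v) - \dim\Lie(U^{(v,x)}).
\]
Subtracting $\dim H$ and regrouping the six terms into three differences of dimensions of quotients (using $\Lie(U^{(v,x)} \cap U_{\mu;\nu}) = \Lie(U^{(v,x)}) \cap \Lie(U_{\mu;\nu})$ for the second term) gives exactly the claimed formula. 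The main delicate point, in my view, is the setup of $\Phi$: verifying that $E^x v$ and $\Lie(U^{(v,x)})$ really are $P^{(v,x)}$-stable so that $\Phi$ has the correct target, and then checking that the two orbit-closure-style constraints cutting out $X$ recombine precisely into $\Phi^{-1}(A)$. Once this framework is in place, the invocation of Lemma \ref{lem:density} and the ensuing dimension bookkeeping are formal.
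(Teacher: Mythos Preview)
Your proposal is correct and follows essentially the same approach as the paper: both identify $\cO\cap\pi_{\mu;\nu}^{-1}(v,x)$ with $X/(P^{(v,x)}\cap P_{\mu;\nu})$ where $X=\{g\in P^{(v,x)}\mid g^{-1}.(v,x)\in A\}$, and then use Lemma~\ref{lem:density} to compute $\dim X$. The only difference is that the paper outsources the smoothness and dimension calculation for $X$ to \cite[Lemma~2.2]{dlp}, whereas you unpack that lemma explicitly via the smooth orbit map $\Phi$ (whose smoothness follows from the smoothness of the stabiliser $G^{(v,x)}$, Proposition~\ref{prop:dim}(7)).
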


\begin{proof}
The variety $\cO\cap\pi_{\mu;\nu}^{-1}(v,x)$
is clearly isomorphic to
\[ \{p\in P^{(v,x)}\,|\,p^{-1}.(v,x)\in (E^x v\cap V_{\mu_1}^0)\oplus
(\Lie(U^{(v,x)})\cap\Lie(U_{\mu;\nu}))\}/(P^{(v,x)}\cap P_{\mu;\nu}), \]
so it suffices to prove that
\[ 
\{p\in P^{(v,x)}\,|\,p^{-1}.(v,x)\in (E^x v\cap V_{\mu_1}^0)\oplus
(\Lie(U^{(v,x)})\cap\Lie(U_{\mu;\nu}))\}
\]
is nonsingular and has dimension
\[ \dim P^{(v,x)}
-\dim (E^x v\oplus\Lie(U^{(v,x)}))
+\dim ((E^x v\cap V_{\mu_1}^0)\oplus(\Lie(U^{(v,x)})\cap\Lie(U_{\mu;\nu}))). 
\]
As is observed in a general context in \cite[Lemma 2.2]{dlp}, this is implied
by the density proved in the previous Lemma.
\end{proof}

\begin{prop} \label{prop:kostka}
Let $(v,x)\in\cO_{\rho;\sigma}\subseteq\overline{\cO_{\mu;\nu}}$.
Let $X$ be the closed subvariety of $\pi_{\mu;\nu}^{-1}(v,x)$ defined by
the extra condition $V_{\mu_1}=E^x v$.
\begin{enumerate}
\item $X$ is empty unless 
$\rho\leq\mu$ and $\sigma\leq\nu$, in which case
it has $K_{\rho^\bt\mu^\bt}K_{\sigma^\bt\nu^\bt}$ irreducible components,
all of dimension $n(\rho+\sigma)-n(\mu+\nu)$.
\item $\dim(\pi_{\mu;\nu}^{-1}(v,x)\setminus X)
<n(\rho+\sigma)-n(\mu+\nu)
+\frac{|\mu|-|\rho|}{2}$.
\end{enumerate}
\end{prop}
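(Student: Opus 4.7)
The plan is to analyze the projection
$q : \pi_{\mu;\nu}^{-1}(v,x) \to \Gr_{|\mu|}(V)$ defined by $(V_k) \mapsto V_{\mu_1}$,
whose image lies in $\cG = \{W \subset V : \dim W = |\mu|,\ v \in W,\ x(W) \subseteq W\}$. For $W \in \cG$, write $\mu'_W, \nu'_W$ for the Jordan types of $x|_W, x|_{V/W}$. The conditions $x(V_k) \subseteq V_{k-1}$ and the prescribed dimension jumps exhibit the fiber $q^{-1}(W)$ as the product of two Spaltenstein-type partial flag varieties: one in $W$ (for $x|_W$ with target $\mu$) and one in $V/W$ (for $x|_{V/W}$ with target $\nu$). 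By Theorem \ref{thm:spaltenstein}, $q^{-1}(W)$ is nonempty iff $\mu'_W \leq \mu$ and $\nu'_W \leq \nu$, in which case it has $K_{(\mu'_W)^\bt \mu^\bt} K_{(\nu'_W)^\bt \nu^\bt}$ irreducible components each of dimension $n(\mu'_W) + n(\nu'_W) - n(\mu) - n(\nu)$. Part (1) is immediate: $V_{\mu_1} = E^x v$ forces $\dim E^x v = |\mu|$, hence $|\rho| = |\mu|$ by Proposition \ref{prop:dim}(5), and Corollary \ref{cor:extension} then identifies $(\mu'_{E^x v}, \nu'_{E^x v}) = (\rho, \sigma)$, giving the stated count and dimension after using additivity of $n$.

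For Part (2) I split into three cases by the sign of $|\mu|-|\rho|$. When $|\mu| > |\rho|$ the set $X$ is empty, and the closed embedding $\pi_{\mu;\nu}^{-1}(v,x) \hookrightarrow \psi_{\mu;\nu}^{-1}(x)$ combined with Theorem \ref{thm:spaltenstein} applied to $x \in \cO_{\rho+\sigma}$ yields $\dim \pi_{\mu;\nu}^{-1}(v,x) \leq n(\rho+\sigma) - n(\mu+\nu)$, which is strictly less than the required $n(\rho+\sigma) - n(\mu+\nu) + \tfrac{|\mu|-|\rho|}{2}$. When $|\mu| < |\rho|$, again $X = \varnothing$, but this Spaltenstein bound is too coarse; here $v \in V_{\mu_1}$ together with the $x$-stability of each $V_k$ implies $\F[x]\,x^i v \subseteq V_{\mu_1 - i}$ at every level, and tracking (via Lemma \ref{lem:jordan-dist}) how these cyclic-submodule constraints cut the accessible Grassmannian dimensions throughout the flag yields the required strict bound by summation.

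The main obstacle is the case $|\mu| = |\rho|$, where $\dim X = n(\rho+\sigma) - n(\mu+\nu)$ already attains the semismallness threshold and strict inequality $\dim(\pi_{\mu;\nu}^{-1}(v,x) \setminus X) < \dim X$ is needed. Here I would stratify $\pi_{\mu;\nu}^{-1}(v,x)$ by $P^{(v,x)}$-orbits in $\cF_{\mu;\nu}$ and apply Lemma \ref{lem:dlp-dim} orbit by orbit. The third term $\dim(E^x v/(E^x v \cap V_{\mu_1}^0))$ of that formula vanishes precisely on orbits meeting $X$; for any orbit off $X$, the condition $V_{\mu_1}^0 \neq E^x v$ together with $\dim V_{\mu_1}^0 = |\mu| = |\rho| = \dim E^x v$ forces $E^x v \cap V_{\mu_1}^0 \subsetneq E^x v$, giving this term value at least $1$. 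It then remains to show that the first two terms of Lemma \ref{lem:dlp-dim} cannot compensate for this drop---equivalently, that the dimensions of the relevant $(P^{(v,x)}/U^{(v,x)})$-orbits on the Levi partial flag variety are controlled by a Spaltenstein-type estimate on the associated graded. Executing this final estimate is where I expect the bulk of the work to lie.
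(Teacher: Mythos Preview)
Your Part (1) is exactly the paper's argument. For Part (2), you are on the right track in the $|\mu|=|\rho|$ case but have overcomplicated matters by splitting into three cases; the paper uses Lemma \ref{lem:dlp-dim} uniformly, and the step you flag as ``the bulk of the work'' is in fact immediate.

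The point you are missing is this: the quantity
\[
\dim\left(\frac{P^{(v,x)}}{U^{(v,x)}}\right)
-\dim\left(\frac{P^{(v,x)}\cap P_{\mu;\nu}}{U^{(v,x)}\cap U_{\mu;\nu}}\right)
\]
is, by the same density-and-fibration argument that proved Lemma \ref{lem:dlp-dim}, the dimension of the piece $\cO\cap\psi_{\mu;\nu}^{-1}(x)$ of the \emph{ordinary} generalized Springer fibre (simply drop the condition $v\in V_{\mu_1}$). Hence Theorem \ref{thm:spaltenstein} bounds it by $n(\rho+\sigma)-n(\mu+\nu)$, with no further work and no passage to an associated graded. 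Meanwhile, for any $P^{(v,x)}$-orbit not meeting $X$ (equivalently $V_{\mu_1}^0\neq E^x v$, since $P^{(v,x)}$ fixes $E^x v$), one has
\[
\dim\left(\frac{E^x v}{E^x v\cap V_{\mu_1}^0}\right)>\frac{|\rho|-|\mu|}{2}
\]
in \emph{every} case: if $|\rho|>|\mu|$ the left side is $\geq |\rho|-|\mu|$; if $|\rho|=|\mu|$ it is $\geq 1$; if $|\rho|<|\mu|$ it is $\geq 0$ while the right side is negative. Subtracting gives the strict bound in all cases at once. Your separate treatments of $|\mu|>|\rho|$ (correct but redundant) and $|\mu|<|\rho|$ (the cyclic-submodule tracking you sketch) are unnecessary.
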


\begin{proof}
It is clear that $X$ is empty unless $\dim E^x v=|\mu|$, in which case
$X\cong\psi_{\mu;\varnothing}^{-1}(x|_{E^x v})\times
\psi_{\varnothing;\nu}^{-1}(x|_{V/E^x v})$, where we use
$E^x v$ and $V/E^x v$ in place of the vector space $V$ in defining
$\psi_{\mu;\varnothing}$ and $\psi_{\varnothing;\nu}$ respectively.
Recalling that $x|_{E^x v}\in\cO_\rho$ and $x|_{V/E^x v}\in\cO_\sigma$,
part (1) follows from Theorem \ref{thm:spaltenstein}.

To prove part (2), we observe that 
$\pi_{\mu;\nu}^{-1}(v,x)\setminus X$ is a union
of finitely many locally closed pieces $\cO\cap\pi_{\mu;\nu}^{-1}(v,x)$ as in
Lemma \ref{lem:dlp-dim}, because $P^{(v,x)}$ has finitely many orbits in 
$\cF_{\mu;\nu}$ (and fixes $E^x v$). So it suffices to show the desired inequality for
one of these pieces, where in addition to the dimension formula 
of Lemma \ref{lem:dlp-dim}
we know that $V_{\mu_1}^0\neq E^x v$. Since
$\dim E^x v=|\rho|$ and $\dim V_{\mu_1}^0=|\mu|$,
\[ \dim\left(\frac{E^x v}{E^x v\cap V_{\mu_1}^0}\right)>\frac{|\rho|-|\mu|}{2}. \]
Also, by the same argument as in the proof of Lemma \ref{lem:dlp-dim},
\[ \dim\left(\frac{P^{(v,x)}}{U^{(v,x)}}\right)
-\dim\left(\frac{P^{(v,x)}\cap P_{\mu;\nu}}
{U^{(v,x)}\cap U_{\mu;\nu}}\right) \]
is the dimension of a subvariety of $\psi_{\mu;\nu}^{-1}(x)$, and so is at most
$n(\rho+\sigma)-n(\mu+\nu)$ by Theorem \ref{thm:spaltenstein}.
The result follows.
\end{proof}

The special case of the next result where $\mu+\nu=(n)$ was proved independently
in \cite[(11)]{fgt}.
\begin{thm} \label{thm:semismall}
Let $(\mu;\nu)\in\cQ_n$.
\begin{enumerate}
\item The resolution of singularities
$\pi_{\mu;\nu}:\widetilde{\cF_{\mu;\nu}}\to\overline{\cO_{\mu;\nu}}$
is semismall.
\item We have an isomorphism of semisimple perverse sheaves:
\[ R(\pi_{\mu;\nu})_*\Qlb[\dim\cO_{\mu;\nu}]
\cong\bigoplus_{\substack{\rho\leq\mu\\\sigma\leq\nu}}
K_{\rho^\bt\mu^\bt}K_{\sigma^\bt\nu^\bt}\,
\IC(\overline{\cO_{\rho;\sigma}},\Qlb)[\dim\cO_{\rho;\sigma}], \]
where $mA$ denotes $A\oplus\cdots\oplus A$ \textup{(}$m$ copies\textup{)}.
\item For $(v,x)\in\overline{\cO_{\mu;\nu}}$, we have
\[ \dim H^i(\pi_{\mu;\nu}^{-1}(v,x),\Qlb)
=\negthickspace\negthickspace
\sum_{\substack{\rho\leq\mu\\\sigma\leq\nu\\\overline{\cO_{\rho;\sigma}}\ni (v,x)}}
\negthickspace\negthickspace
K_{\rho^\bt\mu^\bt}K_{\sigma^\bt\nu^\bt}
\dim \cH_{(v,x)}^{i-2(n(\rho+\sigma)-n(\mu+\nu))}
\IC(\overline{\cO_{\rho;\sigma}},\Qlb). \]
\end{enumerate}
\end{thm}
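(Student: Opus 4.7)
The plan is to deduce all three parts from Proposition \ref{prop:kostka} together with the Beilinson--Bernstein--Deligne decomposition theorem.

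First I would verify the semismall bound directly. For $(v,x)\in\cO_{\rho;\sigma}\subseteq\overline{\cO_{\mu;\nu}}$, semismallness requires
\[
\dim\pi_{\mu;\nu}^{-1}(v,x)\leq \frac{\dim\cO_{\mu;\nu}-\dim\cO_{\rho;\sigma}}{2}=n(\rho+\sigma)-n(\mu+\nu)+\frac{|\mu|-|\rho|}{2},
\]
where I have used Proposition \ref{prop:dim}(8), the additivity of $n$, and the identity $|\mu|-|\rho|=|\sigma|-|\nu|$ that follows from $|\mu|+|\nu|=|\rho|+|\sigma|=n$. If $\rho\not\leq\mu$ or $\sigma\not\leq\nu$, then $X$ is empty and Proposition \ref{prop:kostka}(2) gives exactly this bound (strictly). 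If $\rho\leq\mu$ and $\sigma\leq\nu$, then $|\rho|=|\mu|$, so the right-hand side equals $n(\rho+\sigma)-n(\mu+\nu)$; Proposition \ref{prop:kostka}(1) shows that $X$ attains this dimension with exactly $K_{\rho^\bt\mu^\bt}K_{\sigma^\bt\nu^\bt}$ top-dimensional components, while Proposition \ref{prop:kostka}(2) shows that $\pi_{\mu;\nu}^{-1}(v,x)\setminus X$ is strictly smaller. This proves (1), and moreover identifies the set of top-dimensional components of the generic fibre over $\cO_{\rho;\sigma}$ with the set of irreducible components of $X$.

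For part (2), I would invoke the decomposition theorem: since $\pi_{\mu;\nu}$ is proper (Proposition \ref{prop:resolution}(2)) and semismall, and the source $\widetilde{\cF_{\mu;\nu}}$ is smooth and irreducible (Proposition \ref{prop:resolution}(1)), the pushforward $R(\pi_{\mu;\nu})_*\Qlb[\dim\cO_{\mu;\nu}]$ is a semisimple perverse sheaf which decomposes as a direct sum of shifted IC sheaves supported on $G$-orbit closures. The multiplicity of $\IC(\overline{\cO_{\rho;\sigma}},\Qlb)[\dim\cO_{\rho;\sigma}]$ equals the dimension of the top cohomology of the fibre over a point of $\cO_{\rho;\sigma}$, which by the preceding paragraph is $K_{\rho^\bt\mu^\bt}K_{\sigma^\bt\nu^\bt}$ (and zero unless $\rho\leq\mu$ and $\sigma\leq\nu$, in which case $\cO_{\rho;\sigma}\subseteq\overline{\cO_{\mu;\nu}}$ is automatic by Theorem \ref{thm:closure}). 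This yields (2). Part (3) then follows by applying proper base change to the inclusion of the point $(v,x)$:
\[
H^i(\pi_{\mu;\nu}^{-1}(v,x),\Qlb)\cong\cH^{i-\dim\cO_{\mu;\nu}}_{(v,x)}\bigl(R(\pi_{\mu;\nu})_*\Qlb[\dim\cO_{\mu;\nu}]\bigr),
\]
substituting the decomposition from (2), and noting that when the multiplicity is nonzero we have $|\sigma|=|\nu|$, so $\dim\cO_{\mu;\nu}-\dim\cO_{\rho;\sigma}=2(n(\rho+\sigma)-n(\mu+\nu))$; the vanishing of IC stalks off the corresponding closure imposes the restriction $\overline{\cO_{\rho;\sigma}}\ni(v,x)$.

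The main obstacle is really the first step, and even there the work is modest: one needs the arithmetic relating $b(\rho;\sigma)-b(\mu;\nu)$ to $n(\rho+\sigma)-n(\mu+\nu)$, and the key observation is that the bound on $\dim(\pi_{\mu;\nu}^{-1}(v,x)\setminus X)$ in Proposition \ref{prop:kostka}(2) is precisely tailored so as to sit strictly below the semismall threshold in all cases, with or without the stronger hypothesis that $X$ is nonempty. Once this is in hand, parts (2) and (3) are standard applications of the decomposition theorem and proper base change.
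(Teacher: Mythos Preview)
Your approach is essentially the paper's own, and the arithmetic in part (1) is handled correctly. The one point you gloss over is in part (2): you assert that the multiplicity of $\IC(\overline{\cO_{\rho;\sigma}},\Qlb)[\dim\cO_{\rho;\sigma}]$ equals the dimension of the top cohomology of the fibre. In general the semismall decomposition theorem produces summands $\IC(\overline{\cO_{\rho;\sigma}}, L)$ where $L$ is the local system on $\cO_{\rho;\sigma}$ given by monodromy on the set of top-dimensional irreducible components of the fibre, and the multiplicity of the \emph{trivial} local system is only the dimension of the monodromy invariants, not the full top cohomology. The paper closes this gap by invoking $G$-equivariance together with the connectedness of the stabilizers $G^{(v,x)}$ (Proposition~\ref{prop:dim}(7)), which forces every $G$-equivariant local system on an orbit to be constant; hence only constant coefficients appear and the multiplicity is indeed the full $K_{\rho^\bt\mu^\bt}K_{\sigma^\bt\nu^\bt}$. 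With that one addition your argument is complete and matches the paper's.
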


\begin{proof}
Part (1) asserts that for
$(v,x)\in\cO_{\rho;\sigma}\subseteq\overline{\cO_{\mu;\nu}}$,
we have 
\begin{equation} 
\dim\pi_{\mu;\nu}^{-1}(v,x)\leq\frac{\dim\cO_{\mu;\nu}-\dim\cO_{\rho;\sigma}}{2}.
\end{equation}
By Proposition \ref{prop:dim}, this upper bound is nothing but
$n(\rho+\sigma)-n(\mu+\nu)
+\frac{|\mu|-|\rho|}{2}$, so the inequality follows from Proposition
\ref{prop:kostka}. Semismallness of $\pi_{\mu;\nu}$ implies that
$R(\pi_{\mu;\nu})_*\Qlb[\dim\cO_{\mu;\nu}]$ is a semisimple perverse sheaf on
$\overline{\cO_{\mu;\nu}}$. By $G$-equivariance and the fact that
the stabilizers $G^{(v,x)}$ are connected (Proposition \ref{prop:dim}(7)),
we have
\begin{equation} \label{eqn:decomp}
R(\pi_{\mu;\nu})_*\Qlb[\dim\cO_{\mu;\nu}]
\cong\bigoplus_{(\tau;\upsilon)\leq(\mu;\nu)}
m_{(\tau;\upsilon)}^{(\mu;\nu)}\,
\IC(\overline{\cO_{\tau;\upsilon}},\Qlb)[\dim\cO_{\tau;\upsilon}]
\end{equation}
for some nonnegative integers $m_{(\tau;\upsilon)}^{(\mu;\nu)}$. Recall
that if $(\rho;\sigma)<(\tau;\upsilon)$, then
\begin{equation} 
\cH^i\IC(\overline{\cO_{\tau;\upsilon}},\Qlb)|_{\cO_{\rho;\sigma}}=0
\text{ for }i\geq\dim\cO_{\tau;\upsilon}-\dim\cO_{\rho;\sigma}.
\end{equation}
So taking the stalk of the $(-\dim\cO_{\rho;\sigma})$th cohomology sheaf
of both sides of \eqref{eqn:decomp} at $(v,x)\in\cO_{\rho;\sigma}$, we find
\begin{equation}
\dim H^{\dim\cO_{\mu;\nu}-\dim\cO_{\rho;\sigma}}(\pi^{-1}(v,x),\Qlb)
=m_{(\rho;\sigma)}^{(\mu;\nu)}.
\end{equation}
But Proposition \ref{prop:kostka} implies that
\[
\dim H^{\dim\cO_{\mu;\nu}-\dim\cO_{\rho;\sigma}}(\pi^{-1}(v,x),\Qlb)
=\begin{cases}
K_{\rho^\bt\mu^\bt}K_{\sigma^\bt\nu^\bt}
&\text{ if $\rho\leq\mu$ and $\sigma\leq\nu$,}\\
0&\text{ otherwise.}
\end{cases} 
\]
Part (2) follows, and part (3) is an immediate consequence.
\end{proof}

Part (2) of Theorem \ref{thm:semismall} implies that the perverse sheaves
$R(\pi_{\mu;\nu})_*\Qlb[\dim\cO_{\mu;\nu}]$ form a basis for the Grothendieck
group of $G$-equivariant perverse sheaves on $V\times\cN$, because the
transition matrix
$(K_{\rho^\bt\mu^\bt}K_{\sigma^\bt\nu^\bt})_{(\mu;\nu),(\rho;\sigma)}$
is unitriangular. In particular, the simple perverse sheaves
$\IC(\overline{\cO_{\rho;\sigma}},\Qlb)[\dim\cO_{\rho;\sigma}]$ are
the unique complexes satisfying Theorem \ref{thm:semismall}(2).
Similarly, Theorem \ref{thm:semismall} part (3) can be used to determine the
local intersection cohomologies $\dim\cH^i_{(v,x)}\IC(\overline{\cO_{\mu;\nu}},\Qlb)$,
if the Betti numbers of the fibres $\pi_{\mu;\nu}^{-1}(v,x)$ are known.

We can now obtain a sheaf-theoretic
analogue of Corollary \ref{cor:criterion}, using a construction
similar to Lusztig's definition of multiplication in geometric Hall algebras
(\cite[\S3]{lusztig:quivers} -- see \cite[\S4]{fgt} for a definition of
``Hall bimodule'' based on the same idea). We need
to keep track of dimensions in our notation, so we temporarily write
$\F^n$ instead of $V$ and $\cN_n$ instead of $\cN$. Define varieties
\[
\begin{split}
\cG_{m,n-m}&=\{(v,x,W)\,|\,v\in W\subseteq \F^n,\, \dim W=m,\, x\in\cN_n,\,
x(W)\subseteq W\},\\
\cH_{m,n-m}&=\{(v,x,W,\psi_1,\psi_2)\,|\,(v,x,W)\in\cG_{m,n-m},\\
&\qquad\qquad \psi_1:W\isomto\F^m,\,\psi_2:\F^n/W\isomto\F^{n-m}\}.
\end{split}
\]
We have obvious projection maps $\pi_{m,n-m}:\cG_{m,n-m}\to\F^n\times\cN_n$
and $q:\cH_{m,n-m}\to\cG_{m,n-m}$, as well as a map
\[ r:\cH_{m,n-m}\to\cN_m\times\cN_{n-m}:(v,x,W,\psi_1,\psi_2)\mapsto
(\psi_1(x|_W)\psi_1^{-1},\psi_2(x|_{\F^n/W})\psi_2^{-1}). \]
Since $r$ is a bundle projection with a nonsingular fibre of dimension
$n^2+m$, the pull-back 
\[ r^*(\IC(\overline{\cO_\mu},\Qlb)[\dim\cO_\mu]
\boxtimes\IC(\overline{\cO_\nu},\Qlb)[\dim\cO_\nu])[n^2+m] \] 
is a simple perverse
sheaf on $\cH_{m,n-m}$ for any $\mu\in\cP_m$, $\nu\in\cP_{n-m}$.
Since it is equivariant for the obvious $(GL(\F^m)\times GL(\F^{n-m}))$-action on
$\cH_{m,n-m}$ (of which $q$ is the quotient projection),
it must be isomorphic to 
$q^*A_{\mu;\nu}[m^2+(n-m)^2]$ for
some simple perverse sheaf $A_{\mu;\nu}$ on $\cG_{m,n-m}$.

\begin{prop} \label{prop:bimodule}
For any $(\mu;\nu)\in\cQ_n$, we have
\[ \IC(\overline{\cO_{\mu;\nu}},\Qlb)[\dim\cO_{\mu;\nu}]
\cong R(\pi_{|\mu|,|\nu|})_*\, A_{\mu;\nu}. \]
\end{prop}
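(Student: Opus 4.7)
The plan is to factor the resolution $\pi_{\mu;\nu}\colon \widetilde{\cF_{\mu;\nu}}\to\overline{\cO_{\mu;\nu}}$ of Proposition~\ref{prop:resolution} through $\pi_{m,n-m}$, where $m=|\mu|$. Define $\alpha\colon\widetilde{\cF_{\mu;\nu}}\to\cG_{m,n-m}$ by $(v,x,(V_k))\mapsto(v,x,V_{\mu_1})$; this is a proper morphism satisfying $\pi_{\mu;\nu}=\pi_{m,n-m}\circ\alpha$, so $R(\pi_{\mu;\nu})_*\Qlb\cong R(\pi_{m,n-m})_*(R\alpha_*\Qlb)$. The strategy is to compute $R\alpha_*\Qlb$ as a direct sum involving the sheaves $A_{\rho;\sigma}$, and then match the resulting decomposition of $R(\pi_{\mu;\nu})_*\Qlb$ against the one given in Theorem~\ref{thm:semismall}(2).

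To compute $R\alpha_*\Qlb$, I would pass to a principal $(\GL_m\times\GL_{n-m})$-bundle cover $\widetilde{\cH}_{\mu;\nu}$ of $\widetilde{\cF_{\mu;\nu}}$ consisting of tuples $(v,x,(V_k),\psi_1,\psi_2)$ with $\psi_1\colon V_{\mu_1}\isomto\F^m$ and $\psi_2\colon V/V_{\mu_1}\isomto\F^{n-m}$. Transporting the flag data via $\psi_1$ and $\psi_2$ yields a natural map $\tilde r\colon\widetilde{\cH}_{\mu;\nu}\to Y_\mu\times Y_\nu$, where $\psi_\mu\colon Y_\mu\to\overline{\cO_\mu}$ and $\psi_\nu\colon Y_\nu\to\overline{\cO_\nu}$ are the type-A Springer-type resolutions for the ordinary nilpotent orbit closures in $\cN_m$ and $\cN_{n-m}$ (that is, the varieties $\widehat{\cF_{\mu;\varnothing}}$ and $\widehat{\cF_{\varnothing;\nu}}$ of Section~\ref{sect:closures}, applied with ambient spaces $\F^m$ and $\F^{n-m}$). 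There is also a natural map $\tilde\alpha\colon\widetilde{\cH}_{\mu;\nu}\to\cH_{m,n-m}$ covering $\alpha$. The square formed by $\tilde r$, $\tilde\alpha$, $r$, and $\psi_\mu\times\psi_\nu$ is Cartesian, because the partial-flag data inside $V_{\mu_1}$ and inside $V/V_{\mu_1}$ are completely independent of one another once $V_{\mu_1}$ itself is fixed. Proper base change combined with the type-A Springer decomposition (semismallness plus Theorem~\ref{thm:spaltenstein} applied to each of $\psi_\mu$ and $\psi_\nu$), together with the definition of $A_{\rho;\sigma}$ (which makes sense on $\cG_{m,n-m}$ for any $\rho\leq\mu$, $\sigma\leq\nu$) and descent through the principal bundle $q$, then yields
\begin{equation*}
R\alpha_*\Qlb[\dim\cO_{\mu;\nu}]\cong\bigoplus_{\substack{\rho\leq\mu\\\sigma\leq\nu}}K_{\rho^\bt\mu^\bt}K_{\sigma^\bt\nu^\bt}\,A_{\rho;\sigma}[\dim\cO_{\rho;\sigma}].
\end{equation*}

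Applying $R(\pi_{m,n-m})_*$ to both sides and invoking Theorem~\ref{thm:semismall}(2) produces an isomorphism of semisimple perverse sheaves on $V\times\cN$ in which both sides are indexed by the same set of pairs $(\rho;\sigma)$ with the same multiplicities $K_{\rho^\bt\mu^\bt}K_{\sigma^\bt\nu^\bt}$. Since this matrix of multiplicities is unitriangular in the product dominance order on pairs of partitions of $m$ and of $n-m$, a straightforward induction on $(\mu;\nu)$ (base case $\mu=(1^m)$, $\nu=(1^{n-m})$, for which both sums reduce to a single term) peels off summands one at a time and gives $R(\pi_{m,n-m})_*A_{\mu;\nu}\cong\IC(\overline{\cO_{\mu;\nu}},\Qlb)[\dim\cO_{\mu;\nu}]$.

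The main obstacle will be the careful bookkeeping of perverse shifts through the successive applications of proper base change, the two Springer decompositions, and descent along the principal bundle $q$, together with a clean verification that the square above really is Cartesian. Once those technicalities are settled, the concluding comparison is purely formal.
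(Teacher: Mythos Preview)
Your proposal is correct and follows essentially the same approach as the paper: your $\alpha$ is the paper's $\tilde\pi_{\mu;\nu}$, your cover $\widetilde{\cH}_{\mu;\nu}$ is the paper's auxiliary variety $X$ sitting in the two Cartesian squares, and both arguments finish by matching against Theorem~\ref{thm:semismall}(2) and invoking unitriangularity of the Kostka multiplicities. One small bookkeeping point: since each $A_{\rho;\sigma}$ is already a simple perverse sheaf on $\cG_{m,n-m}$, the right-hand side of your displayed decomposition of $R\alpha_*\Qlb[\dim\cO_{\mu;\nu}]$ should carry no extra shift $[\dim\cO_{\rho;\sigma}]$ --- exactly the sort of issue you anticipated.
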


\begin{proof}
We have a commutative diagram
\[
\xymatrix{
\widehat{\cF_{\mu;\varnothing}}\times\widehat{\cF_{\varnothing;\nu}} 
\ar[d]_{\psi_{\mu;\varnothing}\times\psi_{\varnothing;\nu}}
& X \ar[l] \ar[r] \ar[d] 
& \widetilde{\cF_{\mu;\nu}} \ar[d]^{\tilde{\pi}_{\mu;\nu}}\\
\cN_m\times\cN_{n-m} 
& \cH_{m,n-m} \ar[l]_(0.4){r} \ar[r]^{q}
& \cG_{m,n-m}
}
\]
where $\tilde{\pi}_{\mu;\nu}(v,x,(V_k))=(v,x,V_{\mu_1})$, and $X$ and the maps
emanating from it are defined so as to make both squares Cartesian.
Now Theorem \ref{thm:spaltenstein} implies that
\begin{equation} \label{eqn:littlekostka}
\begin{split}
R(\psi_{\mu;\varnothing})_*\Qlb[\dim\cO_\mu]
&\cong\bigoplus_{\rho\leq\mu}K_{\rho^\bt\mu^\bt}\,
\IC(\overline{\cO_\rho},\Qlb)[\dim\cO_\rho],\\
R(\psi_{\varnothing;\nu})_*\Qlb[\dim\cO_\nu]
&\cong\bigoplus_{\sigma\leq\nu}K_{\sigma^\bt\nu^\bt}\,
\IC(\overline{\cO_\sigma},\Qlb)[\dim\cO_\sigma].
\end{split}
\end{equation}
(See also \cite[Remark 5.7(3)]{hend:ft}.) Consequently,
\begin{equation}
R(\tilde{\pi}_{\mu;\nu})_*\Qlb[\dim\cO_\mu+\dim\cO_\nu+2|\mu||\nu|+|\mu|]
\cong\bigoplus_{\substack{\rho\leq\mu\\\sigma\leq\nu}}
K_{\rho^\bt\mu^\bt}K_{\sigma^\bt\nu^\bt}\,
A_{\rho;\sigma}.
\end{equation}
Applying $R(\pi_{|\mu|,|\nu|})_*$ to both sides, we obtain
\begin{equation} \label{eqn:bigkostka}
\qquad R(\pi_{\mu;\nu})_*\Qlb[\dim\cO_{\mu;\nu}]\\
\cong
\bigoplus_{\substack{\rho\leq\mu\\\sigma\leq\nu}}
K_{\rho^\bt\mu^\bt}K_{\sigma^\bt\nu^\bt}\,
R(\pi_{|\rho|,|\sigma|})_*A_{\rho;\sigma}.
\end{equation}
By the above-mentioned unitriangularity in 
Theorem \ref{thm:semismall}(2), the result follows.
\end{proof}
\noindent
This Proposition is essentially equivalent to \cite[Theorem 1]{fgt}.

Another known property of the generalized Springer fibre $\psi_{\mu;\nu}^{-1}(x)$
is that it has an \emph{affine paving} (an alpha-partition into
affine spaces, in the terminology of 
\cite[1.3]{dlp}), and consequently has no
odd-degree $\Qlb$-cohomology. The original proof, explained by Spaltenstein
in \cite[5.9]{spalt:book} in the Springer fibre case, is by induction on the
length of the partial flag, relying
on the fact that for fixed $x\in\cN$ and $\pi\in\cP_m$, a variety of the form
\[ \{W\subset V\,|\,\dim W=m,\, x(V)\subseteq W,\, x|_W\in\cO_{\pi}\} \]
can be paved by affine spaces (which in turn follows from the
fact that, under the constraint $x(V)\subseteq W$, 
the Jordan type of $x|_W$ is determined by that of $x$ and the
dimensions $\dim W\cap\ker(x^j)$). A naive analogue of this approach
for the fibres $\pi_{\mu;\nu}^{-1}(v,x)$ fails: for example, if
$(v,x)\in\cO_{(21^2);(1^3)}$, the variety
\[ \{W\subset V\,|\,\dim W=5, v\in W, x(V)\subseteq W, (v,x|_W)\in\cO_{(21^2);(1)}\}\]
is isomorphic to $\Aa^2\setminus\{0\}$. 
Hence the need to be somewhat more careful in proving:

\begin{thm} \label{thm:paving}
For any $(v,x)\in\overline{\cO_{\mu;\nu}}$, the
fibre $\pi_{\mu;\nu}^{-1}(v,x)$ has an affine paving.
\end{thm}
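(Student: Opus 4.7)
The plan is to proceed by induction on the length of the partial flag, $\mu_1+\nu_1$, with the base case $\mu_1+\nu_1\leq 1$ being trivial. Write $(v,x)\in\cO_{\rho;\sigma}$ for some $(\rho;\sigma)\leq(\mu;\nu)$, and fix a normal basis $\{v_{ij}\}$ for $(v,x)$.

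For the inductive step, first I would consider the projection
\[
p:\pi_{\mu;\nu}^{-1}(v,x)\longrightarrow\mathcal{W},\qquad (V_k)_k\longmapsto V_{\mu_1+\nu_1-1},
\]
where $\mathcal{W}$ is the variety of subspaces $W\subseteq V$ of codimension $\nu_{\nu_1}^{\bt}$ (or $\mu_{\mu_1}^{\bt}$ if $\nu=\varnothing$, which degenerates to the known ordinary Spaltenstein case) such that $v\in W$, $x(V)\subseteq W$, and $x(W)\subseteq W$. Over a point $W\in\mathcal{W}$, the fibre $p^{-1}(W)$ is an enhanced Spaltenstein variety of strictly smaller flag length inside $W$, to which the inductive hypothesis applies. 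Thus, if I can (i) pave $\mathcal{W}$ by affine spaces, (ii) arrange that on each piece the Jordan type of $x|_W$ and the bipartition type of $(v,x|_W)$ are constant, and (iii) verify that $p$ is Zariski-locally trivial over each piece, then combining the paving of the base with the affine pavings of the fibres yields an affine paving of $\pi_{\mu;\nu}^{-1}(v,x)$.

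The main obstacle is step (i): as the authors explicitly note, the \emph{naive} stratification of $\mathcal{W}$ by the bipartition type of the pair $(v,x|_W)$ is too coarse, producing strata such as $\Aa^2\setminus\{0\}$ which are not affine. The resolution I would pursue is to refine by additional linear-algebraic invariants, namely the integers
\[
\dim(W\cap x^iV\cap\ker x^j),\qquad \dim\bigl((W\cap x^iV\cap\ker x^j)+\F[x]v\bigr),
\]
for all relevant $i,j$. Fixing all of these invariants reduces the parametrization of $W$'s to a sequence of free choices of complements and lifts in successive subquotients filtered by the $x$-action (mimicking the Spaltenstein argument, but with every subquotient further split according to how $\F[x]v$ meets it); each such stratum of $\mathcal{W}$ should then be isomorphic to an affine space. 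The verification that these refined invariants suffice, i.e.\ that each refined stratum is genuinely affine (rather than quasi-affine like $\Aa^2\setminus\{0\}$), is the heart of the argument and the step I expect to consume most of the technical work.

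Finally, given (i), step (ii) is automatic from the definitions and step (iii) follows from a standard local-triviality argument, using that $p$ is $G^{(v,x)}$-equivariant and that $G^{(v,x)}$ (which is connected by Proposition~\ref{prop:dim}(7)) acts transitively on each stratum of $\mathcal{W}$ up to a smooth equivariant cover. Piecing together the affine strata of $\mathcal{W}$ with the affine pavings of the fibres, which exist by the inductive hypothesis applied inside $W$, gives the required affine paving of $\pi_{\mu;\nu}^{-1}(v,x)$.
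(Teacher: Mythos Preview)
Your approach differs from the paper's and has a real gap at the step you yourself flag as central. The paper does not induct on flag length in the enhanced setting. Instead it removes $v$ in one stroke: after partitioning $\pi_{\mu;\nu}^{-1}(v,x)$ by the sequence $d_k=\dim(V_k\cap\F[x]v)$, it maps each piece to an \emph{ordinary} generalized Springer fibre on $V/\F[x]v$ via $(V_k)\mapsto((V_k+\F[x]v)/\F[x]v)$. Spaltenstein's classical argument paves the target, and the map is checked to be an iterated affine-space bundle over each stratum of the target. The key insight is that $\F[x]v$ is a single Jordan block, so quotienting by it makes the residual problem non-enhanced and the known inductive argument applies there.

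Your proposed invariants $\dim(W\cap x^iV\cap\ker x^j)$ and $\dim((W\cap x^iV\cap\ker x^j)+\F[x]v)$ do not resolve the $\Aa^2\setminus\{0\}$ difficulty, and the paper's own example already shows why. Take $(v,x)\in\cO_{(21^2);(1^3)}$ with normal basis $\{v_{ij}\}$, so $v=v_{1,2}+v_{2,1}+v_{3,1}$. Every $5$-dimensional $W$ with $x(V)\subseteq W$ and $x|_W$ of Jordan type $(3,1,1)$ has the form $W=x(V)+\F(v_{1,3}+av_{2,2}+bv_{3,2})$. Since $\ker x\subseteq x(V)\subseteq W$ and $W\cap\ker x^2=x(V)$ independently of $(a,b)$, every one of your invariants is constant on this $\Aa^2$. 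Yet the bipartition type of $(v,x|_W)$ jumps at $(a,b)=(1,1)$: there $xw=v$, so $x$ acts as zero on $W/\F[x]v$ and $(v,x|_W)\in\cO_{(2);(1^3)}$, whereas for $(a,b)\neq(1,1)$ one gets $(v,x|_W)\in\cO_{(21^2);(1)}$. The feature that separates the bad point is the Jordan structure of $x$ on $W/\F[x]v$---exactly the data the paper's quotient map records, and not something your listed invariants can see. Your step~(iii) is also unjustified: once you refine beyond $G^{(v,x)}$-orbits there is no reason for transitivity, and ``up to a smooth equivariant cover'' does not furnish a local trivialization of $p$.
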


\begin{proof}
Let $P$ be the maximal parabolic subgroup of $G$ which
is the stabilizer of the subspace $\F[x] v$. We can partition 
$\cF_{\mu;\nu}$ into the orbits of $P$, which are well known
to be of the form
\[  (\cF_{\mu;\nu})_{(d_0,d_1,\cdots,d_{\mu_1+\nu_1})}
=\{(V_k)\in\cF_{\mu;\nu}\,|\,\dim(V_k\cap \F[x] v)=d_k,\, 0\leq k\leq\mu_1+\nu_1\}, \]
for integers $d_k$ which satisfy 
$0=d_0\leq d_1\leq \cdots\leq d_{\mu_1+\nu_1}=\dim \F[x] v$ (and also
some upper bounds on $d_{k+1}-d_k$
to guarantee non-emptiness of the above set, which need not concern us).
It suffices to show that each $\pi_{\mu;\nu}^{-1}(v,x)\cap(\cF_{\mu;\nu})_{(d_k)}$
has an affine paving. But for $(V_k)\in\pi_{\mu;\nu}^{-1}(v,x)$,
we have the extra information that $V_{\mu_1}\supseteq\F[x] v$, and that
$V_k\cap\F[x] v$ is $x$-stable. For $0\leq d\leq\dim\F[x]v$, let $U_d$ denote
the unique $d$-dimensional $x$-stable subspace of $\F[x] v$; so
for $(V_k)\in\pi_{\mu;\nu}^{-1}(v,x)$, the condition $\dim (V_k\cap \F[x] v)=d_k$
becomes $V_k\cap\F[x] v=U_{d_k}$. Moreover, the fact that 
$x(V_k)\subseteq V_{k-1}$ forces
$U_{d_k-1}=x(U_{d_k})\subseteq U_{d_{k-1}}$, so
$\pi_{\mu;\nu}^{-1}(v,x)\cap (\cF_{\mu;\nu})_{(d_k)}$ can only be nonempty when
\begin{equation} 
\begin{split}
&d_0=0,\\
&d_k=d_{k-1}\text{ or }d_{k-1}+1\text{ for }1\leq k\leq \mu_1,\\
&d_{\mu_1}=d_{\mu_1+1}=\cdots=d_{\mu_1+\nu_1}=\dim\F[x] v.
\end{split}
\end{equation}
Henceforth we fix integers $d_k$ satisfying these conditions.

We define a variety
\[
\begin{split}
Y=\{0=W_0&\subseteq W_1\subseteq \cdots \subseteq W_{\mu_1+\nu_1}=V/\F[x] v\,|\\
&\dim W_{\mu_1-i}=|\mu|-\mu_1^\bt-\cdots-\mu_i^\bt-d_{\mu_1-i}
\text{ for }0\leq i\leq \mu_1,\\
&\dim W_{\mu_1+i}=|\mu|+\nu_1^\bt+\cdots+\nu_i^\bt-d_{\mu_1+i}
\text{ for }0\leq i\leq \nu_1,\\
&x(W_k)\subseteq W_{k-1}\text{ for }1\leq k\leq\mu_1+\nu_1\}.
\end{split}
\]
The prescribed dimensions here are such that we have
a morphism 
\[ \Psi:\pi_{\mu;\nu}^{-1}(v,x)\cap (\cF_{\mu;\nu})_{(d_k)}\to Y:(V_k)\mapsto
((V_k+\F[x] v)/\F[x] v). \]
We clearly have an alpha-partition $Y=\bigcup_{(\tau_k)}Y_{(\tau_k)}$ where 
$(\tau_k)=(\tau_0,\tau_1,\cdots,\tau_{\mu_1+\nu_1})$ runs over sequences
of partitions where $|\tau_k|$ is the prescribed dimension of $W_k$, and
\[ Y_{(\tau_k)}=\{(W_k)\in Y\,|\,\text{Jordan type of }x|_{W_k}\text{ is }\tau_k\}. \]
(We need not go into the
conditions on $(\tau_k)$ which ensure that $Y_{(\tau_k)}$ is nonempty.)
Now $Y$ is a generalized Springer fibre based on the vector space $V/\F[x] v$,
although it is not quite of the form $\psi_{\tilde{\mu};\nu}^{-1}(x)$,
because the successive codimensions in the 
partial flag $W_0\subseteq W_1\subseteq \cdots\subseteq W_{\mu_1}$
need not be the columns of a Young diagram arranged in non-decreasing order.
Spaltenstein's argument still applies, however, and shows that each
$Y_{(\tau_k)}$ has an affine paving. 
Hence it suffices to show that the restriction of $\Psi$ to
$\Psi^{-1}(Y_{(\tau_k)})$ is a bundle projection with base
$Y_{(\tau_k)}$ and fibres isomorphic to affine space of some dimension.

Now fix $(W_k)\in Y$ and let $\widetilde{W}_k$ denote the preimage of $W_k$
under the projection $V\to V/\F[x] v$. The fibre $\Psi^{-1}((W_k))$ consists of all
partial flags $0=V_0\subset V_1\subset\cdots\subset V_{\mu_1+\nu_1}=V$ such that
\begin{equation} 
V_k+\F[x] v=\widetilde{W}_k,\ V_k\cap\F[x] v=U_{d_k},\ x(V_k)\subseteq V_{k-1}
\text{ for }1\leq k\leq\mu_1+\nu_1.
\end{equation}
Note that these conditions force $V_k=\widetilde{W}_k$ for $k\geq\mu_1$.
Imagine that $V_k$ is fixed and we are choosing $V_{k-1}$.
If $d_{k-1}=d_k$, then $V_{k-1}$ is forced to equal $V_k\cap\widetilde{W}_{k-1}$,
since this has the right dimension. If $d_{k-1}=d_k-1$, then $V_{k-1}$ must be
a codimension-$1$ subspace of $V_k\cap\widetilde{W}_{k-1}$ which contains
$x(V_k)+U_{d_{k-1}}$, and does not contain $x(V_k)+U_{d_{k}}$. But for any
vector spaces $A\subset B\subseteq C$ where $\dim(B/A)=1$, the variety
\[ \{D\subset C\,|\,\dim(C/D)=1,\, A\subseteq D,\, B\not\subseteq D\} \]
is isomorphic to affine space of dimension $\dim(C/B)$; in our case this
dimension is 
\[ \dim\left(\frac{V_k\cap\widetilde{W}_{k-1}}{x(V_k)+U_{d_{k}}}\right)
= (\dim W_{k-1}+d_k)-(\dim x(W_k)+d_k)=\dim(W_{k-1}/x(W_k)), \]
which is independent of $V_k$.
Since an affine space bundle over affine space is itself an affine space, we 
can conclude that
\begin{equation} 
\Psi^{-1}((W_k))\cong \Aa^{f(W_k)}\text{ where }
f(W_k)=\sum_{\substack{1\leq k\leq\mu_1\\d_{k-1}=d_k-1}}\dim(W_{k-1}/x(W_k)).
\end{equation}
The dimensions $\dim(W_{k-1}/x(W_k))$ are 
constant as $(W_k)$ runs over one of the $Y_{(\tau_k)}$
pieces, so the fibres $\Psi^{-1}((W_k))$ fit into a bundle as required.
\end{proof}

\begin{cor} \label{cor:oddzero}
Let $(\rho;\sigma),(\mu;\nu)\in\cQ_n$.
\begin{enumerate}
\item There is a polynomial $\Pi^{\rho;\sigma}_{\mu;\nu}(t)\in\N[t]$,
independent of $\F$, such that for any $(v,x)\in\cO_{\rho;\sigma}$,
\[
\begin{split}
&\sum_i \dim H^{2i}(\pi_{\mu;\nu}^{-1}(v,x),\Qlb)\,t^i=\Pi^{\rho;\sigma}_{\mu;\nu}(t),
\\
&\text{ and }H^i(\pi_{\mu;\nu}^{-1}(v,x),\Qlb)=0\text{ for $i$ odd.}
\end{split}
\]
\item There is a polynomial $\IC^{\rho;\sigma}_{\mu;\nu}(t)\in\N[t]$,
independent of $\F$, such that for any $(v,x)\in\cO_{\rho;\sigma}$,
\[
\begin{split}
&\sum_i \dim \cH^{2i}_{(v,x)}\IC(\overline{\cO_{\mu;\nu}},\Qlb)\,t^i=
\IC^{\rho;\sigma}_{\mu;\nu}(t),\\
&\text{ and }
\cH^i_{(v,x)}\IC(\overline{\cO_{\mu;\nu}},\Qlb)=0\text{ for $i$ odd.}
\end{split}
\]
\item
These polynomials are related by the rule:
\[ \Pi^{\tau;\upsilon}_{\mu;\nu}(t)
=\sum_{\substack{\rho\leq\mu\\\sigma\leq\nu\\(\rho;\sigma)\geq(\tau;\upsilon)}}
K_{\rho^\bt\mu^\bt}K_{\sigma^\bt\nu^\bt}\,t^{n(\rho+\sigma)-n(\mu+\nu)}
\IC^{\tau;\upsilon}_{\rho;\sigma}(t). \]
\item We have
\[
\begin{split}
&\Pi^{\rho;\sigma}_{\mu;\nu}(t)=\IC^{\rho;\sigma}_{\mu;\nu}(t)=0\text{ if }
(\rho;\sigma)\not\leq(\mu;\nu),\\
&\Pi^{\mu;\nu}_{\mu;\nu}(t)=\IC_{\mu;\nu}^{\mu;\nu}(t)=1,\\
&\Pi^{\rho;\sigma}_{\mu;\nu}(0)=\IC^{\rho;\sigma}_{\mu;\nu}(0)=1\text{ if }
(\rho;\sigma)<(\mu;\nu).\\
\end{split}
\]
\end{enumerate}
\end{cor}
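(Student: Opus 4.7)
\emph{Proof proposal.}
The main input is Theorem \ref{thm:paving}. For part (1), note that by $G$-equivariance the isomorphism class of $\pi_{\mu;\nu}^{-1}(v,x)$ depends only on the orbit $\cO_{\rho;\sigma}$ containing $(v,x)$. Theorem \ref{thm:paving} supplies an affine paving of this fibre, which immediately forces the vanishing of odd-degree $\ell$-adic cohomology and identifies the Betti number in degree $2i$ with the number of complex $i$-dimensional cells. Inspection of the paving constructed in the proof of Theorem \ref{thm:paving}---a partition into pieces indexed by integer sequences $(d_k)$ and partition sequences $(\tau_k)$, each an affine bundle over a variety $Y_{(\tau_k)}$ paved by Spaltenstein's method---shows that the cell count in each dimension is combinatorially determined, and hence independent of $\F$. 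This gives $\Pi^{\rho;\sigma}_{\mu;\nu}(t) \in \N[t]$.

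Part (3) is a direct polynomial reformulation of Theorem \ref{thm:semismall}(3). The transition matrix $\bigl(K_{\rho^\bt\mu^\bt}K_{\sigma^\bt\nu^\bt}\, t^{n(\rho+\sigma)-n(\mu+\nu)}\bigr)$ is upper unitriangular on $\cQ_n$: the diagonal entry at $(\mu;\nu)$ is $1$, and any off-diagonal entry requires $\rho \leq \mu$ and $\sigma \leq \nu$ with at least one strict, forcing $(\rho;\sigma) < (\mu;\nu)$ and a strictly positive $t$-exponent (via additivity of $n(\cdot)$ and the strict monotonicity $n(\lambda) > n(\mu)$ for $\lambda < \mu$ in the dominance order). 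Inverting this unitriangular system expresses each $\IC^{\tau;\upsilon}_{\mu;\nu}$ as a $\Z$-linear combination of the $\Pi^{\tau;\upsilon}_{\rho;\sigma}$, so field-independence descends from the $\Pi$'s to the $\IC$'s. The $\IC$-polynomials lie in $\N[t]$ directly from their definition as sums of stalk dimensions, completing part (2).

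Part (4) assembles previously established facts. The vanishing $\Pi^{\rho;\sigma}_{\mu;\nu} = 0$ when $(\rho;\sigma) \not\leq (\mu;\nu)$ follows from Theorem \ref{thm:closure} (the fibre is empty); likewise $\IC^{\rho;\sigma}_{\mu;\nu} = 0$ because $\IC(\overline{\cO_{\mu;\nu}})$ is supported on $\overline{\cO_{\mu;\nu}}$. The diagonal values $\Pi^{\mu;\nu}_{\mu;\nu} = \IC^{\mu;\nu}_{\mu;\nu} = 1$ come from Proposition \ref{prop:resolution}(6) (the fibre over the open orbit is a point) together with the usual normalization of IC on its open stratum. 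Evaluating (3) at $t = 0$ leaves only terms with $n(\rho'+\sigma') = n(\mu+\nu)$, which by the strict monotonicity of $n$ forces $\rho' = \mu$ and $\sigma' = \nu$; thus $\Pi^{\rho;\sigma}_{\mu;\nu}(0) = \IC^{\rho;\sigma}_{\mu;\nu}(0)$. This common value equals $1$ by connectedness of the non-empty fibre $\pi_{\mu;\nu}^{-1}(v,x)$, which can be obtained by applying Zariski's connectedness theorem to the proper birational morphism $\pi_{\mu;\nu}\colon \widetilde{\cF_{\mu;\nu}} \to \overline{\cO_{\mu;\nu}}$ once $\overline{\cO_{\mu;\nu}}$ is known to be unibranch at $(v,x)$. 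The main obstacle is precisely this last unibranchedness/connectedness input; the remaining assertions are bookkeeping consequences of Theorems \ref{thm:paving} and \ref{thm:semismall}.
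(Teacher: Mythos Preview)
Your treatment of parts (1)--(3) matches the paper's proof. The one genuine gap is in part (4), at the assertion $\Pi^{\rho;\sigma}_{\mu;\nu}(0)=\IC^{\rho;\sigma}_{\mu;\nu}(0)=1$ for $(\rho;\sigma)<(\mu;\nu)$.

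You correctly deduce from part (3) that $\Pi^{\rho;\sigma}_{\mu;\nu}(0)=\IC^{\rho;\sigma}_{\mu;\nu}(0)$, but you then attempt to pin down the common value as $1$ by proving $\Pi^{\rho;\sigma}_{\mu;\nu}(0)=1$ via connectedness of the fibre, which in turn you reduce to an unproved unibranchedness statement for $\overline{\cO_{\mu;\nu}}$. The paper goes in the opposite direction: it is $\IC^{\rho;\sigma}_{\mu;\nu}(0)=1$ that is automatic. Indeed, $\IC^{\rho;\sigma}_{\mu;\nu}(0)=\dim\cH^0_{(v,x)}\IC(\overline{\cO_{\mu;\nu}},\Qlb)$, and for the (unshifted) intersection cohomology complex on an irreducible variety one always has $\cH^0\IC(\overline{\cO_{\mu;\nu}},\Qlb)\cong\Qlb_{\overline{\cO_{\mu;\nu}}}$; this is immediate from Deligne's construction, since $R^0 j_*\Qlb$ is the constant sheaf when the open stratum is dense in an irreducible space. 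With $\IC^{\rho;\sigma}_{\mu;\nu}(0)=1$ in hand, $\Pi^{\rho;\sigma}_{\mu;\nu}(0)=1$ drops out of part (3), and connectedness of the fibre becomes a \emph{consequence} rather than an input. So your ``main obstacle'' disappears once you verify the other side of the equality.
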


\begin{proof}
For any variety $X$ with an affine paving, the
long exact sequence in cohomology with compact supports
shows that $H_c^i(X,\Qlb)=0$ for $i$ odd,
and that $\dim H_c^{2i}(X,\Qlb)$ is the number of spaces in the paving
which have dimension $i$. So part (1) is a consequence of Theorem \ref{thm:paving},
and the observation that the paving constructed in the proof does not
depend on the field $\F$. Parts (2) and (3) follow from part (1) 
via Theorem \ref{thm:semismall}(3). The only statement in part (4) which is
not automatic is that $\Pi^{\rho;\sigma}_{\mu;\nu}(0)=1$ if
$(\rho;\sigma)<(\mu;\nu)$, which is equivalent to saying that
the fibre $\pi_{\mu;\nu}^{-1}(v,x)$ is connected for all $(v,x)\in\cO_{\rho;\sigma}$.
This follows from part (3).
\end{proof}

\begin{exam}
Let $n=4$, and take $(v,x)\in\cO_{(1^3);(1)}$.
We will describe the affine paving of 
\[
\begin{split} 
\pi_{(3);(1)}^{-1}(v,x)=\{0=V_0\subset V_1&\subset V_2\subset V_3\subset V_4=V\,|\,\\
&\dim V_k=k,\, v\in V_3,\, x(V_k)\subseteq V_{k-1}\text{ for }1\leq k\leq 4\}
\end{split}
\]
given by the proof of Theorem \ref{thm:paving}. The nonempty pieces
$\pi_{(3);(1)}^{-1}(v,x)\cap(\cF_{(3);(1)})_{(d_k)}$ in this case are exactly
\[
\begin{split}
&\{(V_k)\in\pi_{(3);(1)}^{-1}(v,x)\,|\,v\in V_1\},\\
&\{(V_k)\in\pi_{(3);(1)}^{-1}(v,x)\,|\,v\in V_2,\,v\notin V_1\},\text{ and}\\
&\{(V_k)\in\pi_{(3);(1)}^{-1}(v,x)\,|\,v\in V_3,\,v\notin V_2\}.
\end{split}
\]
The first of these pieces is isomorphic via the appropriate map $\Psi$ to
\[ \{0=W_0=W_1\subset W_2\subset W_3\subset V/\F v\,|\,x(W_k)\subseteq W_{k-1}\}, \]
which is a Springer fibre of type $(21)$ (that is, a union of two projective lines
intersecting at a point). For the second of these pieces, the map $\Psi$ is
an $\Aa^1$-bundle with base
\[ \{0=W_0\subset W_1=W_2\subset W_3\subset V/\F v\,|\,x(W_k)\subseteq W_{k-1}\}, \]
which is another Springer fibre of type $(21)$. For the third piece, the image of
$\Psi$ is
\[ 
Y=\{0=W_0\subset W_1\subset W_2=W_3\subset V/\F v\,|\,x(W_k)\subseteq W_{k-1}\}, 
\]
which is another Springer fibre of type $(21)$. According to the proof of 
Theorem \ref{thm:paving}, we should partition $Y$
into
\[ 
Y'=\{(W_k)\in Y\,|\,x(W_2)=0\}\text{ and }Y''=\{(W_k)\in Y\,|\,x(W_2)\neq 0\}.
\]
We have $Y'\cong\Pp^1$ and $Y''\cong\Aa^1$; the restriction of $\Psi$ to
$\Psi^{-1}(Y')$ is an $\Aa^2$-bundle, while the restriction of $\Psi$ to
$\Psi^{-1}(Y'')$ is an $\Aa^1$-bundle. Thus we have
\[ \Pi^{(1^3);(1)}_{(3);(1)}(t)
=(2t+1)+t(2t+1)+t^2(t+1)+t^2=t^3+4t^2+3t+1. \]
A similar but easier calculation shows that
$\Pi^{(1^3);(1)}_{(21);(1)}(t)
=t^2+2t+1$.
Using Corollary \ref{cor:oddzero}(3), we deduce that
$\IC^{(1^3);(1)}_{(21);(1)}(t)=2t+1$ and
$\IC^{(1^3);(1)}_{(3);(1)}(t)=t+1$.
\end{exam}
\section{Intersection Cohomology and Kostka Polynomials}
\label{sect:kostka}
A famous theorem of Lusztig relates the intersection cohomology of
ordinary nilpotent orbit closures of type A to Kostka polynomials
(and hence to the representation theory of the symmetric group).
Let $\tK_{\lambda\pi}(t)=t^{n(\pi)}K_{\lambda\pi}(t^{-1})$ denote the
(modified) Kostka polynomial -- see \cite[III.\S6--7]{macdonald}.
In the notation defined in Corollary \ref{cor:oddzero}, Lusztig's result
\cite[Theorem 2]{lusztig:greenpolys} becomes:
\begin{thm} \label{thm:lusztig}
For $\pi,\lambda\in\cP_n$, 
$t^{n(\lambda)}\IC^{\varnothing;\pi}_{\varnothing;\lambda}(t)=\tK_{\lambda\pi}(t)$.
\end{thm}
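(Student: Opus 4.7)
The plan is to reduce Theorem \ref{thm:lusztig} directly to Lusztig's original computation of the local intersection cohomology of type-A nilpotent orbit closures. By Lemma \ref{lem:easy}(2), the orbit $\cO_{\varnothing;\lambda}$ equals $\{0\}\times\cO_\lambda$, and taking closures gives $\overline{\cO_{\varnothing;\lambda}} = \{0\}\times\overline{\cO_\lambda}$. Under the projection $V\times\cN \to \cN$, this closed subvariety maps $G$-equivariantly and isomorphically onto $\overline{\cO_\lambda}$. Consequently, the IC complex $\IC(\overline{\cO_{\varnothing;\lambda}},\Qlb)$ corresponds under this isomorphism to $\IC(\overline{\cO_\lambda},\Qlb)$, and for any $(0,x)$ with $x\in\cO_\pi$ we obtain canonical isomorphisms of stalks
\[ \cH^i_{(0,x)}\IC(\overline{\cO_{\varnothing;\lambda}},\Qlb) \cong \cH^i_x\IC(\overline{\cO_\lambda},\Qlb) \]
for every $i$.

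From this identification and the definition of $\IC^{\varnothing;\pi}_{\varnothing;\lambda}(t)$ given in Corollary \ref{cor:oddzero}(2), we have
\[ t^{n(\lambda)}\IC^{\varnothing;\pi}_{\varnothing;\lambda}(t) = t^{n(\lambda)}\sum_i \dim\cH^{2i}_x\IC(\overline{\cO_\lambda},\Qlb)\,t^i. \]
The right-hand side is precisely the quantity computed by Lusztig in \cite{lusztig:greenpolys}: Theorem 2 of that paper asserts that it equals the modified Kostka polynomial $\tK_{\lambda\pi}(t) = t^{n(\pi)}K_{\lambda\pi}(t^{-1})$. Substituting gives the desired formula.

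The proof is therefore essentially formal once the identification of orbit closures is noted, and the only real work lies in bookkeeping. Specifically, one must verify that the perverse normalization of $\IC$ adopted here, the indexing convention distinguishing $K_{\lambda\pi}$ from $K_{\pi\lambda}$, and the degree shift by $t^{n(\lambda)}$ all agree with the conventions of \cite{lusztig:greenpolys} (or the equivalent formulation found in \cite[III.\S6--7]{macdonald}). No input beyond Lusztig's theorem and the trivial identification of $\overline{\cO_{\varnothing;\lambda}}$ is required; in particular, the resolution $\pi_{\varnothing;\lambda}$ and the fibre-based machinery of Section \ref{sect:fibres} are not needed here, though as a sanity check one may observe that $\pi_{\varnothing;\lambda}$ reduces, under the isomorphism above, to the Springer-type resolution $\psi_{\varnothing;\lambda}$ used by Lusztig.
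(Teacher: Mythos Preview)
Your proposal is correct and matches the paper's approach exactly: the paper does not give a separate proof of this theorem but simply presents it as a restatement of Lusztig's result \cite[Theorem~2]{lusztig:greenpolys} in the notation of Corollary~\ref{cor:oddzero}, which is precisely the identification you spell out. Your added remarks about the isomorphism $\overline{\cO_{\varnothing;\lambda}}\cong\overline{\cO_\lambda}$ and the bookkeeping of conventions are the natural (and only) content needed to justify this restatement.
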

\noindent
Note that this also implies
\begin{equation}
t^{n(\lambda)}\IC^{\rho;\sigma}_{\lambda;\varnothing}(t)=\tK_{\lambda,\rho+\sigma}(t),
\end{equation}
since $\IC(V\times\overline{\cO_\lambda},\Qlb)\cong(\Qlb)_V\boxtimes
\IC(\overline{\cO_\lambda},\Qlb)$.

In \cite{shoji:green} and \cite{shoji:limit},
Shoji has defined Kostka polynomials 
$\tK_{(\mu;\nu),(\rho;\sigma)}(t)\in\Z[t]$
which are indexed by pairs of bipartitions
rather than pairs of partitions (see especially \cite[Proposition 3.3]{shoji:limit},
where it is proved that these apparently rational functions are indeed polynomials).
The aim of this section is to prove the following analogue of
Theorem \ref{thm:lusztig}:
\begin{thm} \label{thm:main}
For $(\rho;\sigma),(\mu;\nu)\in\cQ_n$,
$t^{b(\mu;\nu)}\IC^{\rho;\sigma}_{\mu;\nu}(t^2)=\tK_{(\mu;\nu),(\rho;\sigma)}(t)$.
\end{thm}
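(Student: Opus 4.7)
The plan is to prove the identity by exhibiting, on both the geometric and the combinatorial side, the same unitriangular linear system with the same ``Green polynomial'' right-hand side, and then inverting it. Unitriangularity is already available for the geometric IC polynomials via Corollary~\ref{cor:oddzero}; the task is to produce an analogous system on Shoji's side and to match the right-hand sides.

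First I would recast the triangular identity of Corollary~\ref{cor:oddzero}(3), after substituting $t\mapsto t^2$ and multiplying through by $t^{b(\mu;\nu)}$, as a relation among the quantities $t^{b(\mu;\nu)}\Pi^{\tau;\upsilon}_{\mu;\nu}(t^2)$ and $t^{b(\rho;\sigma)}\IC^{\tau;\upsilon}_{\rho;\sigma}(t^2)$, with coefficients $K_{\rho^\bt\mu^\bt}K_{\sigma^\bt\nu^\bt}\,t^{e(\rho,\sigma,\mu,\nu)}$ for an explicit exponent $e$ assembled from $b(\mu;\nu)-b(\rho;\sigma)$ and $2(n(\rho+\sigma)-n(\mu+\nu))$. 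Then I would extract from \cite{shoji:green,shoji:limit} a ``Green polynomial'' $Q^{\tau;\upsilon}_{\mu;\nu}(t)$ together with an analogous triangular relation
\[
Q^{\tau;\upsilon}_{\mu;\nu}(t)=\sum_{\rho\le\mu,\,\sigma\le\nu}K_{\rho^\bt\mu^\bt}K_{\sigma^\bt\nu^\bt}\,t^{e'(\rho,\sigma,\mu,\nu)}\,\tK_{(\rho;\sigma),(\tau;\upsilon)}(t),
\]
and I would verify that $e=e'$ by a direct comparison of exponents.

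The heart of the proof is then to show that the two Green polynomials agree:
\[
t^{b(\mu;\nu)}\Pi^{\tau;\upsilon}_{\mu;\nu}(t^2)=Q^{\tau;\upsilon}_{\mu;\nu}(t).
\]
For this I would use Proposition~\ref{prop:bimodule} to factor $\pi_{\mu;\nu}$ through $\cG_{|\mu|,|\nu|}$, and stratify the fibre $\pi_{|\mu|,|\nu|}^{-1}(v,x)$ by the $\GL$-orbit types of $(x|_W,\,x|_{V/W})$ as $W$ ranges over $|\mu|$-dimensional $x$-stable subspaces of $V$ containing $v$. Theorem~\ref{thm:paving} (and a stratum-wise refinement of its proof) shows that each stratum is affinely paved, while Proposition~\ref{prop:bimodule} reduces the cohomological contribution of any stratum labelled by $(\alpha;\beta)$ to a product of two ordinary type-A IC stalks on $\overline{\cO_\alpha}\times\overline{\cO_\beta}$. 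Lusztig's Theorem~\ref{thm:lusztig} converts each factor into an ordinary modified Kostka polynomial, producing a sum that I would identify term by term with Shoji's combinatorial formula for $Q^{\tau;\upsilon}_{\mu;\nu}(t)$.

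With the two unitriangular systems sharing the same right-hand side, inversion yields $t^{b(\mu;\nu)}\IC^{\rho;\sigma}_{\mu;\nu}(t^2)=\tK_{(\mu;\nu),(\rho;\sigma)}(t)$. The main obstacle lies in the Green-polynomial matching: one has to pin down Shoji's normalisations closely enough that $e'$ really equals $e$, and confirm that the affine-paving contributions from the geometric strata assemble into exactly the combinatorial sum defining Shoji's $Q^{\tau;\upsilon}_{\mu;\nu}(t)$. Once this bookkeeping is in hand, the rest is forced: Proposition~\ref{prop:bimodule} supplies the factorisation into two type-A pieces, Lusztig's theorem handles each piece, and unitriangularity does the rest.
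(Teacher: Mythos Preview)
Your plan diverges from the paper's argument, and it has a real gap at the step you flag as ``bookkeeping''.

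The paper does not try to match Green polynomials through a linear triangular system. Shoji's $\tK_{(\mu;\nu),(\rho;\sigma)}(t)$ are \emph{defined} (Theorem~\ref{thm:ls}) as the entries of the unique matrix $P$ satisfying the quadratic relation $P\Lambda P^\bt=\Omega$, where $\Omega$ is built from fake degrees of $W_n$. The paper proves Theorem~\ref{thm:main} by showing that the matrix $\bigl(t^{b(\mu;\nu)}\IC^{\rho;\sigma}_{\mu;\nu}(t^2)\bigr)$ satisfies the same equation, with $\lambda_{(\tau;\upsilon)}=|\cO_{\tau;\upsilon}(\F_q)|$: Proposition~\ref{prop:reps} rewrites the right-hand side $\omega_{(\mu;\nu),(\mu';\nu')}$ as an explicit double-coset sum, and Proposition~\ref{prop:fq} obtains the identical sum by counting $\F_q$-points of the ``Steinberg-type'' variety of pairs of partial flags over a common $(v,x)$. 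Uniqueness in the Lusztig--Shoji algorithm finishes it.

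Your approach instead assumes you can extract from \cite{shoji:green,shoji:limit} a Green polynomial $Q^{\tau;\upsilon}_{\mu;\nu}(t)$ together with a linear relation $Q=(\text{Kostka numbers})\cdot\tK$ whose right-hand side you can then identify with $t^{b(\mu;\nu)}\Pi^{\tau;\upsilon}_{\mu;\nu}(t^2)$. But no such independent formula for $Q$ is available there: Shoji's Green functions are themselves produced by the $P\Lambda P^\bt=\Omega$ algorithm, so you have no combinatorial handle on $Q$ separate from $\tK$. Your proposed stratification of $\pi_{\mu;\nu}^{-1}(v,x)$ by the Jordan types of $x|_W$ and $x|_{V/W}$ would express $\Pi$ in terms of the generalized Hall polynomials $g^{\tau;\upsilon}_{\theta;\psi}$ and ordinary Kostka polynomials (this is exactly what the paper does \emph{after} Theorem~\ref{thm:main}, to deduce that the $g$'s are polynomials). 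Matching that expression with anything on Shoji's side would require a combinatorial identity relating $g^{\tau;\upsilon}_{\theta;\psi}$ to $W_n$-representation theory, and no such identity is on hand. The ``term-by-term'' identification you describe is therefore not bookkeeping but the whole theorem; the paper's orthogonality-and-point-counting argument is what replaces it.
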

\noindent
Theorem \ref{thm:main} immediately implies the following properties of 
Shoji's polynomials, not proved in \cite{shoji:limit}:
\begin{cor} \label{cor:shoji}
\begin{enumerate}
\item The coefficients in $\tK_{(\mu;\nu),(\rho;\sigma)}(t)$ are all nonnegative,
and only powers of $t$ which are congruent to $b(\mu;\nu)$ modulo $2$ occur.
\item
$\tK_{(\varnothing;\lambda),(\varnothing;\pi)}(t)=t^{|\lambda|}\tK_{\lambda\pi}(t^2)$,
and $\tK_{(\lambda;\varnothing),(\rho;\sigma)}(t)=\tK_{\lambda,\rho+\sigma}(t^2)$.
\end{enumerate}
\end{cor}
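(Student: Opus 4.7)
The plan is to deduce both parts of Corollary \ref{cor:shoji} directly from Theorem \ref{thm:main}, together with facts already established earlier in the paper. The whole argument should be essentially a computation, once the right substitutions are made.

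For part (1), I would simply observe that by Corollary \ref{cor:oddzero}(2), the polynomial $\IC^{\rho;\sigma}_{\mu;\nu}(t)$ lies in $\N[t]$. Hence its image $\IC^{\rho;\sigma}_{\mu;\nu}(t^2)$ has only even powers of $t$, all with nonnegative coefficients. Multiplying by $t^{b(\mu;\nu)}$ and applying Theorem \ref{thm:main}, every monomial in $\tK_{(\mu;\nu),(\rho;\sigma)}(t)$ has nonnegative integer coefficient and exponent congruent to $b(\mu;\nu)\pmod 2$, which is exactly the claim.

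For part (2), the first identity is obtained by plugging in $\mu=\varnothing$: the definition gives $b(\varnothing;\lambda)=2n(\lambda)+|\lambda|$, so Theorem \ref{thm:main} yields
\[
\tK_{(\varnothing;\lambda),(\varnothing;\pi)}(t)=t^{2n(\lambda)+|\lambda|}\,\IC^{\varnothing;\pi}_{\varnothing;\lambda}(t^2).
\]
Theorem \ref{thm:lusztig} rewrites $\IC^{\varnothing;\pi}_{\varnothing;\lambda}(u)=u^{-n(\lambda)}\tK_{\lambda\pi}(u)$; substituting $u=t^2$ and simplifying gives $t^{|\lambda|}\tK_{\lambda\pi}(t^2)$, as required. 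The second identity is analogous: with $\nu=\varnothing$ we have $b(\lambda;\varnothing)=2n(\lambda)$, so
\[
\tK_{(\lambda;\varnothing),(\rho;\sigma)}(t)=t^{2n(\lambda)}\,\IC^{\rho;\sigma}_{\lambda;\varnothing}(t^2),
\]
and then the displayed consequence of Theorem \ref{thm:lusztig} following its statement, namely $t^{n(\lambda)}\IC^{\rho;\sigma}_{\lambda;\varnothing}(t)=\tK_{\lambda,\rho+\sigma}(t)$, collapses the right-hand side to $\tK_{\lambda,\rho+\sigma}(t^2)$.

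There is no real obstacle here: the whole corollary is a bookkeeping consequence of Theorem \ref{thm:main}, the nonnegativity and parity statement of Corollary \ref{cor:oddzero}, and the type-A identification in Theorem \ref{thm:lusztig}. The only thing to be careful about is tracking the exponents $b(\varnothing;\lambda)$ and $b(\lambda;\varnothing)$ correctly from the definition, and remembering that the substitution $t\mapsto t^2$ doubles all exponents so that $t^{n(\lambda)}$ becomes $t^{2n(\lambda)}$ and cancels the prefactor on the nose.
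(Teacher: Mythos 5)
Your proposal is correct and follows exactly the route the paper intends: the paper simply asserts that the corollary is an immediate consequence of Theorem \ref{thm:main}, and your computation supplies precisely the missing bookkeeping, using Corollary \ref{cor:oddzero}(2) for the nonnegativity and parity in part (1), and Theorem \ref{thm:lusztig} together with the displayed identity $t^{n(\lambda)}\IC^{\rho;\sigma}_{\lambda;\varnothing}(t)=\tK_{\lambda,\rho+\sigma}(t)$ for part (2). The exponent tracking ($b(\varnothing;\lambda)=2n(\lambda)+|\lambda|$ and $b(\lambda;\varnothing)=2n(\lambda)$) is accurate.
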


The defining property of Shoji's polynomials involves the representation theory
of the Coxeter group $W_n=W(\B_n)$. Recall that the set of irreducible characters
$\Irr(W_n)$ is naturally in bijection with $\cQ_n$ (see \cite[I.B.\S9]{macdonald}
or \cite[5.5]{gp}); we write $\chi^{\mu;\nu}$ for the character labelled by
$(\mu;\nu)$. The \emph{fake degree} $R(\chi)$ of a character $\chi$ of $W_n$
(not necessarily irreducible) is defined by
\begin{equation} 
R(\chi)=\frac{1}{2^n n!}\sum_{w\in W_n}\frac{\chi(w)\epsilon(w)
\prod_{a=1}^n (t^{2a}-1)}{\det(t-w)},
\end{equation}
where $\epsilon$ denotes the sign character of $W_n$ and $\det$ means the
determinant of the reflection representation. This fake degree is known to be a
nonnegative polynomial in the indeterminate $t$, because 
\cite[Proposition 11.1.1]{carter} implies that
\begin{equation} \label{eqn:fake}
R(\chi)=\sum_{i=0}^{n^2}\langle C^i(W_n),\chi\rangle_{W_n}\,t^i,
\end{equation}
where $C^i(W_n)$ is (the character of) the degree-$i$ homogeneous component
of the coinvariant algebra of $W_n$. We define a square matrix
\[ \Omega=(\omega_{(\mu;\nu),(\mu';\nu')})_{(\mu;\nu),(\mu';\nu')\in\cQ_n}
\text{ by }
\omega_{(\mu;\nu),(\mu';\nu')}=t^{n^2}R(\chi^{\mu;\nu}\otimes\chi^{\mu';\nu'}
\otimes\epsilon). \]
Then Shoji has proved (see \cite[Theorem 5.4]{shoji:green} 
and \cite[Remark 3.2]{shoji:limit}):
\begin{thm}\label{thm:ls}
There are unique
matrices $P = (p_{(\mu;\nu),(\rho;\sigma)})$ and 
$\Lambda =(\lambda_{(\rho;\sigma),(\tau;\upsilon)})$ over 
$\Q(t)$ satisfying the equation
$P \Lambda P^\bt = \Omega$
and subject to the following additional conditions:
\begin{equation*}
p_{(\mu;\nu),(\rho;\sigma)} = 
\begin{cases}
0 & \text{if $(\rho;\sigma)\not\leq(\mu;\nu)$,} \\
t^{b(\mu;\nu)} & \text{if $(\rho;\sigma)=(\mu;\nu)$,} 
\end{cases}
\qquad
\lambda_{(\rho;\sigma),(\tau;\upsilon)} = 0\quad
\text{if $(\rho;\sigma)\neq(\tau;\upsilon)$.}
\end{equation*}
The entry $p_{(\mu;\nu),(\rho;\sigma)}$ of the unique $P$
is $\tK_{(\mu;\nu),(\rho;\sigma)}(t)$.
\end{thm}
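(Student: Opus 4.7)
The plan is to prove existence and uniqueness of the decomposition $P \Lambda P^\bt = \Omega$ by a direct triangular-solve algorithm, and then to observe that the identification $p_{(\mu;\nu),(\rho;\sigma)} = \tK_{(\mu;\nu),(\rho;\sigma)}(t)$ is essentially the defining property of Shoji's polynomials in \cite{shoji:green}.

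First, I fix a total order $\prec$ on $\cQ_n$ refining $\leq$. With respect to this order $P$ is required to be lower triangular with prescribed diagonal $p_{(\mu;\nu),(\mu;\nu)} = t^{b(\mu;\nu)}$, and $\Lambda$ is diagonal; abbreviate its $((\tau;\upsilon),(\tau;\upsilon))$-entry as $\lambda_{(\tau;\upsilon)}$. Expanding the matrix equation in entry $((\mu;\nu),(\rho;\sigma))$ with $(\rho;\sigma) \preceq (\mu;\nu)$ gives
\begin{equation*}
\sum_{(\tau;\upsilon) \preceq (\rho;\sigma)} p_{(\mu;\nu),(\tau;\upsilon)}\, \lambda_{(\tau;\upsilon)}\, p_{(\rho;\sigma),(\tau;\upsilon)} = \omega_{(\mu;\nu),(\rho;\sigma)}.
\end{equation*}
I process the equations in $\prec$-lex order (first on $(\mu;\nu)$, then on $(\rho;\sigma)$), and observe that each equation contains exactly one new unknown: when $(\rho;\sigma) \prec (\mu;\nu)$ this is $p_{(\mu;\nu),(\rho;\sigma)}$, whose coefficient is $\lambda_{(\rho;\sigma)} \cdot t^{b(\rho;\sigma)}$; when $(\rho;\sigma) = (\mu;\nu)$ it is $\lambda_{(\mu;\nu)}$, whose coefficient is $t^{2b(\mu;\nu)} \neq 0$. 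Provided each previously computed $\lambda_{(\rho;\sigma)}$ is nonzero, the recursion produces $P$ and $\Lambda$ in $\Q(t)$ uniquely; conversely any valid pair must satisfy the same recursion, which yields uniqueness.

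The main obstacle is precisely to verify $\lambda_{(\mu;\nu)} \neq 0$ at every stage, so that the divisions in the off-diagonal steps are legitimate. By induction, this reduces to the single global statement $\det(\Omega) \neq 0$ in $\Q(t)$, since $\det(P)^2 \det(\Lambda) = \det(\Omega)$ and $\det(P) = \prod t^{b(\mu;\nu)} \neq 0$. Using \eqref{eqn:fake}, I rewrite the sum over $W_n$ defining $R$ as a sum over conjugacy classes, obtaining a factorization $\Omega = t^{n^2} X D X^\bt$, where $X = (\chi^{\mu;\nu}(C))_{(\mu;\nu),C}$ is the character table of $W_n$ and $D$ is a diagonal matrix with entries
\begin{equation*}
\frac{|C|}{|W_n|} \cdot \epsilon(C) \cdot \frac{\prod_{a=1}^n (t^{2a} - 1)}{\det(t - w_C)}
\end{equation*}
indexed by conjugacy classes $C$ with representative $w_C$. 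Now $\det(X) \neq 0$ by linear independence of irreducible characters, and $\det(D)$ is a product of nonzero rational functions in $t$, so $\det(\Omega) \neq 0$ as desired.

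Finally, the polynomial $\tK_{(\mu;\nu),(\rho;\sigma)}(t)$ is \emph{defined} in \cite{shoji:green} to be the $((\mu;\nu),(\rho;\sigma))$-entry of the unique $P$-matrix just constructed, so the second assertion of the theorem is immediate from the construction. The substantively deeper fact, which I would simply invoke rather than reprove here, is that these rational function entries actually lie in $\Z[t] \subset \Q(t)$; this is \cite[Proposition 3.3]{shoji:limit}.
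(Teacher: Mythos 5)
Your uniqueness argument is essentially sound, and it is worth noting that uniqueness is the only part of this theorem the paper actually needs (it is what drives the proof of Theorem~\ref{thm:main}); the paper itself does not reprove the theorem but attributes it to Shoji, citing \cite[Theorem 5.4]{shoji:green}, \cite[Remark 3.2]{shoji:limit} and the generalized Lusztig--Shoji algorithm of \cite{gm}. Your factorization $\Omega=t^{n^2}XDX^\bt$ and the conclusion $\det\Omega\neq 0$ are correct, except that the factor $\epsilon(C)$ should not appear in $D$: the $\epsilon$ inside $R$ cancels against the $\epsilon$ in $\chi^{\mu;\nu}\otimes\chi^{\mu';\nu'}\otimes\epsilon$ since $\epsilon(w)^2=1$. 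Given $\det\Omega\neq 0$, any valid pair $(P,\Lambda)$ has $\det\Lambda\neq 0$, so all $\lambda_{(\tau;\upsilon)}$ are nonzero and your recursion pins the pair down; that part stands.

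The existence half has a genuine gap, in two places. First, the reduction of ``each intermediate $\lambda$ is nonzero'' to $\det\Omega\neq 0$ is not valid: the identity $\det(P)^2\det(\Lambda)=\det(\Omega)$ is only available after the whole decomposition exists, and what the recursion actually requires is that every leading principal minor of $\Omega$ with respect to $\prec$ be nonzero (one has $\det(\Omega_{\preceq k})=\det(P_{\preceq k})^2\prod_{j\preceq k}\lambda_j$ at each stage). A nonsingular symmetric matrix need not admit such a decomposition at all --- compare $\left(\begin{smallmatrix}0&1\\1&0\end{smallmatrix}\right)$ --- and Cauchy--Binet applied to $XDX^\bt$ does not rule out vanishing of intermediate minors. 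This particular gap is repairable: by \eqref{eqn:fake} and the fact that $C^{n^2}(W_n)=\epsilon$, the coefficient of $t^{2n^2}$ in $\omega_{(\mu;\nu),(\mu';\nu')}$ is $\langle\chi^{\mu;\nu},\chi^{\mu';\nu'}\rangle_{W_n}$, so $\Omega=t^{2n^2}I+(\text{lower order})$ and every principal minor is monic of top degree, hence nonzero. The second problem is not repairable by these means: the theorem demands $p_{(\mu;\nu),(\rho;\sigma)}=0$ whenever $(\rho;\sigma)\not\leq(\mu;\nu)$, and $\leq$ is not a total order on $\cQ_n$, so the system is overdetermined relative to the unknowns the theorem allows. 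Your triangular solve only produces a $P$ that is lower triangular for the chosen refinement $\prec$; it gives no reason why the entries at positions that are $\prec$-smaller but $\leq$-incomparable come out to be zero, nor why the answer is independent of the refinement. That vanishing is exactly where the content of Shoji's existence statement lies and it depends on the specific structure of $\Omega$, not on the shape of the algorithm. To complete the proof you must either establish this support property directly or, as the paper does, quote Shoji for existence and retain your argument only for uniqueness (and its extension to overfields of $\Q(t)$).
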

\noindent
The proof consists primarily of an algorithm for computing $P$ and $\Lambda$,
a special case of the ``generalized Lusztig--Shoji algorithm'' 
(see~\cite[Proposition~2.2]{gm} and the discussion in \cite[Section 2]{aa}).
Consequently, the uniqueness
statement still holds if the matrices $P$ and $\Lambda$ are assumed
to have entries in an extension field $K$ of $\Q(t)$. So to prove 
Theorem \ref{thm:main}, it suffices to find such a field $K$ and elements 
$\lambda_{(\tau;\upsilon)}\in K$, $(\tau;\upsilon)\in\cQ_n$, such that
\begin{equation} \label{eqn:innerprod}
\sum_{(\tau;\upsilon)\in\cQ_n}\lambda_{(\tau;\upsilon)}t^{b(\mu;\nu)+b(\mu';\nu')}
\IC^{\tau;\upsilon}_{\mu;\nu}(t^2)\IC^{\tau;\upsilon}_{\mu';\nu'}(t^2)
=\omega_{(\mu;\nu),(\mu';\nu')},
\end{equation}
for all $(\mu;\nu),(\mu';\nu')\in\cQ_n$. (It then follows
that in fact $\lambda_{(\tau;\upsilon)}\in\Q(t)$.)
Using Corollary \ref{cor:oddzero}(3), we see that \eqref{eqn:innerprod} is 
equivalent to
\begin{equation} \label{eqn:target}
\begin{split}
\sum_{(\tau;\upsilon)\in\cQ_n}&\lambda_{(\tau;\upsilon)}
\Pi^{\tau;\upsilon}_{\mu;\nu}(t^2)\Pi^{\tau;\upsilon}_{\mu';\nu'}(t^2)\\
&=t^{-b(\mu;\nu)-b(\mu';\nu')}
\sum_{\substack{\rho\leq\mu\\\rho'\leq\mu'\\\sigma\leq\nu\\\sigma'\leq\nu'}}
K_{\rho^\bt\mu^\bt}K_{\sigma^\bt\nu^\bt}
K_{(\rho')^\bt(\mu')^\bt}K_{(\sigma')^\bt(\nu')^\bt}\,
\omega_{(\rho;\sigma),(\rho';\sigma')},
\end{split}
\end{equation}
for all $(\mu;\nu),(\mu';\nu')\in\cQ_n$, and this is the form we will prove.

We first want to simplify the right-hand side. Interpret $W_n$ as the group of
permutations of $\{\pm 1,\pm 2,\cdots,\pm n\}$ which commute with 
$i\leftrightarrow -i$. For any composition
$n_1,n_2,\cdots,n_k$ of $n$, let $W_{(n_i)}$ denote the subgroup
$W_{n_1}\times W_{n_2}\times \cdots \times W_{n_k}$ of $W_n$, the preimage
of the Young subgroup $S_{(n_i)}=S_{n_1}\times S_{n_2}\times \cdots \times S_{n_k}$
under the natural homomorphism $W_n\to S_n$. Given two compositions
$(n_i)_{i=1}^k$ and $(n_j')_{j=1}^{k'}$, the double cosets
$W_{(n_j')}\!\setminus\! W_n\,/\,W_{(n_i)}$ are clearly in bijection with the
double cosets $S_{(n_j')}\!\setminus\! S_n\,/\,S_{(n_i)}$. These in turn
(see \cite[\S2]{hend:icnilp}, for example) are in bijection with
$M_{(n_i),(n_j')}$, the set of $k\times k'$ matrices $(m_{ij})$ satisfying:
\begin{enumerate}
\item $m_{ij}\in\N$, for all $i,j$,
\item $\sum_{j}m_{ij}=n_i$, for all $i$,\text{ and } 
\item $\sum_{i}m_{ij}=n_j'$, for all $j$.
\end{enumerate}
Write $m_{\leq i,\leq j}$ for $\sum_{i'\leq i,j'\leq j}m_{i'j'}$,
and similarly define $m_{<i,<j}$ and $m_{<i,>j}$.
The bijection $S_{(n_j')}\!\setminus\! S_n\,/\,S_{(n_i)}\leftrightarrow 
M_{(n_i),(n_j')}$
is such that the double coset containing $w$ corresponds to the matrix $(m_{ij})$
which satisfies
\begin{equation}
m_{\leq i,\leq j}=
|\{s\leq n_1+\cdots+n_i\,|\,
w(s)\leq n_1'+\cdots+n_j'\}|.
\end{equation}
Given a bipartition $(\mu;\nu)\in\cQ_n$, we let 
$W_{(\mu;\nu)}$ be the subgroup of $W_n$ determined by
the composition \eqref{eqn:composition}; 
given a second bipartition $(\mu';\nu')$, write 
$M_{(\mu;\nu),(\mu';\nu')}$ for the set of matrices determined by the two
compositions.
\begin{prop} \label{prop:reps}
The right-hand side of \eqref{eqn:target} equals
\[
\sum_{(m_{ij})\in M_{(\mu;\nu),(\mu';\nu')}}
\frac{t^{2(\binom{n}{2}-n(\mu+\nu)-n(\mu'+\nu')+\sum_{i,j}\binom{m_{ij}}{2}
+m_{\leq\mu_1,\leq\mu_1'})}
\prod_{a=1}^n(t^{2a}-1)}
{\prod_{i,j} \prod_{a=1}^{m_{ij}}(t^{2a}-1)}.
\]
\end{prop}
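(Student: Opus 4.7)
The plan is to recognize the right-hand side of \eqref{eqn:target} as the fake degree of a tensor product of parabolically induced characters of $W_n$, and then to compute it by a Mackey decomposition. First, the classical symmetric-group identity $\sum_{\rho\leq\mu}K_{\rho^\bt\mu^\bt}\chi^\rho=\Ind_{S_{\mu^\bt}}^{S_{|\mu|}}\epsilon_{\mu^\bt}$ (where $\epsilon_{\mu^\bt}$ is the sign character of the Young subgroup $S_{\mu^\bt}$) can be pulled back along $W_{|\mu|}\to S_{|\mu|}$ to yield a similar identity in $W_{|\mu|}$, and the analogous identity on the $\nu$-side is obtained by tensoring with $\epsilon_0$. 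Transitivity of induction then identifies
\[
\sum_{\rho,\sigma}K_{\rho^\bt\mu^\bt}K_{\sigma^\bt\nu^\bt}\chi^{\rho;\sigma}=\Ind_{W_{(\mu;\nu)}}^{W_n}\eta_{\mu;\nu},
\]
where $\eta_{\mu;\nu}$ is the one-dimensional character whose restriction to each $\mu$-block factor $W_{\mu^\bt_i}$ is $\tilde\epsilon_1$ (pullback of $S$-sign) and to each $\nu$-block factor is $\tilde\epsilon_1\otimes\epsilon_0$. The right-hand side of \eqref{eqn:target} thus becomes $t^{n^2-b(\mu;\nu)-b(\mu';\nu')}R(\Ind\eta_{\mu;\nu}\otimes\Ind\eta_{\mu';\nu'}\otimes\epsilon)$.

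Second, the Mackey and projection formulae expand this into a sum, indexed by the double cosets $W_{(\mu;\nu)}\backslash W_n/W_{(\mu';\nu')}$, of terms $R(\Ind_{W_\cap}^{W_n}(\eta_{\mu;\nu}|_\cap\otimes\eta_{\mu';\nu'}^d|_\cap\otimes\epsilon|_\cap))$. By the bijection recalled before the statement of the proposition, this sum is indexed by $M_{(\mu;\nu),(\mu';\nu')}$, and for $(m_{ij})$ the intersection $W_{(\mu;\nu)}\cap dW_{(\mu';\nu')}d^{-1}$ is isomorphic to $\prod_{i,j}W_{m_{ij}}$. The class-function expression $R(\Ind_H^{W_n}\phi)=\frac{\prod_a(t^{2a}-1)}{|H|}\sum_{h\in H}\phi(h)\epsilon(h)/\det(t-h)$, applied to $H=\prod W_{m_{ij}}$ and using that $\epsilon|_H$ and $\det(t-h)$ both factor over the product, yields
\[
R\bigl(\Ind_H^{W_n}(\theta\otimes\epsilon|_H)\bigr)=\frac{\prod_{a=1}^n(t^{2a}-1)}{\prod_{i,j}\prod_{a=1}^{m_{ij}}(t^{2a}-1)}\prod_{i,j}R_{W_{m_{ij}}}(\chi_{ij}),
\]
where $\chi_{ij}$ is the $(i,j)$-component of $\theta\otimes\epsilon_{W_n}|_H$ (the extra factor of $\epsilon$ has squared away).

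A direct case check against the four regions determined by the two dividing lines shows that $\chi_{ij}$ is the sign character $\epsilon$ of $W_{m_{ij}}$ when $(i,j)$ is in the top-left or bottom-right quadrant (``$\mu$--$\mu'$'' or ``$\nu$--$\nu'$'') and is $\tilde\epsilon_1$ otherwise. We then need the fake degrees $R_{W_m}(\epsilon)=t^{m^2}$ (standard) and $R_{W_m}(\tilde\epsilon_1)=t^{m(m-1)}$. The second follows by noting that the $\tilde\epsilon_1$-isotypic component of the coinvariant algebra of $W_m$ sits inside the $(\Z/2)^m$-invariant subalgebra $\C[x_1^2,\ldots,x_m^2]/(\text{invariants})$, and within this the $S_m$-sign component is one-dimensional, spanned by the Vandermonde $\prod_{i<j}(x_i^2-x_j^2)$ of degree $m(m-1)$.

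Finally, the exponent in the Proposition is obtained by a bookkeeping check. Writing $A,B,C,D$ for the sums of entries of $(m_{ij})$ in the four quadrants (so $A=m_{\leq\mu_1,\leq\mu_1'}$, and $A+B=|\mu|$, $A+C=|\mu'|$, $C+D=|\nu|$, $B+D=|\nu'|$), together with $b(\mu;\nu)=2n(\mu+\nu)+|\nu|$ and the identity $2\sum_{i,j}\binom{m_{ij}}{2}=\sum m_{ij}^2-n$, the matching of exponents reduces to the immediate equality $2n=2A+B+C+|\nu|+|\nu'|$. The main technical obstacle is the fake-degree computation $R_{W_m}(\tilde\epsilon_1)=t^{m(m-1)}$; once this is in hand, the remaining steps are routine manipulations with induced characters.
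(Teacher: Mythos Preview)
Your argument is correct and follows essentially the same route as the paper's proof: identify the Kostka-weighted sum of irreducible $W_n$-characters as an induced one-dimensional character of $W_{(\mu;\nu)}$, expand the tensor product via Mackey/double cosets (which the paper does by a direct change of variables in the defining sum for $R$), and then evaluate the resulting block contributions using the fake degrees $R_{W_m}(\epsilon)=t^{m^2}$ and $R_{W_m}(\tilde\epsilon_1)=t^{m(m-1)}$. The only cosmetic difference is that the paper absorbs the global $\epsilon$-twist at the outset (working with $\chi^{\sigma^\bt;\rho^\bt}$ and the character $\delta_{\mu;\nu}$), whereas you keep $\epsilon$ explicit and work with $\eta_{\mu;\nu}$; after restriction to the block $W_{m_{ij}}$ these give the same four cases and the same exponent bookkeeping.
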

\begin{proof}
Recall that $\chi^{\rho;\sigma}\otimes\epsilon=\chi^{\sigma^\bt;\rho^\bt}$,
so we have
\[ \omega_{(\rho;\sigma),(\rho';\sigma')}
=t^{n^2}R(\chi^{\sigma^\bt;\rho^\bt}\otimes
\chi^{(\sigma')^\bt;(\rho')^\bt}\otimes\epsilon). \]
Also $\chi^{\sigma^\bt;\rho^\bt}=
\Ind_{W_{|\sigma|}\times W_{|\rho|}}^{W_n}(\chi^{\sigma^\bt}
\boxtimes\delta\chi^{\rho^\bt})$, where $\chi^\lambda$ denotes
the irreducible character of $S_{|\lambda|}$ indexed by $\lambda$ and also
its pull-back to $W_{|\lambda|}$, and $\delta$ is the one-dimensional
character of $W_n$ such that $\delta\epsilon$ is the pull-back of the sign 
character of $S_n$ (and we continue to write $\delta$ for 
its restriction to any subgroup). Using the well-known fact that the Kostka
numbers give the multiplicities of irreducible characters of the symmetric group
in the inductions of the trivial character from Young subgroups, we find
\[
\begin{split}
\sum_{\substack{\rho\leq\mu\\\sigma\leq\nu}}
K_{\rho^\bt\mu^\bt}K_{\sigma^\bt\nu^\bt}\,\chi^{\sigma^\bt;\rho^\bt}
&=\Ind_{W_{|\nu|}\times W_{|\mu|}}^{W_n}\left(
\sum_{\sigma^\bt\geq\nu^\bt}K_{\sigma^\bt\nu^\bt}\chi^{\sigma^\bt}
\boxtimes \delta
\sum_{\rho^\bt\geq\mu^\bt}K_{\rho^\bt\mu^\bt}\chi^{\rho^\bt}
\right)\\
&=\Ind_{W_{|\nu|}\times W_{|\mu|}}^{W_n}\left(
\Ind^{W_{|\nu|}}_{W_{\nu_1^\bt}\times\cdots\times W_{\nu_{\nu_1}^\bt}}(1)
\boxtimes
\Ind^{W_{|\mu|}}_{W_{\mu_1^\bt}\times\cdots\times W_{\mu_{\mu_1}^\bt}}(\delta)
\right)\\
&=\Ind_{W_{\mu;\nu}}^{W_n}(\delta_{\mu;\nu}),
\end{split}
\]
where $\delta_{\mu;\nu}$ is the one-dimensional character of $W_{\mu;\nu}$
which is $\delta$ on all the $W_{\mu_i^\bt}$ components and trivial on all
the $W_{\nu_i^\bt}$ components. So the right-hand side of \eqref{eqn:target}
equals:
\[
\begin{split}
&t^{n^2-b(\mu;\nu)-b(\mu';\nu')}R(\Ind_{W_{\mu;\nu}}^{W_n}(\delta_{\mu;\nu})
\otimes \Ind_{W_{\mu';\nu'}}^{W_n}(\delta_{\mu';\nu'})\otimes\epsilon)\\
&=\frac{t^{n^2-b(\mu;\nu)-b(\mu';\nu')}\prod_{a=1}^n(t^{2a}-1)}{|W_n|}
\sum_{w\in W_n}\frac{\Ind_{W_{\mu;\nu}}^{W_n}(\delta_{\mu;\nu})(w)
\Ind_{W_{\mu';\nu'}}^{W_n}(\delta_{\mu';\nu'})(w)}{\det(t-w)}\\
&=\frac{t^{n^2-b(\mu;\nu)-b(\mu';\nu')}\prod_{a=1}^n(t^{2a}-1)}
{|W_n||W_{\mu;\nu}||W_{\mu';\nu'}|}
\sum_{\substack{w,w_1,w_2\in W_n\\w_1ww_1^{-1}\in W_{\mu;\nu}\\
w_2ww_2^{-1}\in W_{\mu';\nu'}}}
\frac{\delta_{\mu;\nu}(w_1ww_1^{-1})\delta_{\mu';\nu'}(w_2ww_2^{-1})}{\det(t-w)}\\
&=\frac{t^{n^2-b(\mu;\nu)-b(\mu';\nu')}\prod_{a=1}^n(t^{2a}-1)}
{|W_{\mu;\nu}||W_{\mu';\nu'}|}
\sum_{\substack{\tilde{w}\in W_n\\y\in W_{\mu;\nu}\cap
\tilde{w}^{-1}W_{\mu';\nu'}\tilde{w}}}
\frac{\delta_{\mu;\nu}(y)\delta_{\mu';\nu'}(\tilde{w}y\tilde{w}^{-1})}{\det(t-y)},
\end{split}
\]
where the last step uses the change of variables $\tilde{w}=w_2w_1^{-1}$,
$y=w_1ww_1^{-1}$ (and $w_1$ becomes a free variable, cancelling the $|W_n|$
from the denominator). 

Now if the double coset of
$\tilde{w}$ corresponds to the matrix
$(m_{ij})\in M_{(\mu;\nu),(\mu';\nu')}$, then 
$W_{\mu;\nu}\cap \tilde{w}^{-1}W_{\mu';\nu'}\tilde{w}$ is a reflection subgroup
of $W_n$ isomorphic to $\prod_{i,j}W_{m_{ij}}$, in such a way that its character
$y\mapsto\delta_{\mu;\nu}(y)\delta_{\mu';\nu'}(\tilde{w}y\tilde{w}^{-1})$
corresponds to the character which is $\delta$ on the factors
$W_{m_{ij}}$ where $i>\mu_1$ and $j\leq\mu_1'$ or $i\leq\mu_1$ and $j>\mu_1'$,
and trivial on the other factors. Using the analogue of \eqref{eqn:fake}
and the fact that $\epsilon$ occurs in $C^{n^2}(W_n)$ and $\delta\epsilon$
in $C^{n^2-n}(W_n)$, we find that
\[
\begin{split}
&\frac{1}{|W_{\mu;\nu}\cap \tilde{w}^{-1}W_{\mu';\nu'}\tilde{w}|}
\sum_{y\in W_{\mu;\nu}\cap \tilde{w}^{-1}W_{\mu';\nu'}\tilde{w}}
\frac{\delta_{\mu;\nu}(y)\delta_{\mu';\nu'}(\tilde{w}y\tilde{w}^{-1})}{\det(t-y)}\\
&=\frac{\prod_{i\leq\mu_1,j\leq\mu_1'}t^{m_{ij}^2}
\prod_{i\leq\mu_1,j>\mu_1'}t^{m_{ij}^2-m_{ij}}
\prod_{i>\mu_1,j\leq\mu_1'}t^{m_{ij}^2-m_{ij}}
\prod_{i>\mu_1,j>\mu_1'}t^{m_{ij}^2}
}
{\prod_{i,j}\prod_{a=1}^{m_{ij}}(t^{2a}-1)}\\
&=\frac{t^{n-|\mu|-|\mu'|+2m_{\leq\mu_1,\leq\mu_1'}+2\sum_{i,j}\binom{m_{ij}}{2}}}
{\prod_{i,j}\prod_{a=1}^{m_{ij}}(t^{2a}-1)}.
\end{split}
\]
Substituting this in the above and using
$|W_{\mu';\nu'}\tilde{w}W_{\mu;\nu}|=\frac{|W_{\mu;\nu}||W_{\mu';\nu'}|}
{|W_{\mu;\nu}\cap \tilde{w}^{-1}W_{\mu';\nu'}\tilde{w}|}$, we obtain the
result.
\end{proof}

To analyse the left-hand side of \eqref{eqn:target}, we return to our
enhanced nilpotent cone $V\times\cN$, choosing the base field $\F$
to be an algebraic closure of the finite field $\F_q$, where $q$ is some
prime power. It is evident
from the results of Section \ref{sect:param} that each $G$-orbit
$\cO_{\mu;\nu}$ is defined over $\F_q$.
\begin{prop} \label{prop:fq}
For all $(\mu;\nu),(\mu';\nu')\in\cQ_n$, we have
\[ 
\begin{split}
&\sum_{(\tau;\upsilon)\in\cQ_n}|\cO_{\tau;\upsilon}(\F_q)|\,
\Pi^{\tau;\upsilon}_{\mu;\nu}(q)\Pi^{\tau;\upsilon}_{\mu';\nu'}(q)\\
&=\sum_{(m_{ij})\in M_{(\mu;\nu),(\mu';\nu')}}
\frac{q^{\binom{n}{2}-n(\mu+\nu)-n(\mu'+\nu')+\sum_{i,j}\binom{m_{ij}}{2}
+m_{\leq\mu_1,\leq\mu_1'}}
\prod_{a=1}^n(q^{a}-1)}
{\prod_{i,j} \prod_{a=1}^{m_{ij}}(q^{a}-1)}.
\end{split}
\]
\end{prop}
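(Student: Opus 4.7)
The strategy is to compute $|Z(\F_q)|$ in two ways, where
\[
Z := \widetilde{\cF_{\mu;\nu}} \times_{V\times\cN} \widetilde{\cF_{\mu';\nu'}}
\]
is naturally a variety over $\F_q$, and equate the results.

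First, projecting $Z\to V\times\cN$ and stratifying the base by $G$-orbits gives
\[
|Z(\F_q)|=\sum_{(\tau;\upsilon)\in\cQ_n}|\cO_{\tau;\upsilon}(\F_q)|\cdot|\pi_{\mu;\nu}^{-1}(v,x)(\F_q)|\cdot|\pi_{\mu';\nu'}^{-1}(v,x)(\F_q)|
\]
for any chosen $(v,x)\in\cO_{\tau;\upsilon}(\F_q)$. The affine paving of $\pi_{\mu;\nu}^{-1}(v,x)$ constructed in the proof of Theorem~\ref{thm:paving} only involves $\F_q$-rational operations on $(v,x)$ (forming $\F[x]v$ and its $x$-stable subspaces $U_d$, intersecting partial flags with $\F[x]v$, and invoking Spaltenstein's paving of a generalized Springer fibre), so it is defined over $\F_q$; standard point-counting together with Corollary~\ref{cor:oddzero}(1) then yields $|\pi_{\mu;\nu}^{-1}(v,x)(\F_q)|=\Pi^{\tau;\upsilon}_{\mu;\nu}(q)$, and similarly for the second factor. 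This produces the left-hand side of the proposition.

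Second, projecting $Z\to\cF_{\mu;\nu}\times\cF_{\mu';\nu'}$ and stratifying by $G$-orbits produces the right-hand side. These $G$-orbits are in bijection with $M_{(\mu;\nu),(\mu';\nu')}$: the orbit attached to $(m_{ij})$ consists of pairs of partial flags with $\dim(V_k\cap V'_\ell)=m_{\leq k,\leq\ell}$. Choosing a basis of $V$ that decomposes it as $\bigoplus_{i,j}B_{ij}$ with $\dim B_{ij}=m_{ij}$, $V_k=\bigoplus_{i\leq k,\,j}B_{ij}$, and $V'_\ell=\bigoplus_{i,\,j\leq\ell}B_{ij}$, the fibre of the projection over this pair consists of those $(v,x)$ with $v\in V_{\mu_1}\cap V'_{\mu'_1}$ and $x$ preserving both flags. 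In block coordinates, the latter forces $x(B_{ij})\subseteq\bigoplus_{i'<i,\,j'<j}B_{i'j'}$, which automatically makes $x$ nilpotent, so the fibre is an affine space of dimension $m_{\leq\mu_1,\leq\mu'_1}+\sum_{i,j}m_{ij}m_{<i,<j}$. The stabilizer of the pair has Levi $\prod_{i,j}\GL_{m_{ij}}$ and connected unipotent radical of dimension $\sum_{i,j}m_{ij}(m_{\leq i,\leq j}-m_{ij})$, so the orbit size is $|G(\F_q)|$ divided by the product of these factor cardinalities.

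Assembling the two expressions for $|Z(\F_q)|$ and using $|\GL_k(\F_q)|=q^{\binom{k}{2}}\prod_{a=1}^k(q^a-1)$ to simplify, the agreement with the claimed formula reduces to the combinatorial identity
\[
n(\mu+\nu)+n(\mu'+\nu')=\sum_{i,j}m_{ij}(m_{<i,j}+m_{i,<j})+2\sum_{i,j}\binom{m_{ij}}{2}.
\]
The essential input is that the row sums $\{\sum_j m_{ij}\}_i$ form the composition \eqref{eqn:composition} attached to $(\mu;\nu)$, whose entries, viewed as a multiset, equal the multiset of parts of $(\mu+\nu)^\bt$ (both count rows of $\mu+\nu$ of each given length, as can be checked by computing the multiplicity of $h$ as $\mu_h-\mu_{h+1}+\nu_h-\nu_{h+1}$); hence $\sum_i\binom{\sum_j m_{ij}}{2}=n(\mu+\nu)$, and symmetrically for the column sums. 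Expanding $(\sum_j m_{ij})^2$ into diagonal and off-diagonal parts converts this into the desired identity. The only real obstacle is the careful bookkeeping of exponents in this last step.
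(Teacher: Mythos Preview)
Your proof is correct and follows the same two-projection strategy as the paper's own argument: compute $|Z(\F_q)|$ both by stratifying over $V\times\cN$ (using the $\F_q$-rationality of the affine paving) and by stratifying over the $G$-orbits in $\cF_{\mu;\nu}\times\cF_{\mu';\nu'}$. The only cosmetic differences are that the paper computes the orbit cardinality $|\cO_{(m_{ij})}(\F_q)|$ directly rather than via the stabilizer (leading to an equivalent but differently arranged exponent identity), and that your identity $\sum_i\binom{\sum_j m_{ij}}{2}=n(\mu+\nu)$ follows more simply from the additivity $n(\mu+\nu)=n(\mu)+n(\nu)$, since the row sums are exactly the parts $\mu_k^\bt$ and $\nu_k^\bt$, without needing the multiset comparison with $(\mu+\nu)^\bt$.
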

\begin{proof}
For any $(v,x)\in V(\F_q)\times\cN(\F_q)$, the alpha-partition of
$\pi_{\mu;\nu}^{-1}(v,x)$ defined in Theorem \ref{thm:paving} is defined
over $\F_q$, and hence
\begin{equation} \label{eqn:fibrepurity}
|\pi_{\mu;\nu}^{-1}(v,x)(\F_q)|=\Pi_{\mu;\nu}^{\tau;\upsilon}(q),\text{ where }
(v,x)\in\cO_{\tau;\upsilon}(\F_q).
\end{equation}
Hence the left-hand side of our desired equation is the number of $\F_q$-points of
the variety
\[
\begin{split}
Z=\{(v,x,(V_i),(V_j'))\in V\times\cN\times & \cF_{\mu;\nu}\times\cF_{\mu';\nu'}\,|\\
&(v,x,(V_i))\in\widetilde{\cF_{\mu;\nu}},\,
(v,x,(V_j'))\in\widetilde{\cF_{\mu';\nu'}}\}.
\end{split}
\]
Now the $G$-orbits in $\cF_{\mu;\nu}\times\cF_{\mu';\nu'}$ are in bijection with
$M_{(\mu;\nu),(\mu';\nu')}$: the orbit $\cO_{(m_{ij})}$ corresponding to a matrix
$(m_{ij})$ consists of pairs $((V_i),(V_j'))$ satisfying
\begin{equation} 
\dim (V_i\cap V_j')=m_{\leq i,\leq j},\text{ for all }i,j.
\end{equation}
So we have a partition $Z=\bigcup_{(m_{ij})\in M_{(\mu;\nu),(\mu';\nu')}}Z_{(m_{ij})}$,
where
\[
\begin{split} 
Z_{(m_{ij})}=\{(v,x,(V_i),(V_j'))&\,|\,((V_i),(V_j'))\in\cO_{(m_{ij})},\\
&v\in V_{\mu_1}\cap V_{\mu_1'},\,x\in\cN,\,
x(V_i)\subseteq V_{i-1},\, x(V_j')\subseteq V_{j-1}'\}. 
\end{split} 
\]
Hence $|Z(\F_q)|=\sum_{(m_{ij})\in M_{(\mu;\nu),(\mu';\nu')}}|Z_{(m_{ij})}(\F_q)|$,
and we need only show that $|Z_{(m_{ij})}(\F_q)|$ is given by the $(m_{ij})$
term of the right-hand side of our desired equation.
By standard methods, we compute
\begin{equation} 
|\cO_{(m_{ij})}(\F_q)|
=\frac{\prod_{a=1}^n(q^{a}-1)}{\prod_{i,j} \prod_{a=1}^{m_{ij}}(q^{a}-1)}
\prod_{i,j}
q^{m_{ij}m_{<i,>j}},
\end{equation}
where the fraction represents the number of ways of choosing
the partial flag $(V_i)$ and the images of each $V_i\cap V_j'$ in $V_i/V_{i-1}$,
and the other product represents the number of ways of choosing the
subspaces $V_j'$ themselves once these images are fixed.
For any $((V_i),(V_j'))\in\cO_{m_{ij}}(\F_q)$, we have
\begin{equation}
\begin{split}
&|(V_{\mu_1}\cap V_{\mu_1'})(\F_q)|=q^{m_{\leq\mu_1,\leq\mu_1'}},\\
&|\{x\in\cN(\F_q)\,|\,x(V_i)\subseteq V_{i-1},\, x(V_j')\subseteq V_{j-1}'\}|
=\prod_{i,j}
q^{m_{ij}m_{<i,<j}}.
\end{split}
\end{equation}
Finally, to reconcile the powers of $q$, note that
\[ 
\begin{split}
\sum_{i,j}m_{ij}m_{<i,>j}+m_{ij}m_{<i,<j}
&=\frac{1}{2}\sum_{\substack{i,j,i',j'\\i'\neq i,j'\neq j}}m_{ij}m_{i'j'}\\
&=\frac{1}{2}(n^2-\sum_i (\sum_j m_{ij})^2-\sum_j (\sum_i m_{ij})^2
+\sum_{i,j} m_{ij}^2)\\
&=\binom{n}{2}-n(\mu+\nu)-n(\mu'+\nu')+\sum_{i,j}\binom{m_{ij}}{2},
\end{split}
\]
as required.
\end{proof}

Now let $R$ be the ring of all functions $g: \Z_{>0} \to \Qlb$ of the form
\begin{equation}\label{eqn:Z-char}
g(s) = \sum_i c_i (a_i)^s
\qquad\text{with $c_i \in \Z$ and $a_i \in \Qlb$ (a finite sum).}
\end{equation}
By well-known facts, any $g \in R$ can be expressed in the
above form in a unique way, and $R$ is an integral domain.
We fix a square root $q^{1/2}$ of $q$ in $\Qlb$, and
identify $\Z[t]$ with a subring of $R$ via the map which sends
a polynomial $p(t)$ to the function $s\mapsto p(q^{s/2})$.
Let $K$ denote the fraction field of $R$, an extension field of $\Q(t)$.
It is easy to see that $\Q(t) \cap R = \Z[t,t^{-1}]$.

For any $(\tau;\upsilon)\in\cQ_n$, we define an element
$\lambda_{(\tau;\upsilon)} \in R$
by the rule
\[ \lambda_{(\tau;\upsilon)}(s)=
|\cO_{\rho;\sigma}(\F_{q^s})|=\sum_i(-1)^i\tr(F^s\,|\,H_c^i(\cO_{\rho;\sigma},\Qlb)),
\]
where $F$ denotes the Frobenius endomorphism of $\cO_{\rho;\sigma}$ relative
to $\F_q$. 
Comparing Proposition \ref{prop:reps} and Proposition
\ref{prop:fq} (with $q$ replaced by a general power $q^s$), we see that
equation \eqref{eqn:target} holds
in the field $K$. This completes the proof of
Theorem \ref{thm:main}.

We obtain the following result as a by-product of this proof.
\begin{prop} \label{prop:qpoints}
For any $(\tau;\upsilon)\in\cQ_n$, there is a polynomial
$\theta_{(\tau;\upsilon)}(t)\in\Z[t]$ such that
$|\cO_{\tau;\upsilon}(\F_q)|=\theta_{(\tau;\upsilon)}(q)$ for every prime power $q$.
\end{prop}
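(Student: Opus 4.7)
The plan is to deduce the proposition by inverting a unitriangular system that expresses $|\cO_{\tau;\upsilon}(\F_q)|$ in terms of polynomial point-counts on the resolutions $\widetilde{\cF_{\mu;\nu}}$. First I would observe that $|\widetilde{\cF_{\mu;\nu}}(\F_q)|$ is given by a polynomial in $q$ with nonnegative integer coefficients, valid uniformly for every prime power $q$: this is because $\widetilde{\cF_{\mu;\nu}}$ is an iterated vector bundle (of known rank) over the partial flag variety $\cF_{\mu;\nu}$, whose Bruhat decomposition furnishes an affine paving with cell count and dimensions depending only on $(\mu;\nu)$.

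Next, partitioning $\widetilde{\cF_{\mu;\nu}}(\F_q)$ according to which orbit $\cO_{\tau;\upsilon}$ contains the image under $\pi_{\mu;\nu}$, and invoking the uniform fibre count $|\pi_{\mu;\nu}^{-1}(v,x)(\F_q)| = \Pi^{\tau;\upsilon}_{\mu;\nu}(q)$ from \eqref{eqn:fibrepurity}, I obtain the key identity
\[|\widetilde{\cF_{\mu;\nu}}(\F_q)| = \sum_{(\tau;\upsilon) \leq (\mu;\nu)} |\cO_{\tau;\upsilon}(\F_q)|\,\Pi^{\tau;\upsilon}_{\mu;\nu}(q),\]
valid for every prime power $q$. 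By Corollary \ref{cor:oddzero}, each $\Pi^{\tau;\upsilon}_{\mu;\nu}(t)$ lies in $\N[t]$ and is independent of the base field.

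Finally, I induct on $(\tau;\upsilon) \in \cQ_n$ with respect to $\leq$. The base case is the minimum element $(\varnothing;(1^n))$, whose orbit is the single point $(0,0)$. For the inductive step, specialising the identity to $(\mu;\nu) = (\tau_0;\upsilon_0)$ and using $\Pi^{\tau_0;\upsilon_0}_{\tau_0;\upsilon_0}(t) = 1$ from Corollary \ref{cor:oddzero}(4), I can solve for $|\cO_{\tau_0;\upsilon_0}(\F_q)|$ as $|\widetilde{\cF_{\tau_0;\upsilon_0}}(\F_q)|$ minus a $\Z[q]$-linear combination of the previously constructed $\theta_{(\tau';\upsilon')}(q)$ with $(\tau';\upsilon') < (\tau_0;\upsilon_0)$. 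Every term on the right is a polynomial in $q$ with integer coefficients extracted by a formula in which $q$ is merely a formal variable, so the resulting $\theta_{(\tau_0;\upsilon_0)}(t) \in \Z[t]$ satisfies the required equality for every prime power of every characteristic. There is no serious obstacle: the argument is essentially bookkeeping built on the uniform polynomial count of $|\widetilde{\cF_{\mu;\nu}}(\F_q)|$ and the unitriangularity of the matrix $(\Pi^{\tau;\upsilon}_{\mu;\nu}(t))$.
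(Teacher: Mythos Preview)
Your argument is correct and considerably more elementary than the paper's. The paper obtains Proposition~\ref{prop:qpoints} as a by-product of the proof of Theorem~\ref{thm:main}: having set $\lambda_{(\tau;\upsilon)}(s)=|\cO_{\tau;\upsilon}(\F_{q^s})|$ as an element of the ring $R$ of exponential sums, it invokes the uniqueness of the Lusztig--Shoji factorization $P\Lambda P^\bt=\Omega$ (Theorem~\ref{thm:ls}) to conclude that $\lambda_{(\tau;\upsilon)}\in\Q(t)\cap R=\Z[t,t^{-1}]$, and then uses integrality and the $t\mapsto -t$ symmetry of~\eqref{eqn:target} to land in $\Z[t^2]$. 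Your route bypasses all of this representation theory: you read the polynomiality directly off the affine-paving point count of $\widetilde{\cF_{\mu;\nu}}$ together with the unitriangular system coming from~\eqref{eqn:fibrepurity} and Corollary~\ref{cor:oddzero}(4). What the paper's approach buys is a stronger conclusion (that $\theta_{(\tau;\upsilon)}$ is actually a polynomial in $t^2$, and its identification with the diagonal entry of $\Lambda$), while your approach buys independence from Sections~\ref{sect:kostka} and the Weyl-group machinery, and would work in any setting where one has semismall resolutions with uniformly paved fibres over a stratified target.
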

\begin{proof}
As mentioned above, the uniqueness in \eqref{eqn:innerprod} shows that
$\lambda_{(\tau;\upsilon)}$ is an element of $\Q(t)$, and hence of
$\Q(t)\cap R=\Z[t,t^{-1}]$; moreover, uniqueness implies that it does not
depend on 
the prime power $q$ used to define it. 
In addition, since
$\lambda_{(\tau;\upsilon)}$ is $\Z$-valued, it must actually lie in
$\Z[t]$.
Proposition \ref{prop:reps}
shows that each side of \eqref{eqn:target} is unchanged under 
$t\mapsto -t$, so uniqueness also shows that $\lambda_{(\tau;\upsilon)}$
is unchanged under $t\mapsto -t$, which means that 
$\lambda_{(\tau;\upsilon)}\in\Z[t^2]$. This gives the statement.
\end{proof}
\noindent
In the case of an ordinary nilpotent orbit $\cO_\lambda$, this result is well
known: we have $|\cO_\lambda(\F_q)|=\frac{a_{(1^n)}(q)}{a_\lambda(q)}$,
where $a_\lambda(t)\in\Z[t]$ is defined by \cite[II.(1.6)]{macdonald},
and it is easy to see that $\frac{a_{(1^n)}(t)}{a_\lambda(t)}\in\Z[t]$.

Along similar lines, we can
relate our intersection cohomology
to the usual Kostka polynomials via certain generalizations of
Hall polynomials.
\begin{prop}
Let $(\tau;\upsilon),(\rho;\sigma)\in\cQ_n$.
\begin{enumerate}
\item
There is a polynomial
$g_{\rho;\sigma}^{\tau;\upsilon}(t)\in\Z[t]$ such that for any 
prime power $q$ and $(v,x)\in\cO_{\tau;\upsilon}(\F_q)$,
$g_{\rho;\sigma}^{\tau;\upsilon}(q)$ counts the $\F_q$-points of the variety
\[
\{W\subset V\,|\,v\in W,\text{ Jordan type of }x|_W\text{ is }\rho,
\text{ Jordan type of }x|_{V/W}\text{ is }\sigma\}.
\]
\item 
We have
\[
\IC^{\tau;\upsilon}_{\rho;\sigma}(t)=
\sum_{\substack{\theta\leq\rho\\\psi\leq\sigma}} t^{-n(\rho+\sigma)}
g_{\theta;\psi}^{\tau;\upsilon}(t)
\tK_{\rho\theta}(t)\tK_{\sigma\psi}(t).
\]
\end{enumerate}
\end{prop}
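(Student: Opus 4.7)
The plan is to leverage Proposition~\ref{prop:bimodule}, which realises $\IC(\overline{\cO_{\rho;\sigma}},\Qlb)[\dim\cO_{\rho;\sigma}]$ as $R(\pi_{|\rho|,|\sigma|})_* A_{\rho;\sigma}$, together with Lusztig's description of stalks of type-A IC sheaves (Theorem~\ref{thm:lusztig}) applied to the two factors coming from the construction of $A_{\rho;\sigma}$. The natural order is to prove part (2) as a stalk computation, and then to deduce part (1) from triangular inversion together with a direct geometric check of polynomial point-count.

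For part (1), I would first note that $G$-equivariance and connectedness of $G^{(v,x)}$ (Proposition~\ref{prop:dim}(7)) make the $\F_q$-point count of the variety $Y^{v,x}_{\rho;\sigma}$ depend only on the $G$-orbit $\cO_{\tau;\upsilon}$. To get polynomiality in $q$, I would refine the affine-paving argument of Theorem~\ref{thm:paving}: partition $Y^{v,x}_{\rho;\sigma}$ by the dimensions $d_k=\dim(W\cap x^k\F[x]v)$, reduce via $W\mapsto W/(W\cap\F[x]v)$ to a problem about $x$-stable subspaces of $V/\F[x]v$ with prescribed Jordan types for the restriction and the quotient, and invoke the classical Hall polynomial result (\cite[II.(4.3)]{macdonald}) for that auxiliary count, together with the standard affine bundle computation to lift back.

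For part (2), fix $(v,x)\in\cO_{\tau;\upsilon}$ and compute the stalk of $R(\pi_{|\rho|,|\sigma|})_*A_{\rho;\sigma}$ at $(v,x)$ as $H^*\bigl(\pi_{|\rho|,|\sigma|}^{-1}(v,x), A_{\rho;\sigma}\bigr)$, where the fibre is the variety of $|\rho|$-dimensional $x$-stable subspaces $W\ni v$. I would stratify this fibre by the Jordan types $(\theta,\psi)$ of the pair $(x|_W,x|_{V/W})$; each stratum is precisely $Y^{v,x}_{\theta;\psi}$. By construction of $A_{\rho;\sigma}$ through the diagram $\cN_m\times\cN_{n-m}\xleftarrow{r}\cH_{m,n-m}\xrightarrow{q}\cG_{m,n-m}$, its restriction to each stratum is locally constant with fibre (up to the shift inherent in Proposition~\ref{prop:bimodule}) equal to the tensor product of the stalks of $\IC(\overline{\cO_\rho},\Qlb)$ and $\IC(\overline{\cO_\sigma},\Qlb)$ at the respective types. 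Purity of all IC sheaves involved and vanishing of odd cohomology (Corollary~\ref{cor:oddzero}), combined with the Grothendieck trace formula over $\F_q$, upgrade the alternating-trace identity to an identity of Poincar\'e polynomials, yielding
\[ \IC^{\tau;\upsilon}_{\rho;\sigma}(t)=\sum_{\theta\leq\rho,\,\psi\leq\sigma} g^{\tau;\upsilon}_{\theta;\psi}(t)\,\IC^{\varnothing;\theta}_{\varnothing;\rho}(t)\,\IC^{\varnothing;\psi}_{\varnothing;\sigma}(t). \]
Substituting Lusztig's formula $\IC^{\varnothing;\theta}_{\varnothing;\rho}(t)=t^{-n(\rho)}\tK_{\rho\theta}(t)$ (and the analogue for $\sigma$) from Theorem~\ref{thm:lusztig} gives the stated identity.

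The main obstacle is the careful bookkeeping of cohomological shifts and Frobenius weights so that the stratification of the fibre by Jordan types produces the stated polynomial identity rather than merely an alternating-sum identity. Once purity and the odd-vanishing of Corollary~\ref{cor:oddzero} are in hand, the argument is straightforward; the role of part (1) is then to certify that the coefficients $g^{\tau;\upsilon}_{\theta;\psi}(t)$ appearing as $\F_q$-point counts of Jordan-type strata are genuine polynomials, which by the upper-triangularity of the matrices $(\tK_{\rho\theta}(t))$ and $(\tK_{\sigma\psi}(t))$ could alternatively be deduced from part (2) by M\"obius inversion once the $\F_q$-point counts of all relevant strata are shown to agree with a rational function of $q$.
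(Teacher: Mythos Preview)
Your approach is correct in spirit and closely parallels the paper's, since both arguments are the characteristic-function shadow of Proposition~\ref{prop:bimodule}; the difference is in the route. You compute the stalk of $R(\pi_{|\rho|,|\sigma|})_*A_{\rho;\sigma}$ at $(v,x)$ directly via Grothendieck--Lefschetz, stratifying the fibre $\{W\ni v,\,xW\subseteq W\}$ by the Jordan types $(\theta,\psi)$ and reading off the stalk of $A_{\rho;\sigma}$ on each stratum as a product of type-A IC stalks. The paper instead avoids handling $A_{\rho;\sigma}$ as a sheaf: it decomposes the \emph{resolution} fibre point-count $\Pi^{\tau;\upsilon}_{\mu;\nu}(q)$ according to the Jordan types of $x|_{V_{\mu_1}}$ and $x|_{V/V_{\mu_1}}$, obtaining
\[
\Pi^{\tau;\upsilon}_{\mu;\nu}(q)=\sum_{\theta,\psi} g^{\tau;\upsilon}_{\theta;\psi}(q)\,\Pi^{\varnothing;\theta}_{\mu;\varnothing}(q)\,\Pi^{\varnothing;\psi}_{\varnothing;\nu}(q),
\]
substitutes known formulas expressing $\Pi^{\varnothing;\theta}_{\mu;\varnothing}(q)$ and $\Pi^{\varnothing;\psi}_{\varnothing;\nu}(q)$ in terms of Kostka polynomials, and then inverts the unitriangular relation of Corollary~\ref{cor:oddzero}(3) between $\Pi$ and $\IC$. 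Your path is more conceptually direct; the paper's is more elementary in that it never leaves the world of honest point-counts of resolution fibres. Both deduce part~(1) the same way, namely by unitriangularity of $(\tK_{\rho\theta}(t))$ once the $q$-identity in~(2) is established.

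One caveat on your sketch for a direct geometric proof of~(1): the proposed stratification by $d_k=\dim(W\cap x^k\F[x]v)$ is vacuous, because $v\in W$ and $xW\subseteq W$ force $\F[x]v\subseteq W$, so each $d_k=\dim x^k\F[x]v$ is already fixed. The reduction $W\mapsto W/\F[x]v$ does land you in an $x$-stable-subspace problem on $V/\F[x]v$, but the Jordan type of $x|_{W/\F[x]v}$ is not determined by $\rho$ alone (it depends on the type of $(v,x|_W)$ inside $W$), so a further stratification is needed before classical Hall polynomials apply. Since you already observe that~(1) follows from~(2) by M\"obius inversion---and that is exactly how the paper proceeds---this detour is unnecessary.
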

\begin{proof}
Note that $\tK_{\mu\mu}(t)=t^{n(\mu)}$, so the transition matrix in (2) between
$\IC$ and $g$ is
a unitriangular matrix over $\Z[t]$. So if we 
define $g_{\rho;\sigma}^{\tau;\upsilon}(q)$ by the rule in (1), 
all we need to prove is that
\begin{equation} \label{eqn:desired}
\IC^{\tau;\upsilon}_{\rho;\sigma}(q)=
\sum_{\substack{\theta\leq\rho\\\psi\leq\sigma}} q^{-n(\rho+\sigma)}
g_{\theta;\psi}^{\tau;\upsilon}(q)
\tK_{\rho\theta}(q)\tK_{\sigma\psi}(q),
\end{equation}
and the fact that
$g_{\rho;\sigma}^{\tau;\upsilon}(q)$ is an integer polynomial in $q$
will automatically follow.
Now \eqref{eqn:desired}
is the characteristic-function analogue of Proposition \ref{prop:bimodule}, so we
mimic the proof of that result. The analogues of \eqref{eqn:littlekostka},
which are special cases of \cite[Lemma 5.5]{hend:ft}, are:
\begin{equation}
\begin{split}
\Pi_{\mu;\varnothing}^{\varnothing;\theta}(q)&=\sum_{\theta\leq\rho\leq\mu}q^{-n(\mu)}
K_{\rho^\bt\mu^\bt}\tK_{\rho\theta}(q),\\
\Pi_{\varnothing;\nu}^{\varnothing;\psi}(q)&=\sum_{\psi\leq\sigma\leq\nu}q^{-n(\nu)}
K_{\sigma^\bt\nu^\bt}\tK_{\sigma\psi}(q).
\end{split}
\end{equation}
The analogue of \eqref{eqn:bigkostka} follows from \eqref{eqn:fibrepurity}, 
by classifying the
$\F_q$-points of $\pi_{\mu;\nu}^{-1}(v,x)$ according to the
Jordan types of $x|_{V_{\mu_1}}$ and $x|_{V/V_{\mu_1}}$:
\begin{equation}
\begin{split}
\Pi^{\tau;\upsilon}_{\mu;\nu}(q)
&=\sum_{(\theta;\psi)\in\cQ_n}g_{\theta;\psi}^{\tau;\upsilon}(q)
\Pi_{\mu;\varnothing}^{\varnothing;\theta}(q)
\Pi_{\varnothing;\nu}^{\varnothing;\psi}(q)\\
&=\sum_{\substack{\theta\leq\rho\leq\mu\\\psi\leq\sigma\leq\nu}}
q^{-n(\mu+\nu)}
K_{\rho^\bt\mu^\bt}K_{\sigma^\bt\nu^\bt}\,
g_{\theta;\psi}^{\tau;\upsilon}(q)\tK_{\rho\theta}(q)\tK_{\sigma\psi}(q).
\end{split}
\end{equation}
Using the unitriangularity in
Corollary \ref{cor:oddzero}(3), we deduce \eqref{eqn:desired}.
\end{proof}
\noindent
Note that $g_{\rho;\sigma}^{\varnothing;\pi}(t)$ is the usual
Hall polynomial $g_{\rho\sigma}^\pi(t)$, as in \cite[II.4]{macdonald}.
The relationship between our generalized Hall polynomials and those defined in
\cite[\S4]{fgt} is that $g_{\rho;\sigma}^{\tau;\upsilon}(q)=
\sum_{\mu+\nu=\rho}G_{(\mu,\nu)\sigma}^{(\tau,\upsilon)}$.
\section{Connections with Kato's Exotic Nilpotent Cone}
\label{sect:exotic}
In this section, we discuss the analogy between the enhanced nilpotent cone
$V\times\cN$ studied in this paper and the exotic nilpotent cone
$\fN$ studied by Kato in \cite{kato:exotic},
\cite{kato:springer}, \cite{kato:deformations}. 
(We assume that $\mathrm{char}\,\F\neq 2$.)
To make a concrete connection,
we choose the symplectic vector space $W$ to be $V\oplus V^*$, with the
skew-symmetric form
\begin{equation} 
\langle(v,f),(v',f')\rangle=f'(v)-f(v').
\end{equation}
Recall that $\fN_0$ denotes the closed subvariety of $\cN(W)$ consisting of
elements which are self-adjoint for $\langle\cdot,\cdot\rangle$, and
$\fN=W\times\fN_0$.
Let $K$ denote the
symplectic group $\mathrm{Sp}(W,\langle\cdot,\cdot\rangle)$; then
$K$ clearly acts on $\fN_0$ and $\fN$.

We let $G=\GL(V)$ act on $W$ in the natural way; the resulting representation
$G\to\GL(W)$ identifies $G$ with the subgroup $\{g\in K\,|\,gV=V,gV^*=V^*\}$ of $K$.
Similarly, the map $\End(V)\to\End(W):x\mapsto(x,x^\bt)$ identifies
$\cN$ with 
\[ \{x\in\fN_0\,|\,x(V)\subseteq V,x(V^*)\subseteq V^*\}, \] 
a $G$-stable closed subvariety of $\fN_0$.
So the exotic nilpotent cone $\fN$
is sandwiched between two enhanced nilpotent cones:
$V\times\cN$ is a $G$-stable closed subvariety of $\fN$, and $\fN$ is a
$K$-stable closed subvariety of $W\times\cN(W)$.
Kato has proved that the orbits of these three varieties match up as follows.
(Here, if $\lambda$ is a partition, $\lambda\cup\lambda$ denotes the partition
$(\lambda_1,\lambda_1,\lambda_2,\lambda_2,\cdots)$.)
\begin{thm} \label{thm:kato}
The $K$-orbits in $\fN$ are in bijection with $\cQ_n$, in such a way that the orbit
$\Oo_{\mu;\nu}$ corresponding
to $(\mu;\nu)$ contains the $G$-orbit $\cO_{\mu;\nu}$, and is contained in the 
$\GL(W)$-orbit $\cO_{\mu\cup\mu;\nu\cup\nu}$.
\end{thm}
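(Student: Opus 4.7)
The plan is to use the $G$-orbits from Proposition \ref{prop:param} as candidate representatives for $K$-orbits, then verify the sandwich containments, and finally establish exhaustiveness via a structural lemma on self-adjoint nilpotents.

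First, I would simply define $\Oo_{\mu;\nu} := K \cdot \cO_{\mu;\nu}$. Since $G \subseteq K$ and $\cO_{\mu;\nu}$ is a single $G$-orbit, $\Oo_{\mu;\nu}$ is automatically a single $K$-orbit that manifestly contains $\cO_{\mu;\nu}$. To verify the upper containment $\Oo_{\mu;\nu} \subseteq \cO_{\mu\cup\mu;\,\nu\cup\nu}$ (in the $\GL(W)$-orbit sense), it suffices to test on a representative $(v, x) \in \cO_{\mu;\nu}$. The key observation is that as an $\F[x]$-module, $W = V \oplus V^*$ is the direct sum of two copies of the \emph{same} module, since the transpose $x_V^{\bt}$ has the same Jordan type $\lambda = \mu+\nu$ as $x_V$. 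Fixing an identification $(W, x) \cong (V \oplus V,\, x_V \oplus x_V)$ sending $v$ to $(v, 0)$, the commutant $E^x$ becomes $\mathrm{Mat}_{2 \times 2}(E^{x_V})$, so $E^x v = E^{x_V} v \oplus E^{x_V} v$, which has Jordan type $\mu \cup \mu$, while $W/E^x v$ has type $\nu \cup \nu$. Corollary \ref{cor:extension} then places $(v,x)$ in $\cO_{\mu\cup\mu;\,\nu\cup\nu}$ as claimed.

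Since distinct bipartitions in $\cQ_n$ yield distinct bipartitions of the form $(\mu\cup\mu;\,\nu\cup\nu)$, and different $\GL(W)$-orbits are disjoint (Proposition \ref{prop:param}), the $\Oo_{\mu;\nu}$ are pairwise disjoint $K$-orbits. It remains to show each $K$-orbit on $\fN$ has this form, i.e., that every $(w, x) \in \fN$ is $K$-conjugate to some element of $V \times \cN$. The starting observation is that $\F[x] w \subseteq W$ is always an $x$-stable isotropic subspace: using self-adjointness, $\langle x^i w, x^j w\rangle = \langle w, x^{i+j} w\rangle$, which is symmetric in $(i,j)$ and thus must vanish by antisymmetry of the form (here we use $\mathrm{char}\,\F \neq 2$). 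The goal reduces to the structural lemma that any $x$-stable isotropic subspace $I \subseteq W$ extends to an $x$-stable Lagrangian decomposition $W = L \oplus L'$ with $I \subseteq L$. Given this, transitivity of $K$ on ordered pairs of transverse Lagrangians produces $k \in K$ with $k(L, L') = (V, V^*)$, so that $k \cdot (w, x) \in V \times \cN$.

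The structural lemma is the heart of the proof and where I expect the main obstacle. My approach would be induction on $\dim W$: the quotient $I^\perp/I$ inherits a symplectic form and a self-adjoint nilpotent $\bar{x}$, so by induction it admits an $\bar{x}$-stable Lagrangian decomposition. Pulling back the first summand through the projection $I^\perp \twoheadrightarrow I^\perp/I$ yields an $x$-stable Lagrangian $L \subseteq W$ with $L \supseteq I$; the delicate step is to construct a complementary $x$-stable Lagrangian $L'$, for which one would like an $x$-equivariant splitting of $W \twoheadrightarrow W/L \cong L^*$ whose image is Lagrangian. Such a splitting need not exist purely on module-theoretic grounds (finite-dimensional nilpotent $\F[x]$-modules are generally not projective), so one must exploit the self-adjointness of $x$ and the symplectic form to build $L'$ explicitly. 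An alternative route, which I suspect is cleaner, is to invoke the classification of indecomposable self-adjoint nilpotents on symplectic spaces in characteristic $\neq 2$ and decompose $W$ into standard summands that each admit a canonical $x$-stable Lagrangian decomposition, lifting $I$ through this decomposition piece by piece.
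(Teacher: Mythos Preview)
Your overall architecture matches the paper's exactly: define $\Oo_{\mu;\nu}=K\cdot\cO_{\mu;\nu}$, verify the sandwich $\cO_{\mu;\nu}\subseteq\Oo_{\mu;\nu}\subseteq\cO_{\mu\cup\mu;\nu\cup\nu}$, use injectivity of $(\mu;\nu)\mapsto(\mu\cup\mu;\nu\cup\nu)$ for distinctness, and reduce surjectivity to the claim that every $(w,x)\in\fN$ admits an $x$-stable Lagrangian decomposition $W=L\oplus L'$ with $w\in L$. Two sub-steps differ, and both differences are worth noting.

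For the upper containment, you argue via Corollary~\ref{cor:extension}: identify $(W,x)$ with $(V\oplus V,\,x_V\oplus x_V)$, observe $E^x=\mathrm{Mat}_{2\times 2}(E^{x_V})$, and read off $E^x v=E^{x_V}v\oplus E^{x_V}v$ with Jordan type $\mu\cup\mu$. The paper instead builds an explicit Jordan basis of $W$ from a normal basis of $V$ and its dual, then runs the normalization procedure of Lemma~\ref{lem:jordan}. Your route is shorter and more conceptual; the paper's is more hands-on but requires no module-theoretic identification.

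For exhaustiveness, the paper simply invokes Kato's normal-form classification (\cite[Appendix~A]{kato:exotic}) to produce the required $x$-stable Lagrangian pair. You propose instead to prove this internally: first note that $\F[x]w$ is isotropic (a clean observation the paper does not isolate), then appeal to a structural lemma extending any $x$-stable isotropic to an $x$-stable Lagrangian decomposition. This is a legitimate alternative and would make the argument self-contained, but as you correctly flag, the structural lemma is the real work. Your inductive sketch runs into exactly the difficulty you identify (producing the complementary Lagrangian $L'$ is not automatic), and your second suggestion---decomposing into indecomposable self-adjoint nilpotent summands---is essentially what Kato's normal forms accomplish. So in the end both routes rest on the same underlying classification; the paper outsources it, while you would have to reprove it.
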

\begin{proof}
Kato's results \cite[Theorem 1.9]{kato:exotic} and \cite[Theorem B]{kato:springer} 
are not stated in quite these terms, so let us indicate how they imply the above
statement, using Proposition \ref{prop:param} to simplify the argument.
The key claim is that $\fN=K.(V\times\cN)$, which is equivalent to
saying that for any $(v,x)\in\fN$, there are $x$-stable maximal isotropic
subspaces $W_1,W_2\subset W$ such that $v\in W_1$ and $W_1\oplus W_2=W$.
Kato proves this in \cite[Appendix A]{kato:exotic} by showing that the
$K$-orbit of $(v,x)$ contains an explicit ``normal form'', which manifestly
has this property. Hence every $K$-orbit in $\fN$ contains 
a $G$-orbit in $V\times\cN$, and is contained in a unique $\GL(W)$-orbit
in $W\times\cN(W)$.
Given the parametrization of $G$-orbits in $V\times\cN$ by
$\cQ_n$ and the parametrization of $\GL(W)$-orbits in $W\times\cN(W)$ by
$\cQ_{2n}$, the result will follow immediately once we show that the orbit
$\cO_{\mu;\nu}$, regarded as a subvariety of $W\times\cN(W)$, is contained in
$\cO_{\mu\cup\mu;\nu\cup\nu}$. Take $(v,x)\in\cO_{\mu;\nu}$, and let
$\{v_{ij}\}$ be a normal basis of $V$ for $(v,x)$. If $\{v_{ij}^*\}$ denotes the
dual basis of $V^*$, then 
\begin{equation} 
x^\bt v_{ij}^*=\begin{cases}
v_{i,j+1}^*&\text{ if }j<\mu_i+\nu_i,\\
0&\text{ if }j=\mu_i+\nu_i.
\end{cases}
\end{equation}
Hence we have a Jordan basis of type $(\mu+\nu)\cup(\mu+\nu)$
for $x$ regarded as an endomorphism of $W$,
where each Jordan block $v_{i1},v_{i2},\cdots,v_{i,\mu_i+\nu_i}$ is followed
by the corresponding dual basis elements in reverse order. Applying to this basis
the normalization procedure in Lemma \ref{lem:jordan}, we obtain a normal
basis for $(v,x)\in W\times\cN(W)$ of type $(\mu\cup\mu;\nu\cup\nu)$, and the
proof is complete. 
\end{proof}
Note that in proving \cite[Theorem B]{kato:springer}, Kato
constructs a bijection between $\cQ_n$ and a set of ``marked partitions'',
and uses the latter to parametrize his normal forms; his bijection is such that
the normal form attached to $(\mu;\nu)\in\cQ_n$ is in our orbit $\Oo_{\nu;\mu}$, 
so his bipartitions need to be switched when comparing with this paper.

In order to prove that the closure ordering on the $K$-orbits in $\fN$ is given by
the same partial order as for the enhanced nilpotent cone, we need a new interpretation
of the quantity $\mu_1+\nu_1+\cdots+\mu_k+\nu_k+\mu_{k+1}$. For any
subspace $U\subset W$, $U^\perp$ denotes the perpendicular subspace under 
$\langle\cdot,\cdot\rangle$.
\begin{lem} \label{lem:symp}
For any $k\geq 0$ and $(v,x)\in\Oo_{\mu;\nu}$, 
$2(\mu_1+\nu_1+\cdots+\mu_k+\nu_k+\mu_{k+1})$ is the
maximum possible dimension of $U/(U\cap U^\perp)$ where $U$ is an
$\F[x]$-submodule of $W$ of the form $\F[x]\{v,w_1,w_2,\cdots,w_{2k+1}\}$ for some
$w_1,\cdots,w_{2k+1}\in W$.
\end{lem}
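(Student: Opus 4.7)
Since $\dim U/(U \cap U^\perp)$ is $K$-invariant, we may compute the maximum at a specific base point $(v,x) \in \cO_{\mu;\nu} \subset V \times \cN$, with $x$ acting on $W = V \oplus V^*$ as $x \oplus x^\bt$. Fix a normal basis $\{v_{ij}\}$ for $(v,x)$ of type $(\mu;\nu)$ with dual basis $\{v_{ij}^*\}$ of $V^*$, and write $V_i = \Span\{v_{ij}\}_j$, $V_i^* = \Span\{v_{ij}^*\}_j$; the subspaces $V_i \oplus V_i^*$ are the symplectic Jordan pairs into which $W$ decomposes.

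For the lower bound, take
\[
w_{2i-1} = v_{i,1}^* \text{ for } 1 \leq i \leq k+1, \qquad w_{2i} = v_{i, \mu_i + \nu_i} \text{ for } 1 \leq i \leq k,
\]
so that $U$ contains the full symplectic subspaces $V_i \oplus V_i^*$ for $1 \leq i \leq k$, the isotropic subspace $V_{k+1}^*$, and $\F[x] v$. A direct computation---generalizing the $k=0$ case, where $U = \F[x] v + V_1^*$ has $U \cap U^\perp = \Span\{v_{1,\mu_1+1}^*,\ldots,v_{1,\mu_1+\nu_1}^*\}$ and $\dim U/(U \cap U^\perp) = 2\mu_1$---verifies that this choice attains $\dim U/(U \cap U^\perp) = 2(\mu_1 + \nu_1 + \cdots + \mu_k + \nu_k + \mu_{k+1})$.

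For the upper bound, self-adjointness of $x$ makes $U^\perp$ an $\F[x]$-submodule of $W$, so $\overline U := U/(U \cap U^\perp)$ is a symplectic $\F[x]$-module with $x$ acting self-adjointly. Such modules have Jordan type of the form $\tau \cup \tau$, giving $\dim \overline U = 2|\tau|$; since $\overline U$ is generated by the $2k+2$ images of $v, w_1, \ldots, w_{2k+1}$, we have $2\ell(\tau) \leq 2k+2$, so $\ell(\tau) \leq k+1$. The image $\bar v$ generates an isotropic $\F[x]$-submodule of $\overline U$ (since $v$ lies in the Lagrangian $V \subseteq W$), and we may extend $\F[x]\bar v$ to a Lagrangian $\F[x]$-submodule $\bar L \subseteq \overline U$ of dimension $|\tau|$, chosen so that $\bar v$ is one of its minimal generators. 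Lifting these generators to $v, \tilde l_1, \ldots, \tilde l_k \in W$, the submodule $L = \F[x]\{v, \tilde l_1, \ldots, \tilde l_k\}$ of $W$ is isotropic (as its image $\bar L$ is isotropic in $\overline U$) and surjects onto $\bar L$, whence $\dim \overline U = 2\dim \bar L \leq 2\dim L$. The upper bound thus reduces to the key inequality
\[
\dim L \leq \mu_1 + \nu_1 + \cdots + \mu_k + \nu_k + \mu_{k+1} \quad (\ast)
\]
for any such isotropic $L$.

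The main obstacle is establishing $(\ast)$. Its right-hand side is exactly the bound of Lemma \ref{lem:jordan-dist}(1) for $\F[x]$-submodules of $V$ with $k+1$ generators including $v$, so $(\ast)$ asserts that isotropy in $W$ is precisely strong enough to reduce the $W$-dimension bound back to the $V$-dimension bound. The natural approach is to analyze the short exact sequence $0 \to L \cap V \to L \to q(L) \to 0$ (where $q : W \to V^*$ is the projection), using that isotropy of $L$ forces $q(L) \subseteq (L \cap V)^\circ$ (the annihilator in $V^*$) and then exploiting the specific structure $v = \sum_i v_{i, \mu_i}$ to control both factors simultaneously. A secondary technical point is the claim that $\bar L$ may be chosen so that $\bar v$ is among its minimal generators: in the exceptional case where $\bar v \in x\overline U$, an adaptive choice of $\bar L$ (coordinating $\bar v$ with a compatible set of $\ell(\tau) \leq k+1$ minimal generators) is needed, and this must be justified carefully.
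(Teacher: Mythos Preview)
Your lower-bound construction is essentially the paper's. The upper bound, however, follows a genuinely different route from the paper, and both of the ``obstacles'' you flag are real gaps that your sketch does not close.

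\textbf{The Lagrangian step.} You need an $x$-stable Lagrangian $\bar L\subseteq\overline U$ containing $\F[x]\bar v$ and generated by $\bar v$ together with at most $k$ further elements. But Lagrangians in a symplectic $\F[x]$-module of type $\tau\cup\tau$ need not have Jordan type $\tau$: for instance $\ker x$ is a Lagrangian of type $(1,1)$ inside a module of type $(2,2)$. Passing to $(\F[x]\bar v)^\perp/\F[x]\bar v$ (of type $\alpha\cup\alpha$, say) only gives $\ell(\alpha)\leq\ell(\tau)\leq k+1$, so a Lagrangian there can require $k+1$ generators, and pulling back yields $k+2$ generators for $\bar L$ --- one too many for $(\ast)$. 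When $\bar v\in x\overline U$ there is no evident fix.

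\textbf{The inequality $(\ast)$.} This is true, but it is not lighter than the lemma itself, and your short-exact-sequence sketch does not reach it: $q(L)\subseteq(L\cap V)^\circ$ only recovers $\dim L\leq n$, and $L\cap V$, being a kernel, need have no bound on its number of generators. A proof of $(\ast)$ that actually works observes that $L/\F[x]v$ is an isotropic $k$-generated submodule of $\widetilde W=(\F[x]v)^\perp/\F[x]v$ (Jordan type $\gamma\cup\gamma$ with $\gamma=(\nu_1+\mu_2,\nu_2+\mu_3,\ldots)$), and then uses the family of forms $\langle\,\cdot\,,x^{j-1}\cdot\,\rangle$ to get $\dim x^{j-1}(L/\F[x]v)\leq\sum_{i\geq j}\gamma_i^\bt$ for all $j$; together with $\ell(\pi)\leq k$ this yields $\dim(L/\F[x]v)\leq\gamma_1+\cdots+\gamma_k$. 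But this is already the paper's idea, executed on $L$ rather than on $U$.

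\textbf{The paper's route.} The paper avoids both issues by passing to $\widetilde W$ \emph{first}: set $\widetilde U=(U\cap(\F[x]v)^\perp)/\F[x]v$ and $\widehat U=\widetilde U/(\widetilde U\cap\widetilde U^\perp)$. Since $U/\F[x]v$ has at most $2k+1$ generators, so does $\widehat U$; but $\widehat U$ is nondegenerate symplectic, so its Jordan type is $\pi\cup\pi$, forcing $2\ell(\pi)\leq 2k+1$ and hence $\ell(\pi)\leq k$ (the parity trick). As $\widehat U$ is a subquotient of $\widetilde W$, the Young diagram of $\pi\cup\pi$ sits inside that of $\gamma\cup\gamma$, giving $|\pi|\leq\gamma_1+\cdots+\gamma_k$. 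A short computation then shows $\dim U/(U\cap U^\perp)\leq\dim\widehat U+2\mu_1$. No Lagrangian needs to be chosen or lifted.
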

\begin{proof}
By $K$-equivariance,
we can assume that $(v,x)\in\cO_{\mu;\nu}$. 
As in the previous proof,
let $\{v_{ij}\}$ be a normal basis of $V$ for $(v,x)$, and let $\{v_{ij}^*\}$
be the dual basis of $V^*$.
We can easily see that the stated dimension is attained: set
\[
\begin{split} 
U_0&=\F[x]\{v,v_{1,\mu_1+\nu_1},\cdots,v_{k,\mu_k+\nu_k},v_{11}^*,\cdots,v_{k1}^*,
v_{k+1,1}^*\}\\
&=\Span\{v_{ij}\,|\,1\leq i\leq k,\,1\leq j\leq\mu_i+\nu_i\}
\oplus\F[x]\sum_{i=k+1}^{\ell(\mu)}v_{i,\mu_i}\\
&\qquad \oplus\Span\{v_{ij}^*\,|\,1\leq i\leq k+1,\,1\leq j\leq\mu_i+\nu_i\}.
\end{split}
\]
(If $k\geq\ell(\mu)$, ignore the middle summand; and if $k\geq\ell(\mu+\nu)$,
interpret $v_{ij}$ and $v_{ij}^*$ as zero for $i>\ell(\mu+\nu)$.) We have
$U_0\cap U_0^\perp=\Span\{v_{k+1,j}^*\,|\,j>\mu_{k+1}\}$, so 
$\dim U_0/(U_0\cap U_0^\perp)=2(\mu_1+\nu_1+\cdots+\mu_k+\nu_k+\mu_{k+1})$ as required.

We now show that for any $U=\F[x]\{v,w_1,w_2,\cdots,w_{2k+1}\}$, the dimension
of $U/(U\cap U^\perp)$ has the claimed upper bound. Notice first that $\F[x]v$ is
an isotropic subspace of $W$, being contained in the maximal isotropic subspace $V$.
The form 
$\langle\cdot,\cdot\rangle$ on $W$ induces a nondegenerate skew-symmetric form on
the subquotient $\widetilde{W}=(\F[x]v)^\perp/\F[x]v$, and $x$ induces a self-adjoint
nilpotent endomorphism of $\widetilde{W}$. Since $V/\F[x]v$ is a maximal
isotropic subspace of $\widetilde{W}$ with an $x$-stable complementary 
isotropic subspace, 
Lemma \ref{lem:jordan-dist} implies that the Jordan type of
$x$ on $\widetilde{W}$ is
\[ (\nu_1+\mu_2,\nu_1+\mu_2,\nu_2+\mu_3,\nu_2+\mu_3,\cdots). \]
Let $\widetilde{U}=(U\cap(\F[x]v)^\perp)/\F[x]v$, which is an $x$-stable subspace
of $\widetilde{W}$. Since $U/\F[x]v$ is generated as an $\F[x]$-module by the images
of $w_1,\cdots,w_{2k+1}$, $x$ has at most $2k+1$ Jordan blocks on 
$U/\F[x]v$, and hence also has at most $2k+1$ Jordan blocks on $\widetilde{U}$ and on
$\widehat{U}=\widetilde{U}/(\widetilde{U}\cap\widetilde{U}^\perp)$.
But the induced skew-symmetric form on 
$\widehat{U}$ is again nondegenerate,
so the Jordan type of $x$ on $\widehat{U}$ must be of the form $\pi\cup\pi$,
where $\ell(\pi)\leq k$. Moreover, since
the $\F[x]$-module
$\widehat{U}$ is a subquotient of $\widetilde{W}$, the Young diagram of $\pi\cup\pi$
must be contained in that of $(\nu_1+\mu_2,\nu_1+\mu_2,\cdots)$. So
\begin{equation} \label{eqn:uno}
\dim\widehat{U}=2|\pi|\leq 2(\nu_1+\mu_2+\cdots+\nu_k+\mu_{k+1}).
\end{equation} 
To relate this to $\dim U/(U\cap U^\perp)$, notice that
\[
\widetilde{U}\cap\widetilde{U}^\perp=
\frac{U\cap(\F[x]v)^\perp\cap(U^\perp+\F[x]v)}{\F[x]v}
=\frac{(U\cap U^\perp)+\F[x]v}{\F[x]v},
\]
where we have used the inclusions $\F[x]v\subseteq U\cap(\F[x]v)^\perp$ and
$U^\perp\subseteq(\F[x]v)^\perp$. So
\begin{equation} \label{eqn:due}
\dim\widehat{U}=\dim\frac{U\cap(\F[x]v)^\perp}{(U\cap U^\perp)+\F[x]v}
\geq\dim\frac{U}{U\cap U^\perp}-2\mu_1,
\end{equation}
since $\dim\F[x]v=\codim\,(\F[x]v)^\perp=\mu_1$.
Combining \eqref{eqn:uno} and \eqref{eqn:due}, we get the desired upper bound.
\end{proof}

\begin{thm} \label{thm:moby-dick}
For $(\rho;\sigma),(\mu;\nu)\in\cQ_n$, 
$\Oo_{\rho;\sigma}\subseteq\overline{\Oo_{\mu;\nu}}$ 
if and only if $(\rho;\sigma)\leq(\mu;\nu)$.
\end{thm}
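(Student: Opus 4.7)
The ``if'' direction is quick. Given $(\rho;\sigma)\leq(\mu;\nu)$, Theorem \ref{thm:closure} gives $\cO_{\rho;\sigma}\subseteq\overline{\cO_{\mu;\nu}}$ inside $V\times\cN\subseteq\fN$; since $\overline{\Oo_{\mu;\nu}}$ is a $K$-stable closed subset of $\fN$ containing $\cO_{\mu;\nu}$, it also contains $\cO_{\rho;\sigma}$, and hence its $K$-saturation $\Oo_{\rho;\sigma}=K\cdot\cO_{\rho;\sigma}$ (the equality following from Theorem \ref{thm:kato}).

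For the converse, I would handle the two families of inequalities in Definition \ref{defn:ineq} separately. The first family is equivalent to $\rho+\sigma\leq\mu+\nu$, which follows by projecting $\Oo_{\rho;\sigma}\subseteq\overline{\Oo_{\mu;\nu}}$ to the nilpotent cone $\cN(W)$: by Theorem \ref{thm:kato}, every point of $\Oo_{\mu;\nu}$ has endomorphism component of Jordan type $(\mu+\nu)\cup(\mu+\nu)$, so the closure relation forces $(\rho+\sigma)\cup(\rho+\sigma)\leq(\mu+\nu)\cup(\mu+\nu)$ in the dominance order, which is equivalent to $\rho+\sigma\leq\mu+\nu$.

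The second family,
\[
\rho_1+\sigma_1+\cdots+\rho_k+\sigma_k+\rho_{k+1}\leq\mu_1+\nu_1+\cdots+\mu_k+\nu_k+\mu_{k+1},\quad k\geq 0,
\]
is the heart of the matter, and Lemma \ref{lem:symp} is tailor-made for it. For each $k\geq 0$ set
\[
\phi_k(v,x)=\max\bigl\{\dim U/(U\cap U^\perp)\,:\,U=\F[x]\{v,w_1,\ldots,w_{2k+1}\},\ w_i\in W\bigr\};
\]
by Lemma \ref{lem:symp}, $\phi_k$ is the constant $2(\mu_1+\nu_1+\cdots+\mu_k+\nu_k+\mu_{k+1})$ on $\Oo_{\mu;\nu}$. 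My plan is to show that $\{\phi_k\leq N\}$ is closed in $\fN$ for every integer $N$, for then $\Oo_{\rho;\sigma}\subseteq\overline{\Oo_{\mu;\nu}}$ forces the value of $\phi_k$ on $\Oo_{\rho;\sigma}$ to be at most its value on $\Oo_{\mu;\nu}$, which is exactly the second family of inequalities. To establish closedness, observe that $U$ is always spanned by the universal set $\{x^j v,\,x^j w_i\}_{0\leq j<2n,\,1\leq i\leq 2k+1}$, and a standard calculation identifies $\dim U/(U\cap U^\perp)$ with the rank of the Gram matrix $(\langle\xi_a,\xi_b\rangle)$ of this spanning set. Thus $\phi_k(v,x)\leq N$ amounts to the vanishing, for every $w_1,\ldots,w_{2k+1}\in W$, of every $(N+1)\times(N+1)$ minor of this Gram matrix. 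As in the proof of Theorem \ref{thm:closure}, the ``for every $w$'' quantifier is eliminated by collecting the coefficients of these minors, viewed as polynomials in the coordinates of the $w_i$, to produce a finite family of polynomial equations in the coordinates of $(v,x)$.

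The main obstacle is precisely this semicontinuity step: one needs the Gram-matrix-rank formula for $\dim U/(U\cap U^\perp)$ with $U$ presented through a universal (typically redundant) spanning set, together with the uniform-in-$w$ coefficient-extraction trick. Once that is in hand, the theorem falls out immediately.
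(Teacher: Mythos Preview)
Your proposal is correct and follows essentially the same route as the paper: the ``if'' direction via $K$-saturation of Theorem~\ref{thm:closure}, the first family of inequalities by passing to the Jordan type $(\mu+\nu)\cup(\mu+\nu)$ of the nilpotent component, and the second family via Lemma~\ref{lem:symp} together with the Gram-matrix rank argument and the coefficient-extraction trick from the proof of Theorem~\ref{thm:closure}. The only cosmetic difference is that the paper uses powers $x^0,\ldots,x^{n-1}$ (since every $x\in\fN_0$ has Jordan type of the form $\lambda\cup\lambda$ with $\lambda\in\cP_n$, so $x^n=0$), whereas you take powers up to $x^{2n-1}$; this is harmless but unnecessary.
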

\begin{proof}
It is clear that $K.\overline{\cO_{\mu;\nu}}\subseteq\overline{\Oo_{\mu;\nu}}$,
so the ``if'' direction is a consequence of the ``if'' direction in Theorem
\ref{thm:closure}. For the ``only if'' direction, 
it is clear that $\GL(W).\overline{\Oo_{\mu;\nu}}\subseteq
\overline{\cO_{\mu\cup\mu;\nu\cup\nu}}$, so we have
\[ \Oo_{\rho;\sigma}\subseteq\overline{\Oo_{\mu;\nu}}\Rightarrow
(\rho\cup\rho;\sigma\cup\sigma)\leq(\mu\cup\mu;\nu\cup\nu). \]
The latter condition does not imply $(\rho;\sigma)\leq(\mu;\nu)$,
but at least it does imply $\rho+\sigma\leq\mu+\nu$, which leaves only the
inequalities
\[ \rho_1+\sigma_1+\cdots+\rho_k+\sigma_k+\rho_{k+1}
\leq \mu_1+\nu_1+\cdots+\mu_k+\nu_k+\mu_{k+1}, \]
for all $k\geq 0$. By Lemma \ref{lem:symp}, we need to prove that for fixed $N$,
the condition
\begin{equation} \label{eqn:otherdimcond}
\begin{split}
\dim U/(U\cap U^\perp)\leq N&\text{ where }U=\F[x]\{v,w_1,\cdots,w_{2k+1}\},\\
&\text{for any $w_1,\cdots,w_{2k+1}\in W$}
\end{split}
\end{equation}
is a closed condition on $(v,x)$ (\textit{i.e.}, it determines a closed subvariety
of $\fN$).
But $\F[x]\{v,w_1,\cdots,w_{2k+1}\}$ is guaranteed to be spanned by the
$(2k+2)n$ vectors
\[ v,xv,x^2v,\cdots,x^{n-1}v,w_1,xw_1,\cdots,x^{n-1}w_1,\cdots,w_{2k+1},xw_{2k+1},
\cdots,x^{n-1}w_{2k+1}, \]
and the dimension involved in \eqref{eqn:otherdimcond} is the rank of the
$(2k+2)n\times (2k+2)n$ matrix formed by using these vectors as the left
and right inputs of $\langle\cdot,\cdot\rangle$. So as in the proof of
Theorem \ref{thm:closure}, the condition \eqref{eqn:otherdimcond} amounts to
a collection of polynomial equations in the coordinates of $v$, $x$, and
$w_1,\cdots,w_{2k+1}$, and we are done.
\end{proof}

The closures $\overline{\Oo_{\varnothing;\lambda}}$ are known to have
the same intersection cohomology as the ordinary nilpotent orbit closures
$\overline{\cO_{\varnothing;\lambda}}$, but with all degrees doubled; a proof with
a gap was given in
\cite{grojnowski}, and the gap was filled in \cite{hend:ft}. On the evidence of
direct calculations for $n\leq 3$, we conjecture that the same holds for
all $(\mu;\nu)$. In view of Theorem \ref{thm:main}, this is equivalent to
the following.
\begin{conj} \label{conj:doubling}
\begin{enumerate}
\item For $(\mu;\nu)\in\cQ_n$,
$\cH^i\IC(\overline{\Oo_{\mu;\nu}},\Qlb)=0$ for $4\nmid i$.
\item
For $(\rho;\sigma),(\mu;\nu)\in\cQ_n$ and $(v,x)\in\Oo_{\rho;\sigma}$,
\[ 
t^{b(\mu;\nu)}\sum_i \dim \cH^{4i}_{(v,x)}\IC(\overline{\Oo_{\mu;\nu}},\Qlb)\,t^{2i}
=\tK_{(\mu;\nu),(\rho;\sigma)}(t).
\]
\end{enumerate}
\end{conj}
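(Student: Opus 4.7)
The conjecture is the direct bipartition analogue of the comparison between $\IC(\overline{\Oo_{\varnothing;\lambda}})$ and $\IC(\overline{\cO_\lambda})$ achieved in \cite{grojnowski} and \cite{hend:ft}, so the natural plan is to adapt those methods to the enhanced/exotic pair studied here. I would aim to build a semismall resolution of $\overline{\Oo_{\mu;\nu}}$ whose fibres are paved by affine spaces with Poincar\'e polynomial in $t$ matching that of $\pi_{\mu;\nu}^{-1}(v,x)$ in the variable $t^2$, and then apply the decomposition theorem together with Theorem \ref{thm:main} to identify the local intersection cohomology stalks.

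To construct the resolution, let $\mathfrak{F}^K_{\mu;\nu}$ denote the variety of partial flags
\[ 0 = U_0 \subset U_1 \subset \cdots \subset U_{\mu_1+\nu_1} \subset \cdots \subset U_{2(\mu_1+\nu_1)} = W \]
in which $U_{\mu_1+\nu_1}$ is maximal isotropic, $U_{2(\mu_1+\nu_1)-k} = U_k^\perp$, and the successive dimensions of $U_1, \ldots, U_{\mu_1+\nu_1}$ encode the composition \eqref{eqn:composition} in a manner compatible with the symplectic pairing. Define
\[ \widetilde{\mathfrak{F}^K_{\mu;\nu}} = \{(v, x, (U_k)) \in W \times \fN_0 \times \mathfrak{F}^K_{\mu;\nu} \mid v \in U_{\mu_1+\nu_1},\ x(U_k) \subseteq U_{k-1}\} \]
and let $\xi_{\mu;\nu}: \widetilde{\mathfrak{F}^K_{\mu;\nu}} \to W \times \fN_0$ be the projection. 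Following the pattern of Proposition \ref{prop:resolution}, combined with Kato's analogous constructions for the full exotic Steinberg variety, $\xi_{\mu;\nu}$ should be a resolution of singularities of $\overline{\Oo_{\mu;\nu}}$.

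Next I would establish three facts: (a) $\xi_{\mu;\nu}$ is semismall, by bounding fibre dimensions in the style of Proposition \ref{prop:kostka}; (b) each fibre $\xi_{\mu;\nu}^{-1}(v,x)$ admits an affine paving, imitating Theorem \ref{thm:paving} but using the maximal parabolic of $K$ stabilizing $\F[x]v$ together with its perpendicular; and (c) the crucial numerical identity, for $(v,x) \in \Oo_{\rho;\sigma}$,
\[ \sum_i \dim H^{2i}(\xi_{\mu;\nu}^{-1}(v,x), \Qlb)\,t^i = \Pi_{\mu;\nu}^{\rho;\sigma}(t^2), \]
with all odd-degree and non-multiple-of-$4$ cohomology vanishing. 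Given (a)--(c), the arguments of Theorem \ref{thm:semismall} and Corollary \ref{cor:oddzero} transfer verbatim, yielding
\[ R(\xi_{\mu;\nu})_*\Qlb[\dim \Oo_{\mu;\nu}] \cong \bigoplus_{\substack{\rho \leq \mu \\ \sigma \leq \nu}} K_{\rho^\bt \mu^\bt} K_{\sigma^\bt \nu^\bt}\,\IC(\overline{\Oo_{\rho;\sigma}},\Qlb)[\dim \Oo_{\rho;\sigma}], \]
and the conjecture then drops out by extracting stalks and invoking Theorem \ref{thm:main}.

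The main obstacle is step (c): one must show that the dimensions of the affine cells in a paving of $\xi_{\mu;\nu}^{-1}(v,x)$ are exactly twice those of a corresponding paving of $\pi_{\mu;\nu}^{-1}(v',x')$ for a matching $(v',x') \in \cO_{\rho;\sigma}$. A naive attempt to import the paving by restricting the exotic flag variety to $\cF_{\mu;\nu}$ fails because the relevant orbit structures for the stabilizer parabolics differ. An attractive alternative, exploiting Kato's identity $\fN = K \cdot (V \times \cN)$, is to produce at each $(v,x)$ a transverse slice in $\overline{\Oo_{\mu;\nu}}$ smoothly equivalent to a product of two transverse slices to $\cO_{\rho;\sigma}$ in $\overline{\cO_{\mu;\nu}}$; such a ``square-of-slice'' structure would immediately force the degree-doubling of local IC and bypass the direct paving comparison, in parallel with the approach taken in \cite{hend:ft} for the $\varnothing;\lambda$ case. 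Either route requires careful bookkeeping of how $K$-orbit structures on isotropic partial flag varieties specialize under the embedding $V \times \cN \hookrightarrow \fN$.
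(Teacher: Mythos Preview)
The statement you are addressing is Conjecture~\ref{conj:doubling}, and the paper does \emph{not} prove it. After stating the conjecture, the authors sketch (with acknowledged gaps) an argument that it is \emph{equivalent} to Shoji's Conjecture~\ref{conj:shoji}: Step~1 would use Kato's exotic Springer correspondence to express $\dim\cH^i_{(v,x)}\IC(\overline{\Oo_{\mu;\nu}},\Qlb)$ as a multiplicity of $\chi^{\mu;\nu}$ in the cohomology of an exotic Springer fibre $\cC_{(v,x)}$, and Step~2 would identify that cohomology with the algebraic graded $W_n$-module $R_\bullet^{\rho;\sigma}$. Both steps are stated as expectations, not theorems. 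The paper also mentions a second possible line of attack via a Lusztig-style inner product formula~\eqref{eqn:exinnerprod}, again without proof.

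Your proposal is therefore not a variant of the paper's proof but a third candidate strategy, and you are appropriately candid that it is incomplete. A few specific concerns beyond the obstacle you already flag at step~(c). First, the condition $v\in U_{\mu_1+\nu_1}$ in your definition of $\widetilde{\mathfrak{F}^K_{\mu;\nu}}$ places $v$ in a maximal isotropic subspace of dimension $n$, independent of the split between $\mu$ and $\nu$; the enhanced analogue requires $v\in V_{\mu_1}$ with $\dim V_{\mu_1}=|\mu|$, so your map $\xi_{\mu;\nu}$ as written is unlikely to have image $\overline{\Oo_{\mu;\nu}}$ or the correct dimension. Second, even with the right flag condition, semismallness in the exotic setting is not a formality: the argument of Proposition~\ref{prop:kostka} relies on the dimension formula $\dim\cO_{\mu;\nu}=n^2-b(\mu;\nu)$ and on Spaltenstein's equidimensionality theorem for type~A Springer fibres, and the exotic analogues of both inputs would need to be established. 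Third, your alternative ``square-of-slice'' idea is attractive, but note that the mechanism in \cite{hend:ft} for $\fN_0$ versus $\cN$ goes through Fourier transform and parabolic induction rather than a literal product-of-slices statement, so a direct transverse-slice argument would be genuinely new even in that special case.
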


We now sketch a possible argument to show that this conjecture is
equivalent to a recent conjecture of Shoji, stated below.

\textbf{Step 1.}
It follows from the properties of the usual Springer correspondence in type
A that for $\lambda\in\cP_n$, $x\in\cN$, and $i\geq 0$,
\begin{equation}
\dim\cH^{i}_x\IC(\overline{\cO_\lambda},\Qlb)
=\langle H^{i+2n(\lambda)}(\cB_x,\Qlb), \chi^\lambda\rangle_{S_n},
\end{equation} 
where $\cB_x$ denotes the Springer fibre ($\psi_{\varnothing;(n)}^{-1}(x)$
in the notation used before), on whose cohomology
$S_n$ acts via the Springer representation. Since $\cB_x$ has an affine paving,
both sides vanish if $i$ is odd.
Analogously, one may hope to deduce from Kato's exotic Springer correspondence in type
C that for $(\mu;\nu)\in\cQ_n$, $(v,x)\in\fN$, and $i\geq 0$,
\begin{equation} \label{eqn:step1}
\dim \cH^{i}_{(v,x)}\IC(\overline{\Oo_{\mu;\nu}},\Qlb)
=\langle H^{i+2b(\mu;\nu)}(\cC_{(v,x)},\Qlb), \chi^{\mu;\nu}\rangle_{W_n},
\end{equation}
where $\cC_{(v,x)}$ is Kato's analogue of the Springer fibre. It is also
expected that $\cC_{(v,x)}$ has an affine paving, so that both sides
would vanish if $i$ is odd.

\textbf{Step 2.}
The Springer representations in type A are isomorphic to representations 
defined purely algebraically. Explicitly, consider the graded $S_n$-module
$R_\bullet^\pi=\Qlb[x_1,\cdots,x_n]/I^\pi$,
where $I^\pi$ is the ideal of all polynomials $p(x_1,\cdots,x_n)$ such that
$p(\frac{\partial}{\partial x_1},\cdots,\frac{\partial}{\partial x_n})$
annihilates the Specht module $V^\pi$, realized in the usual way as
a subspace of the homogeneous component $\Qlb[x_1,\cdots,x_n]_{n(\pi)}$. 
It follows from~\cite{dp} that for $x\in\cO_\pi$ and for each $i$,
there is an isomorphism of $S_n$-modules
\begin{equation}
H^{2i}(\cB_x, \Qlb) \cong R_i^\pi.
\end{equation}
Analogously, one may expect that for $(v,x)\in\Oo_{\rho;\sigma}$ and for each $i$,
there is an isomorphism of $W_n$-modules
\begin{equation}\label{eqn:step2}
H^{2i}(\cC_{(v,x)},\Qlb) \cong R_i^{\rho;\sigma},
\end{equation}
where $R_\bullet^{\rho;\sigma}$ is associated in the
same way
to the Specht module $V^{\rho;\sigma}$,
realized via Macdonald--Lusztig--Spaltenstein induction (see \cite{gp}) as 
a subspace of the homogeneous component $\Qlb[x_1,\cdots,x_n]_{b(\rho;\sigma)}$.

Assuming Steps 1 and 2 can be carried out, we see
(using Corollary~\ref{cor:shoji}(1)) that
Conjecture~\ref{conj:doubling} is equivalent to the following statement:

\begin{conj}[{Shoji~\cite[3.13]{shoji:limit}}]\label{conj:shoji}
For $(\rho;\sigma),(\mu;\nu)\in\cQ_n$,
\begin{equation*} \label{eqn:step3}
\sum_i \langle R_i^{\rho;\sigma}, \chi^{\mu;\nu}\rangle_{W_n}\, t^i
=\tK_{(\mu;\nu),(\rho;\sigma)}(t).
\end{equation*}
\end{conj}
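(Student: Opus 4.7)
The plan is to prove Shoji's conjecture by carrying out the two-step program sketched by the authors at the end of the paper, while independently establishing Conjecture~\ref{conj:doubling} by a direct geometric computation on the exotic nilpotent cone. Together with Theorem~\ref{thm:main} and the parity statement Corollary~\ref{cor:shoji}(1), these ingredients yield the identity in Conjecture~\ref{conj:shoji}.

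First I would carry out Step~1 by setting up Kato's exotic Springer correspondence in a semismall framework analogous to Theorem~\ref{thm:semismall}. One constructs a $K$-equivariant resolution $\varpi\colon\widetilde{\fN}\to\fN$ from a variety of complete exotic flags carrying a natural $W_n$-action, proves semismallness using a dimension estimate on fibres patterned after Proposition~\ref{prop:kostka}, and extracts from the decomposition theorem an isomorphism
\begin{equation*}
R\varpi_*\Qlb[\dim\fN]\cong\bigoplus_{(\mu;\nu)\in\cQ_n} V^{\mu;\nu}\otimes\IC(\overline{\Oo_{\mu;\nu}},\Qlb)[\dim\Oo_{\mu;\nu}].
\end{equation*}
Taking stalks at $(v,x)\in\Oo_{\rho;\sigma}$ and extracting $\chi^{\mu;\nu}$-isotypic components then yields~(\ref{eqn:step1}). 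Along the way I would construct an affine paving of the exotic Springer fibre $\cC_{(v,x)}$ in direct analogy with Theorem~\ref{thm:paving}, with a parity refinement on cell dimensions sufficient to force vanishing of $\cH^i\IC(\overline{\Oo_{\mu;\nu}},\Qlb)$ for $4\nmid i$, giving part~(1) of Conjecture~\ref{conj:doubling}.

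Step~2 calls for an exotic De Concini--Procesi theorem identifying $H^{2i}(\cC_{(v,x)},\Qlb)$ with $R^{\rho;\sigma}_i$ as graded $W_n$-modules. I would embed $\cC_{(v,x)}$ into a symplectic partial flag variety whose total cohomology is the $W_n$-coinvariant algebra, and identify the restriction ideal with a symplectic Tanisaki-type ideal whose quotient is the Specht module $V^{\rho;\sigma}$ viewed inside $\Qlb[x_1,\ldots,x_n]_{b(\rho;\sigma)}$ via Macdonald--Lusztig--Spaltenstein $j$-induction. The identification should be verified by induction along the covering relations of Lemma~\ref{lem:covering}, matching each of the four types of covering moves with a controlled operation on the ideals. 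To prove Conjecture~\ref{conj:doubling} itself (it is only \emph{equivalent} to Shoji's conjecture given Steps~1 and~2, not a formal consequence of them), I would construct exotic analogues of the resolutions $\pi_{\mu;\nu}$ of Section~\ref{sect:closures} using symplectic partial flags, prove semismallness and affine paving in the spirit of Sections~\ref{sect:fibres}--\ref{sect:kostka}, and compute the IC stalks directly by the exotic version of Proposition~\ref{prop:kostka}.

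The main obstacle is the exotic De Concini--Procesi presentation in Step~2: the grading on $R^{\rho;\sigma}_\bullet$, controlled by the invariant $b(\mu;\nu)=2n(\mu)+2n(\nu)+|\nu|$, must emerge cleanly from the geometric grading on $H^*(\cC_{(v,x)})$, but $j$-induction in type~C does not admit the clean degree-counting available in type~A, and the recursion along covering relations has four qualitatively different cases rather than one. Once Steps~1 and~2 are in place and Conjecture~\ref{conj:doubling} is proved, the left-hand side of Shoji's conjecture rewrites via Steps~1, Step~2 and Corollary~\ref{cor:shoji}(1) as $t^{b(\mu;\nu)}\sum_i\dim\cH^{4i}_{(v,x)}\IC(\overline{\Oo_{\mu;\nu}},\Qlb)\,t^{2i}$, and Conjecture~\ref{conj:doubling}(2) identifies this with $\tK_{(\mu;\nu),(\rho;\sigma)}(t)$, completing the proof.
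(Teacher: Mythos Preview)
The statement you are attempting to prove is labeled a \emph{Conjecture} in the paper, not a theorem: the authors do not prove it, and there is no proof in the paper to compare your attempt against. What the paper does is sketch an argument (their Steps~1 and~2) that Shoji's conjecture would be \emph{equivalent} to their Conjecture~\ref{conj:doubling}, using phrases like ``one may hope to deduce'' and ``one may expect''; they then explicitly write ``It is not clear which of these putatively equivalent conjectures would be easier to prove.'' Your proposal is thus a research program for an open problem, not a proof that can be checked against the paper's own.

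Your program follows the authors' outline faithfully and you correctly recognize that Steps~1 and~2 alone only give equivalence, so a separate proof of Conjecture~\ref{conj:doubling} is needed. But each ingredient you list is itself a substantial open problem, and your sketch does not close any of them. In particular: the authors remark in the introduction that ``the great advantage of the enhanced nilpotent cone over the exotic is that there is a standard way to construct resolutions of singularities of the orbit closures,'' which is precisely why the methods of Sections~\ref{sect:closures}--\ref{sect:kostka} do not transfer directly to~$\fN$; your proposed ``exotic analogues of the resolutions $\pi_{\mu;\nu}$ using symplectic partial flags'' do not exist in the literature and you give no construction. The affine paving of $\cC_{(v,x)}$ with the mod-$4$ parity needed for Conjecture~\ref{conj:doubling}(1) is likewise not established. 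Finally, you yourself flag the exotic De~Concini--Procesi presentation as the main obstacle, and the proposed induction along the covering relations of Lemma~\ref{lem:covering} is only a shape of an argument, not a mechanism: no description is given of how the Tanisaki-type ideal changes under any of the four covering moves. As written, the proposal is an accurate summary of what one would \emph{like} to do, but it does not contain a proof.
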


It is not clear which of these putatively equivalent conjectures would
be easier to prove.  Garsia and Procesi (see
\cite[(I.8)]{garsiaprocesi})
have given a purely algebraic/combinatorial proof that
\begin{equation}
\sum_i \langle R_i^\pi, \chi^\lambda\rangle_{S_n}\, t^i=\tK_{\lambda\pi}(t).
\end{equation}
Possibly their arguments can be adapted to prove
Conjecture~\ref{conj:shoji}.
Alternatively, one might tackle Conjecture~\ref{conj:doubling} by
imitating Lusztig's study of intersection cohomology in~\cite{lcs5}.  In
{\it loc.~cit.}, Lusztig defines a certain inner product on intersection
cohomology complexes, and then computes this inner product with the aid of
orthogonality relations for character sheaves.  An analogous inner product
for $\fN$ is defined by
\begin{multline*}
\langle \IC(\overline{\Oo_{\mu;\nu}}, \Qlb),
\IC(\overline{\Oo_{\mu';\nu'}}, \Qlb) \rangle_q\\
 = \sum_{\substack{i,j \in \Z \\ z \in \fN(\F_q)}} (-1)^{i+j} \tr(F |
\cH^i_z\IC(\overline{\Oo_{\mu;\nu}},\Qlb)) \tr(F |
\cH^i_z\IC(\overline{\Oo_{\mu';\nu'}},\Qlb)),
\end{multline*}
and the desired formula is
\begin{equation}\label{eqn:exinnerprod}
\langle \IC(\overline{\Oo_{\mu;\nu}}, \Qlb),
\IC(\overline{\Oo_{\mu';\nu'}},\Qlb) \rangle_q =
q^{-(b(\mu;\nu)+b(\mu';\nu'))}
\omega_{(\mu;\nu),(\mu';\nu')}(q).
\end{equation}
By the same uniqueness argument as in the proof of Theorem~\ref{thm:ls},
equation~\eqref{eqn:exinnerprod} would imply Conjecture~\ref{conj:doubling},
and one could also deduce as a by-product that
$|\Oo_{\mu;\nu}(\F_q)| = \theta_{\mu;\nu}(q^2)=|\cO_{\mu;\nu}(\F_{q^2})|$.

\end{document}